\documentclass[bj,preprint]{imsart}
\usepackage[utf8]{inputenc}
\usepackage[T1]{fontenc}
\usepackage[english]{babel}

\arxiv{arXiv:1703.09433}

\usepackage{amsmath,xcolor,color,dsfont,verbatim}
\usepackage{amsfonts}
\usepackage{amssymb,amsthm}

\colorlet{darkgreen}{green!50!black}

\usepackage{hyperref} 
\hypersetup{	
colorlinks=true,  
breaklinks=true,  
urlcolor= blue, 
linkcolor= blue,	
citecolor=darkgreen,	
pdftitle={Integral expression for the stationary distribution of reflected Brownian motion in a wedge}, 
pdfauthor={Sandro Franceschi and Kilian Raschel},
}

\newtheorem{thm}{Theorem}
\newtheorem{prop}[thm]{Proposition}
\newtheorem{cor}[thm]{Corollary}

\newtheorem{lem}[thm]{Lemma}
\newtheorem{rem}[thm]{Remark}

\renewcommand{\geq}{\geqslant}
\renewcommand{\leq}{\leqslant}

\usepackage{epsfig}

\usepackage{geometry}
\geometry{hmargin=2.5cm,vmargin=3cm}
\usepackage{enumitem}
\usepackage{mathtools}
\usepackage{eqnarray,cases}
\usepackage{multirow}

\usepackage{graphicx}
\usepackage{subcaption}

\usepackage{indentfirst}

\usepackage{pstricks-add}
\usepackage[numbers]{natbib}

\newcommand{\citeep}[1]{\citeeauthor*{#1} (\citeeyear{#1}) \citeep{#1}}

\usepackage{float}

\DeclareMathOperator{\sgn}{sgn}

\newcommand{\G}{\mathcal G_\mathcal R}
\newcommand{\R}{\mathcal{R}}

\let\phi=\varphi
\let\epsilon=\varepsilon

\begin{document}

\begin{frontmatter}

\title{Integral expression for the stationary distribution of reflected Brownian motion in a wedge}
\runtitle{Integral expression for the stationary distribution of reflected Brownian motion in a wedge}

\begin{aug}
  \author{\fnms{Sandro}  \snm{Franceschi}\thanksref{a,e1}\ead[label=e1,mark]{sandro.franceschi@sorbonne-universite.fr}}
  \and
  \author{\fnms{Kilian} \snm{Raschel}\thanksref{b,e2}\ead[label=e2,mark]{raschel@math.cnrs.fr}}

  \runauthor{S. Franceschi and K. Raschel}

  \affiliation{Some University and Another University}

  \address[a]{Laboratoire de Probabilit\'es, Statistique et Mod\'elisation, Sorbonne Universit\'e,
        4 Place Jussieu, 75252 Paris Cedex~05, France. \printead{e1}}

  \address[b]{CNRS and Institut Denis Poisson, Universit\'e de Tours and Universit\'e d'Orl\'eans, Parc de Grandmont, 37200 Tours, France. \printead{e2}}

\end{aug}

\begin{abstract}\
For Brownian motion in a (two-dimensional) wedge with negative drift and oblique reflection on the axes, we derive an explicit formula for the Laplace transform of its stationary distribution (when it exists), in terms of Cauchy integrals and generalized Chebyshev polynomials. To that purpose we solve a Carleman-type boundary value problem on a hyperbola, satisfied by the Laplace transforms of the boundary stationary distribution.
\end{abstract}

\begin{keyword}
\kwd{Reflected Brownian motion in a wedge}
\kwd{Stationary distribution}
\kwd{Laplace transform}
\kwd{Carleman-type boundary value problem}
\kwd{Boundary value problem with shift}
\kwd{Conformal mapping}
\end{keyword}

\end{frontmatter}



\begin{figure}[ht]
\centering
\vspace{-5mm}
\includegraphics[scale=0.75]{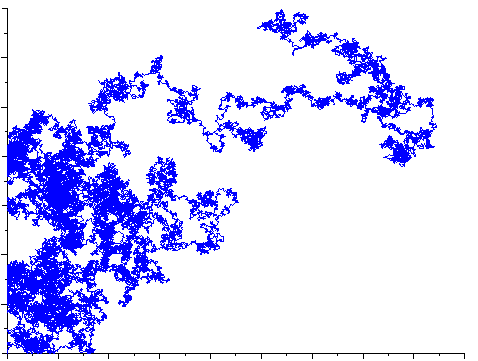}
\caption{An example of path of reflected Brownian motion in the quadrant}
\end{figure}

\section{Introduction and main results}
\label{sec:introduction}

Since its introduction in the eighties by Harrison, Reiman, Varadhan and Williams \cite{HaRe-81,HaRe-81b,varadhan_brownian_1985,Williams-85b,Williams-85}, reflected Brownian motion in the quarter plane has received a lot of attention from the probabilistic community. However, and surprisingly, finding a general explicit expression of the stationary distribution has been left as an open problem. The present paper solves this problem in a complete and unified way.

\subsection*{Reflected Brownian motion in two-dimensional cones}
The semimartingale reflected Brownian motion with drift in the quarter plane $\mathbb{R}_+^2 :=[0,\infty)^2$ (or equivalently in arbitrary convex wedges, by performing a simple linear transformation, cf.\ Appendix \ref{app:BM_cones}) can be written as
\begin{equation}
\label{eq:RBMQP}
     Z(t)=Z_0 + B(t) + \mu\cdot t + R\cdot L(t),\qquad \forall t\geq 0,
\end{equation}
where  
\begin{itemize}
     \item $Z_0$ is any initial point in the quadrant,
     \item $B$ is a Brownian motion with covariance $\Sigma=\small\left(  \begin{array}{ll} \sigma_{11} & \sigma_{12} \\ \sigma_{12} & \sigma_{22} \end{array} \right)$ starting from the origin,
     \item $\mu= \small\left(  \begin{array}{l} \mu_1 \\  \mu_2  \end{array} \right)$ denotes the interior drift,
     \item  $R=(R^1,R^2) =\small \left(  \begin{array}{cc} r_{11} & r_{12} \\ r_{21} & r_{22} \end{array} \right)$ is the reflection matrix,
     \item $L= \small\left(  \begin{array}{l} L^1 \\  L^2\end{array} \right)$ is the local time. 
\end{itemize}
For $i\in\{1,2\}$, the local time $L^i(t)$ is a continuous non-decreasing process starting from $0$ (i.e., $L^i(0)=0$), increasing only at time $t$ such that $Z^i(t)=0$, viz., $\int_{0}^t \mathds{1}_{\{Z^i(s) \ne 0 \}} \mathrm{d} L^i(s)=0$, for all $t\geq 0$. The columns $R^1$ and $R^2$ represent the directions in which the Brownian motion is pushed when the axes are reached, see Figure~\ref{fig:drift_reflection}.

The reflected Brownian motion $(Z(t))_{t\geq0}$ associated with $(\Sigma, \mu, R)$ is well defined \cite{varadhan_brownian_1985,Williams-85}, and is a fundamental stochastic process. 
This process has been extensively explored, and its multidimensional
version (a semimartingale reflected Brownian motion in the positive orthant $\mathbb{R}_+^d$, as
well as in convex polyhedrons) as well.
It has been studied in depth, 
with focuses on its definition and semimartingale properties \cite{varadhan_brownian_1985,Williams-85b,DaKu-94,williams_semimartingale_1995}, its recurrence or transience \cite{Williams-85b,dai_steady-state_1990,hobson_recurrence_1993,Ch-96,BrDaHa-10,Br-11,DaHa-12}, the possible particular (e.g., product) form of its stationary distribution \cite{harrison_multidimensional_1987,DiMo-09,OCOr-14}, its Lyapunov functions \cite{DuWi-94}, its links with other stochastic processes \cite{lega-87,Du-04,Lep}, its use to approximate large queuing networks \cite{foddy_1984,Baccelli_Fayolle_1987,harrison_brownian_1987,KeWh-96,KeRa-12}, the asymptotics of its stationary distribution \cite{harrison_reflected_2009,dai_reflecting_2011,franceschi_asymptotic_2016,Sa-+2}, numerical methods to compute the stationary distribution \cite{dai_steady-state_1990,dai_reflected_1992}, links with complex analysis \cite{foddy_1984,Baccelli_Fayolle_1987,BuChMaRa_2015,franceschi_tuttes_2016}, comparison and coupling techniques \cite{Sa-+1,Sa-15}, etc. 

The main contribution of the present paper is to find an exact expression for the stationary distribution (via its Laplace transforms, to be introduced in \eqref{eq:Laplace_transform_interior} and \eqref{eq:Laplace_transform_boundary}), thanks to the theory of boundary value problems (BVPs), see our Theorem \ref{thm:main}. This is one of the first attempts to apply boundary value techniques to diffusions in the quadrant, after \cite{Foschini} (under the symmetry conditions $\mu_1 = \mu_2$, $\sigma_{11}=\sigma_{22}$, and symmetric reflection vectors in \eqref{eq:RBMQP}), \cite{foddy_1984} (which concerns very specific cases of the covariance matrix, essentially the identity), \cite{Baccelli_Fayolle_1987} (on diffusions with a special behavior on the boundary), \cite{franceschi_tuttes_2016} (orthogonal reflections, solved by Tutte's invariant approach \cite{Tutte-95,bernardi_counting_2015}); see also the introduction of \cite{dai_reflecting_2011}, where the authors allude to the possibility of such an approach. \textcolor{black}{The application of BVP techniques to discrete models has a longer history, see \cite{fayolle_random_1999,kurkova_malyshevs_2003,bernardi_counting_2015} and references therein.}


\begin{figure}[t]
    \centering
    \begin{subfigure}{0.3\textwidth}
        \includegraphics[scale=0.5]{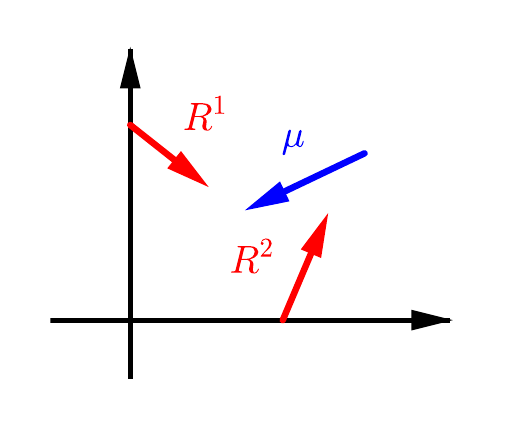}
        \caption{$\mu_1<0, \ \mu_2<0$}
        \label{fig:gull}
    \end{subfigure}
    \begin{subfigure}{0.3\textwidth}
        \includegraphics[scale=0.5]{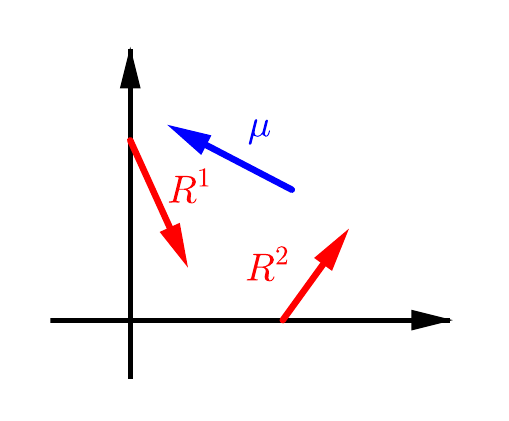}
        \caption{$\mu_1<0, \ \mu_2\geq0$}
        \label{fig:tiger}
    \end{subfigure}
    \begin{subfigure}{0.3\textwidth}
        \includegraphics[scale=0.5]{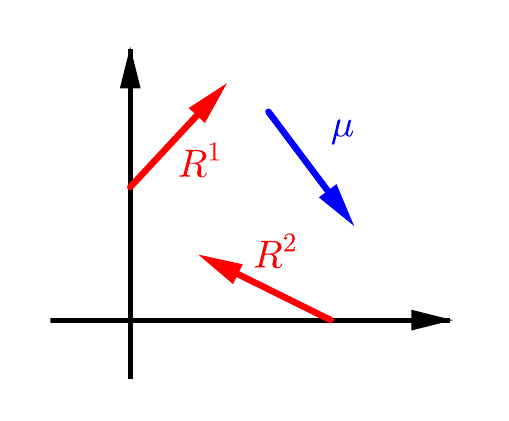}
        \caption{$\mu_1\geq0, \ \mu_2<0$}
        \label{fig:mouse}
    \end{subfigure}
    \caption{Drift $\mu$ and reflection vectors $R^1$ and $R^2$}
\label{fig:drift_reflection}
\end{figure}

\subsection*{Laplace transforms and functional equation}

The reflected Brownian motion defined in \eqref{eq:RBMQP} exists if and only if
\begin{equation*}
     \{r_{11}>0,\quad r_{22}>0,\quad \det R >0\} \quad \text{or} \quad \{r_{11}>0,\quad r_{22}>0,\quad  r_{12}>0,\quad r_{21}>0\}.
\end{equation*}
This condition is equivalent for $R$ to be completely-$\mathcal{S}$, which actually is a necessary and sufficient condition to the existence of reflected Brownian motion in arbitrary dimension, see \cite{taylor_existence_1993,reiman_boundary_1988}.

As for the stationary distribution, it exists if and only if
\begin{equation}
\label{eq:stationary_distribution_CNS}
     r_{11} > 0, \quad r_{22} > 0, \quad r_{11} r_{22} - r_{12} r_{21} > 0,\quad r_{22} \mu_1 - r_{12}  \mu_2 < 0, \quad r_{11} \mu_2 - r_{21}  \mu_1 < 0,
\end{equation}
see \cite{harrison_brownian_1987,harrison_reflected_2009}, and in that case it is absolutely continuous w.r.t.\ the Lebesgue measure \cite{dai_steady-state_1990,dai_reflected_1992,harrison_brownian_1987}, with density denoted by $\pi(x)=\pi(x_1,x_2)$. \textcolor{black}{See Figure \ref{fig:drift_reflection} for an example of parameters satisfying to \eqref{eq:stationary_distribution_CNS}; there are three different cases, according to the sign of the drift coordinates (having two non-negative coordinates is obviously incompatible with \eqref{eq:stationary_distribution_CNS}).}
Assumption \eqref{eq:stationary_distribution_CNS} implies in particular that $R$ is invertible and $R^{-1}\mu<0$, which turns out to be a necessary condition for the existence of the stationary distribution in any dimension, see \cite{harrison_brownian_1987}. From now, we will assume that \eqref{eq:stationary_distribution_CNS} is satisfied.
Let the Laplace transform of $\pi$ be defined by
\begin{equation}
\label{eq:Laplace_transform_interior}
     \varphi (\theta) = \mathbb{E}_{\pi} [\exp{ (\theta \cdot  Z)}] = \iint_{{\mathbb R}_+^2} \exp{( \theta \cdot x )} \pi(x) \mathrm{d} x.
\end{equation}
Furthermore we define two finite boundary measures $\nu_1$ and $\nu_2$ such that, for $A\subset\mathbb{R}_+$,
\begin{equation*}
     \nu_1 (A) = \mathbb{E}_{\pi} \bigg[ \int_0^1 \mathds{1}_{\{Z(t) \in \{0\}\times A\}} \mathrm{d}L^1 (t)\bigg],\qquad 
     \nu_2 (A) = \mathbb{E}_{\pi} \bigg[ \int_0^1 \mathds{1}_{\{Z(t) \in A \times\{0\} \}} \mathrm{d}L^2 (t)\bigg].
\end{equation*}     
The $\nu_i$ have their supports on the axes and may be viewed as boundary invariant measures. They are continuous w.r.t.\ the Lebesgue measure, see \cite{harrison_multidimensional_1987}. We define their Laplace transform by
\begin{equation}
\label{eq:Laplace_transform_boundary}
     \varphi_1 (\theta_2) =\int_{{\mathbb R}_+} \exp({\theta_2 x_2}) \nu_1(x_2) \mathrm{d} x_2,\qquad
       \varphi_2 (\theta_1) =\int_{{\mathbb R}_+} \exp({\theta_1 x_1}) \nu_2(x_1) \mathrm{d} x_1.
\end{equation}
The following functional equation relates the Laplace transforms:
\begin{equation}  
\label{eq:functional_equation}
     -\gamma (\theta) \varphi (\theta) =\gamma_1 (\theta) \varphi_1 (\theta_2) + \gamma_2 (\theta) \varphi_2 (\theta_1),
\end{equation}
where we have noted
\begin{equation}
\label{eq:def_gamma_gamma1_gamma2}
  \begin{cases}
     \hspace{1.5mm}\gamma (\theta)= \frac{1}{2}  \textcolor{black}{(\theta \cdot  \Sigma \theta) +  \theta \cdot  \mu}  =\frac{1}{2}(\sigma_{11} \theta_1^2+2\sigma_{12}\theta_1\theta_2 + \sigma_{22} \theta_2^2 )
+
\mu_1\theta_1+\mu_2\theta_2,  \\
     \gamma_1 (\theta)=  \textcolor{black}{R^1 \cdot  \theta} =r_{11} \theta_1 + r_{21} \theta_2, \\
     \gamma_2 (\theta)= \textcolor{black}{R^2 \cdot  \theta} =r_{12} \theta_1 + r_{22} \theta_2.
  \end{cases}
\end{equation}
The Laplace transforms \eqref{eq:Laplace_transform_interior} and \eqref{eq:Laplace_transform_boundary} (resp.\ Equation \eqref{eq:functional_equation}) exist (resp.\ holds) at least for values of $\theta=(\theta_1, \theta_2)$ with ${\Re}\,\theta_1\leq 0$ and ${\Re}\, \theta_2\leq 0$. To prove the functional equation \eqref{eq:functional_equation}, the main idea is to use an identity called basic adjoint relationship (BAR); see \cite[Section 2.1]{franceschi_tuttes_2016} and \cite{dai_reflecting_2011,foddy_1984} for details.

\subsection*{Kernel and associated quantities}

In this paragraph we introduce necessary notation to state our main results. By definition, the kernel of \eqref{eq:functional_equation} is the second degree polynomial $\gamma$. With this terminology, \eqref{eq:functional_equation} is sometimes referred to as a kernel equation.
The equality $\gamma(\theta_1, \theta_2) = 0$ with $\theta_1,\theta_2\in\mathbb C$ defines algebraic functions $\Theta_1^\pm(\theta_2)$ and $\Theta_2^\pm(\theta_1)$ by 
\begin{equation*}
     \gamma(\Theta_1^\pm(\theta_2), \theta_2)=\gamma(\theta_1, \Theta_2^\pm(\theta_1))= 0.
\end{equation*}
Solving these equations readily yields
\begin{equation}
\label{eq:definition_Theta_pm}
     \left\{\begin{array}{l}
     \Theta_1^\pm(\theta_2)=\dfrac{-(\sigma_{12}\theta_2+\mu_1)\pm\sqrt{\theta_2^2(\sigma_{12}^2-\sigma_{11}\sigma_{22})+2\theta_2(\mu_1\sigma_{12}-\mu_2\sigma_{11})+\mu_1^2}}{\sigma_{11}},\smallskip\smallskip\\
     \Theta_2^\pm(\theta_1)=\dfrac{-(\sigma_{12}\theta_1+\mu_2)\pm\sqrt{\theta_1^2(\sigma_{12}^2-\sigma_{11}\sigma_{22})+2\theta_1(\mu_2\sigma_{12}-\mu_1\sigma_{22})+\mu_2^2}}{\sigma_{22}}.\end{array}\right.
\end{equation}
The polynomials under the square roots in \eqref{eq:definition_Theta_pm} have two zeros (called branch points), real and of opposite signs. They are denoted by $\theta_2^{\pm}$ and $\theta_1^\pm$, respectively:
\begin{equation}
\label{eq:definition_theta_pm}
\left\{\begin{array}{l}
     \displaystyle\theta_2^\pm = \frac{(\mu_1\sigma_{12}-\mu_2\sigma_{11}) \pm \sqrt{(\mu_1\sigma_{12}-\mu_2\sigma_{11})^2 +\mu_1^2 \det{\Sigma}}}{\det\Sigma},\smallskip\smallskip\\
     \displaystyle\theta_1^\pm= \frac{(\mu_2\sigma_{12}-\mu_1\sigma_{22}) \pm \sqrt{(\mu_2\sigma_{12}-\mu_1\sigma_{22})^2 +\mu_2^2 \det{\Sigma}}}{\det\Sigma}.\end{array}\right.
\end{equation}
\textcolor{black}{The algebraic functions $\Theta_1^\pm (\theta_2)$ are meromorphic on the cut plane $\mathbb{C}\setminus ((-\infty,\theta_2^-]\cup[\theta_2^+,\infty))$. Similarly, $\Theta_2^\pm (\theta_1)$ are meromorphic on $\mathbb{C}\setminus ((-\infty,\theta_1^-]\cup[\theta_1^+,\infty))$.}

Our next important definition is the curve
     \begin{equation*}
     \R =\{\theta_2\in\mathbb C : \gamma(\theta_1,\theta_2)=0 \text{ and } \theta_1\in(-\infty,\theta_1^-)\}.
\end{equation*}
As it will be seen in Lemma \ref{lem:Lemma9BaFa87} (see also Figure \ref{fig:BVP_theta}), $\R$ is a \textcolor{black}{branch of} hyperbola.
We denote by 
\begin{equation}
\label{eq:curve_definition_negative}
     \R^-=\{\theta_2 \in \R : \Im\,\theta_2 \leq 0 \}
\end{equation} 
the negative imaginary part of $\R$ oriented from the vertex to infinity.

We further define the function 
\begin{equation}
\label{eq:definition_w}
     w(\theta_2)=T_{\frac{\pi}{\beta}}\left(-\frac{2\theta_2-(\theta_2^+ +\theta_2^-)}{\theta_2^+ -\theta_2^-}\right),
\end{equation}
where 
\begin{equation}
\label{eq:definition_beta}
\beta= \arccos {-\frac{\sigma_{12}}{\sqrt{\sigma_{11}\sigma_{22}}}}
\end{equation}
and for $a\geq0$, $T_a$ is the so-called generalized Chebyshev polynomial
\begin{equation}
\label{eq:Chebyshev_polynomial}
     T_a(x)  =\cos (a\arccos x)=\frac{1}{2} \Big\{\big(x+\sqrt{x^2-1}\big)^a+\big(x-\sqrt{x^2-1}\big)^a\Big\}.
\end{equation}
The function $w$ plays a special role regarding the curve $\R$, as for $\theta_2\in\R$ it satisfies $w(\theta_2)=w(\overline{\theta_2})$, see Lemma \ref{lem:conformal_gluing}. (Here and throughout, $\overline{\theta_2}$ denotes the complex conjugate number of $\theta_2$.)

Finally, let $G$ be the function whose expression is
\begin{equation}
\label{eq:definition_G}
     G(\theta_2)=\frac{\gamma_1}{\gamma_2}(\Theta_1^-(\theta_2),\theta_2)\frac{\gamma_2}{\gamma_1}(\Theta_1^-(\overline{\theta_2}),\overline{\theta_2}).
\end{equation}

\subsection*{Main results}
Our main result can be stated as follows.
\begin{thm} 
\label{thm:main}
Under the assumption \eqref{eq:stationary_distribution_CNS}, the Laplace transform $\phi_1$ in \eqref{eq:Laplace_transform_boundary} is equal to
\begin{multline}
\label{eq:main_formula_with_constants}
\phi_1(\theta_2)
=\\\nu_1(\mathbb{R}_+)  \left( \frac{w(0)-w(p)}{w(\theta_2)-w(p)} \right)^{-\chi} \exp\bigg\{\frac{1}{2i\pi} \int_{\R^-} \log G(\theta) \left[ \frac{w'(\theta)}{w(\theta)-w(\theta_2)}
- \frac{w'(\theta)}{w(\theta)-w(0)}
\right]
\mathrm{d}\theta\bigg\},
\end{multline}
where
\begin{itemize}
     \item $w$, $G$ and $\R^-$ are defined in \eqref{eq:definition_w},  \eqref{eq:definition_G} and \eqref{eq:curve_definition_negative}, respectively,\smallskip
     \item $\displaystyle \nu_1(\mathbb{R}_+)=\frac{r_{12}\mu_2-r_{22}\mu_1 }{\det R}$,
     \item the index $\chi$ is given by $\chi=\left\{\begin{array}{rl}
     0 & \displaystyle\text{if } \gamma_1(\theta_1^-,\Theta_2^-(\theta_1^-))\leqslant 0,\smallskip\\
      -1 & \displaystyle\text{if } \gamma_1(\theta_1^-,\Theta_2^-(\theta_1^-))>0,
      \end{array}\right.$
      \item $\displaystyle p=\frac{2r_{11}(\mu_1r_{21}-\mu_2r_{11})}{r_{11}^2\sigma_{22}-2r_{11}r_{21}\sigma_{12}+r_{21}^2\sigma_{11}}$,\smallskip
      \item to define the function $\log G(\theta)$ on $\R^-$, we use the determination of the logarithm taking a value in $i\cdot(-\pi,\pi]$ at the vertex of $\R^-$ and varying continuously over the curve $\R^-$.
\end{itemize}
The function $\phi_2(\theta_1)$ equals $\phi_1(\theta_1)$ in \eqref{eq:main_formula_with_constants} after the change of parameters 
\begin{equation*}
     \sigma_{11}\leftrightarrow\sigma_{22},\quad \mu_{1}\leftrightarrow\mu_{2},\quad r_{11}\leftrightarrow r_{22},\quad r_{12}\leftrightarrow r_{21}. 
\end{equation*}
The functional equation~\eqref{eq:functional_equation} finally gives an explicit formula for the bivariate Laplace transform~$\phi$.
\end{thm}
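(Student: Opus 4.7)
The plan is to convert the problem of determining $\varphi_1$ into a Carleman-type boundary value problem on the hyperbola $\mathcal{R}$, and then to solve that problem explicitly by means of the conformal gluing function $w$. I would first continue $\varphi_1$ meromorphically from its initial half-plane of definition $\{\Re\theta_2\leq 0\}$ into the domain bounded by $\mathcal{R}$, using the functional equation \eqref{eq:functional_equation} itself in the spirit of Malyshev and Fayolle--Iasnogorodski. Specializing \eqref{eq:functional_equation} at $\theta_1 = \Theta_1^-(\theta_2)$ kills the kernel and yields
\[
\gamma_1(\Theta_1^-(\theta_2),\theta_2)\,\varphi_1(\theta_2) + \gamma_2(\Theta_1^-(\theta_2),\theta_2)\,\varphi_2(\Theta_1^-(\theta_2))=0.
\]
For $\theta_2\in\mathcal{R}$, Lemma~\ref{lem:Lemma9BaFa87} will give $\Theta_1^-(\theta_2)\in(-\infty,\theta_1^-)$ real with $\Theta_1^-(\theta_2)=\Theta_1^-(\overline{\theta_2})$, so that $\varphi_2(\Theta_1^-(\theta_2))$ is real. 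Writing the same identity at $\overline{\theta_2}$ and eliminating $\varphi_2$ then produces the Carleman boundary condition
\[
\varphi_1(\overline{\theta_2})=G(\theta_2)\,\varphi_1(\theta_2),\qquad \theta_2\in\mathcal{R},
\]
with $G$ as in \eqref{eq:definition_G}.

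Next, I would invoke the conformal gluing function $w$ defined in \eqref{eq:definition_w}. By Lemma~\ref{lem:conformal_gluing}, $w$ identifies conjugate points of $\mathcal{R}$, and I expect it to realize a conformal isomorphism from the interior of $\mathcal{R}$ onto a complex half-plane. The change of variables $z=w(\theta_2)$ transforms the shift $\theta_2\mapsto\overline{\theta_2}$ into ordinary complex conjugation $z\mapsto\overline z$, and hence converts the Carleman problem into a scalar Riemann--Hilbert problem on $\mathbb R$ of the form $\Psi^+(x)=\widetilde G(x)\,\Psi^-(x)$ for $\Psi:=\varphi_1\circ w^{-1}$. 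The index $\chi$ is then identified with (minus) the winding number of $\widetilde G$ around the origin along the real line, equivalently the variation of $\arg G$ along $\mathcal R$; the dichotomy in the statement simply reflects whether $\gamma_1$ keeps a constant sign on $\mathcal{R}$, a property controlled by its value at the vertex $\theta_1^-$.

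The Riemann--Hilbert problem is then solved by the classical Plemelj--Sokhotski formulae: a particular homogeneous solution is obtained as the exponential of a Cauchy-type integral of $\log G$ along $\mathcal{R}^-$, and orienting $\mathcal{R}^-$ from its vertex to infinity with the determination of $\log G$ prescribed in the statement ensures that this factor is single-valued. The rational prefactor $\bigl((w(0)-w(p))/(w(\theta_2)-w(p))\bigr)^{-\chi}$ absorbs the index and a possible simple singularity at $\theta_2=p$, which is precisely the nontrivial zero of $\gamma_1$ on the kernel curve $\{\gamma=0\}$ (a direct elimination gives the displayed formula for $p$). The multiplicative constant is finally pinned down by normalization at $\theta_2=0$: the value $\varphi_1(0)=\nu_1(\mathbb{R}_+)=(r_{12}\mu_2-r_{22}\mu_1)/\det R$ is computed independently from the basic adjoint relationship, testing against exponentials $\exp(\theta\cdot x)$ and letting $\theta\to 0$.

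The most delicate points will be the meromorphic continuation of $\varphi_1$ beyond $\mathcal{R}$ with a precise description of its singularities, the rigorous determination of the index $\chi$, and the selection of the unique Riemann--Hilbert solution whose growth at infinity is compatible with $\varphi_1$ being a bounded Laplace transform on $\{\Re\theta_2\leq 0\}$ — these three ingredients are exactly what forces the particular shape of \eqref{eq:main_formula_with_constants}. Once $\varphi_1$ is in hand, the formula for $\varphi_2$ follows by the obvious symmetry $1\leftrightarrow 2$ in the data, and $\varphi$ is recovered directly from \eqref{eq:functional_equation}.
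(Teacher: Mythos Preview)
Your proposal is correct and follows essentially the same route as the paper: meromorphic continuation of $\varphi_1$ via the functional equation, derivation of the Carleman boundary condition on $\mathcal{R}$, reduction by the conformal gluing function to a standard Riemann BVP, computation of the index $\chi$, resolution via Plemelj--Sokhotski, and normalization at $\theta_2=0$ using $\varphi_1(0)=\nu_1(\mathbb{R}_+)$. The only minor imprecision is geometric: the paper's gluing map $W$ (a M\"obius transform of $w$) sends $\mathcal{G}_{\mathcal{R}}$ onto the slit plane $\mathbb{C}\setminus[0,1]$ rather than a half-plane, so the Riemann problem is posed on the open segment $[0,1]$ (with upper/lower boundary values $\psi_1^{\pm}$) in Muskhelishvili's framework, which is exactly what yields the contour integral over $\mathcal{R}^-$ in \eqref{eq:main_formula_with_constants}.
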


Let us now give some comments around Theorem~\ref{thm:main}.
\begin{itemize}
     \item Theorem \ref{thm:main} completely generalizes the results of \cite{Foschini} (with symmetry conditions), \cite{foddy_1984} (with the identity covariance matrix $\Sigma$) and \cite{franceschi_tuttes_2016} (orthogonal reflections on the axes). It offers the first explicit expression of the Laplace transforms, covering all the range of parameters $(\Sigma,\mu,R)$ satisfying to \eqref{eq:stationary_distribution_CNS}, thereby solving an old open problem.
     \item We obtain three corollaries of Theorem \ref{thm:main}, each of those corresponds to an already known result: we first compute the asymptotics of the boundary densities (Section \ref{sec:asymptotic_results}, initially obtained in \cite{DaMi-13}); second we derive the product form expression of the density in the famous skew-symmetric case (Section \ref{subsec:skew}, result originally proved in \cite{harrison_multidimensional_1987}); finally we compute the expression of the Laplace transforms in the case of orthogonal reflections (Section \ref{sec:orthogonal_reflection}, see \cite{franceschi_tuttes_2016} for the first derivation).
     \item It is worth remarking that the expression \eqref{eq:main_formula_with_constants} is intrinsically non-continuous in terms of the parameters: the index $\chi$ can indeed take two different values (namely, $0$ and $1$). For this reason, \eqref{eq:main_formula_with_constants} actually contains two different formulas. See Remarks \ref{rem:different_cases_singularities} and \ref{rem:different_cases_asymptotic} for further related comments.
     \item The paper \cite{franceschi_asymptotic_2016} obtains the exact asymptotics of the stationary distribution along any direction in the quarter plane, see \cite[Theorems 22--28]{franceschi_asymptotic_2016}. Constants in these asymptotics involve the functions $\phi_1$ and $\phi_2$ in \eqref{eq:Laplace_transform_boundary}, and can thus be made explicit with Theorem~\ref{thm:main}.
     \item It is also interesting to dissect \eqref{eq:main_formula_with_constants} and to notice that certain quantities in that formula depend only on the behavior of the process in the interior of the quadrant ($w$ and $\R^-$), while the remaining ones mix properties of the interior and boundary of the quarter plane ($\nu_1(\mathbb{R}_+)$, $p$, $\chi$ and $G$).
     \item The statement of Theorem~\ref{thm:main} (namely, an expression of the Laplace transform as a Cauchy integral), as well as the techniques we shall employ to prove it (viz., reduction to BVPs with shift), are reminiscent of the results and methods used for discrete random walks in the quarter plane, see \cite{fayolle_random_1999} for a modern reference, and \cite{malysev_analytic_1972,fayolle_two_1979,kurkova_malyshevs_2003} for historical breakthroughs.
     \item Altogether, Theorem~\ref{thm:main} illustrates that the analytic approach consisting in solving quarter plane problems via BVPs is better suited for diffusions than for discrete random walks. We can actually treat any wedge, covariance matrix, drift vector and reflection vectors (see Corollary \ref{cor:main} below), whereas in the discrete case, hypotheses should be done on the boundedness of the jumps (only small steps are considered in \cite{fayolle_random_1999,kurkova_malyshevs_2003,BMMi-10,bernardi_counting_2015}) and on the cone (typically, half and quarter planes only).
\end{itemize}

Theorem \ref{thm:main} further leads to an explicit expression for the Laplace transform of the stationary distribution of reflected Brownian motion in an arbitrary convex wedge, as stated in the following corollary, whose proof is postponed to Appendix \ref{app:BM_cones}.
\begin{cor}
\label{cor:main}
Let $\widetilde{Z}$ be a reflected Brownian motion in a wedge of angle $\beta\in(0,\pi)$, of covariance matrix $\widetilde{\Sigma}$, drift $\widetilde{\mu}$ and reflection matrix $\widetilde{R}=(\widetilde{R}^1,\widetilde{R}^2)$, corresponding to the angles of reflection $\delta$ and $\epsilon$ on Figure \ref{fig:linear_transformation}. Assume it is recurrent and note $\widetilde{\pi}$ the stationary distribution and $\widetilde{\phi}$ its Laplace transform. Let
\begin{equation*}
     T_1= 
\begin{pmatrix}
   \frac{1}{\sin \beta}& \cot \beta \\
   0 & 1
\end{pmatrix}
,\qquad
T_1^{-1}= 
\begin{pmatrix}
   \sin \beta & -{\cos \beta} \\
   0 & \phantom{-}1
\end{pmatrix}
.
\end{equation*}
Then
\begin{equation*}
     \widetilde{\phi}(\widetilde{\theta})=\phi(T_1^\top \widetilde{\theta}),
\end{equation*}
where $\phi$ is the Laplace transform \eqref{eq:main_formula_with_constants} of Theorem \ref{thm:main} associated to $(\Sigma,\mu,R)$, with 
\begin{equation*}
\Sigma=T_1\widetilde{\Sigma}T_1^\top
, \quad
\mu=T_1^{-1}\widetilde{\mu}
\quad \text{and} \quad
R=T_1^{-1}\widetilde{R}.
\end{equation*}
\end{cor}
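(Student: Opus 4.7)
The proof proceeds by applying an invertible linear map that identifies the wedge of angle $\beta$ with the quadrant $\mathbb R_+^2$, thereby reducing the corollary to Theorem~\ref{thm:main}. A direct computation shows that $T_1^{-1}$ sends the two bounding rays of the wedge (the positive $x$-axis and the ray through $(\cos\beta,\sin\beta)$) onto the positive coordinate axes, so the image process $Z:=T_1^{-1}\widetilde Z$ takes values in $\mathbb R_+^2$.

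Applying $T_1^{-1}$ to the defining SDE \eqref{eq:RBMQP} of $\widetilde Z$ yields an SDE of the same shape for $Z$, and standard computations identify $Z$ as a semimartingale reflected Brownian motion in the quadrant with transformed covariance, drift and reflection matrix $(\Sigma,\mu,R)$ as in the statement; in particular, $\widetilde\mu$ and the columns of $\widetilde R$ are vectors of the state space and so transform by $T_1^{-1}$, while the covariance changes by the appropriate conjugation. The transformed local time $L^i$ still increases only when $Z^i=0$, since $T_1^{-1}$ takes the $i$-th side of the wedge onto the $i$-th axis of the quadrant. Conditions \eqref{eq:stationary_distribution_CNS} and \eqref{eq:drift_negative} are preserved by this change of frame, so Theorem~\ref{thm:main} applies to $Z$ and produces an explicit formula for its Laplace transform $\phi$.

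The identity $\widetilde\phi(\widetilde\theta)=\phi(T_1^\top\widetilde\theta)$ then follows from the change of variables $\widetilde z=T_1 z$ in the Laplace transform integral, using that $\widetilde\theta\cdot\widetilde z=(T_1^\top\widetilde\theta)\cdot z$ and the absorption of Jacobians between the two stationary densities:
\begin{equation*}
     \widetilde\phi(\widetilde\theta)=\mathbb E_{\widetilde\pi}\bigl[e^{\widetilde\theta\cdot\widetilde Z}\bigr]=\mathbb E_{\pi}\bigl[e^{(T_1^\top\widetilde\theta)\cdot Z}\bigr]=\phi(T_1^\top\widetilde\theta).
\end{equation*}
The main technical obstacle is to justify rigorously that the linear push-forward of a semimartingale reflected Brownian motion in a wedge remains a semimartingale reflected Brownian motion in the image wedge, with the announced transformations of $\widetilde\Sigma$, $\widetilde\mu$, $\widetilde R$ and $\widetilde L$, and in particular to track the local-time constraint through the change of frame. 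Once this functorial statement is settled the rest of the proof reduces to matrix bookkeeping.
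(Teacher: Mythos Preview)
Your argument is correct and follows essentially the same route as the paper: define $Z=T_1^{-1}\widetilde Z$, check that it is a quadrant SRBM with the transformed parameters, and relate the two Laplace transforms by the linear change of variable $\widetilde\theta\cdot\widetilde Z=(T_1^\top\widetilde\theta)\cdot Z$. The paper is only slightly more explicit at one point: rather than invoking ``absorption of Jacobians'', it derives $\Pi=\widetilde\Pi\circ T_1$ from the long-time limit $\mathbb P_x[Z_t\in A]\to\Pi(A)$ and then deduces $\pi=\vert\det T_1\vert\,\widetilde\pi\circ T_1$ by a change of variables, after which the integral computation of $\widetilde\phi(\widetilde\theta)=\phi(T_1^\top\widetilde\theta)$ is identical to yours.
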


\subsection*{Structure of the paper}
\begin{itemize}
     \item Section \ref{sec:kernel}: analytic preliminaries, continuation of the Laplace transforms and definition of an important hyperbola
     \item Section \ref{sec:Riemann}: statement and proof that the Laplace transforms satisfy BVP of Carleman-type on branches of hyperbolas, transformation of the Carleman BVP with shift into a (more classical) Riemann BVP, study of the conformal mapping allowing this transformation, resolution of the BVP
     \item Section \ref{sec:asymptotics}: asymptotics of the stationary distribution, links with Dai and Miyazawa's \cite{DaMi-13} asymptotic results
     \item Section \ref{sec:algebraic_nature}: simplifications of the integral expression of Theorem \ref{thm:main} for models satisfying the skew-symmetric condition, for orthogonal reflections (leading to a new proof of the results of \cite{franceschi_tuttes_2016}), links with Dieker and Moriarty's results \cite{DiMo-09}
     \item Appendix \ref{app:BM_cones}: equivalence between Brownian motion in the quarter plane and Brownian motion in convex wedges
\end{itemize}

\subsection*{Acknowledgements}
We thank Irina Kurkova and Yuri Suhov for interesting discussions. We acknowledge support from the ``projet MADACA'' (2014--2016), funded by the R\'egion Centre-Val de Loire (France). \textcolor{black}{Finally we thank the editors and referees for their useful remarks and suggestions.}

\section{Methodology and analytic preliminaries}
\label{sec:kernel}

\subsection{Methodology and positioning of our work regarding \cite{Foschini,foddy_1984,Baccelli_Fayolle_1987,dai_reflecting_2011}}
\label{sec:Methodology}

Schematically, our argument for the proof of Theorem \ref{thm:main} is composed of the following steps: 
\begin{enumerate}[label={\rm (\arabic{*})},ref={\rm (\arabic{*})}]
     \item\label{it:pre}presentation of the functional equation, analytic preliminaries, meromorphic continuation of the Laplace transforms (Section \ref{sec:kernel});
     \item\label{it:Carleman_BVP}statement of a Carleman BVP with shift satisfied by the Laplace transforms (Section~\ref{sec:Carleman}); 
     \item\label{it:conformal_mapping}introduction of a conformal mapping, allowing to transform the latter BVP into a more classical Riemann BVP (Section \ref{sec:gluing}), see Figure \ref{fig:different_levels}; 
     \item\label{it:Riemann_BVP}statement of the Riemann BVP (Section \ref{sec:Riemann_BVP}); 
     \item\label{it:index}definition and study of the index (denoted by $\chi$ in Theorem \ref{thm:main}), which turns out to have a crucial role in solving the Riemann BVP (Section \ref{sec:index}); 
     \item\label{it:resolution}resolution of the BVP (Section \ref{sec:resolution}).
\end{enumerate}
Except for the study of the index (item \ref{it:index}, which is specific to our problem at hand), the above structuration of the proof dates back to (in chronological order) \cite{fayolle_two_1979} (for the discrete setting, which later led to the book \cite{fayolle_random_1999}), \cite{Foschini}, \cite{foddy_1984} and \cite{Baccelli_Fayolle_1987}.

To begin the discussion, let us remind that the process in \cite{Baccelli_Fayolle_1987} is absorbed on the boundary and then relaunched in the quadrant after an exponential time; it is therefore totally different from ours. Let us also recall that the analytic study of reflected Brownian motion in the quadrant was initiated in \cite{Foschini,foddy_1984}, but both \cite{Foschini} and \cite{foddy_1984} did quite restrictive assumptions on the process (symmetry conditions $\mu_1 = \mu_2$, $\sigma_{11}=\sigma_{22}$, and symmetric reflection vectors in \cite{Foschini}, identity covariance matrix in \cite{foddy_1984}).

In this paper we follow the same steps \ref{it:pre}--\ref{it:resolution} and use a synthetic approach of \cite{fayolle_two_1979,Foschini,foddy_1984,Baccelli_Fayolle_1987}, to eventually go further than the existing literature and prove Theorem \ref{thm:main}. Some technical details of the present work are borrowed from \cite{Foschini,foddy_1984,Baccelli_Fayolle_1987} and more particularly from \cite{Baccelli_Fayolle_1987}. Although being different, the stochastic process of \cite{Baccelli_Fayolle_1987} and ours share the property of satisfying a functional equation with the same left-hand side (as in \eqref{eq:functional_equation}). Accordingly, technical details will be very similar as soon as they concern the kernel; this is the case of Section~\ref{sec:kernel}. \textcolor{black}{In particular, as in \cite{Baccelli_Fayolle_1987}, the Laplace transforms will satisfy BVP on hyperbolas.}

On the other hand, many points of our analysis profoundly differ from that of \cite{Baccelli_Fayolle_1987}: the resolution of the BVP is different as the right-hand side of \eqref{eq:functional_equation} is not comparable to that of \cite{Baccelli_Fayolle_1987}. Moreover we make our main result as tractable as possible: see our Section \ref{sec:algebraic_nature}, where we present many techniques to simplify the analytic expression derived in Theorem \ref{thm:main}. We further also propose asymptotic considerations, in close relation to those of \cite{dai_reflecting_2011}.

\textcolor{black}{We shall prove in full detail our main results only in the case where both coordinates of the drift are negative:
\begin{equation}
\label{eq:drift_negative}
     \mu_1<0,\qquad \mu_2<0.
\end{equation}
This hypothesis (also done in \cite{Foschini,foddy_1984,franceschi_asymptotic_2016}) is only technical, and allows us to reduce the number of cases to handle. In Section \ref{subsec:generalizations} we comment on the case of a drift $\mu$ with one non-negative coordinate (having two non-negative coordinates is incompatible with \eqref{eq:stationary_distribution_CNS}), and we explain how Theorem \ref{thm:main} remains valid in this case.}

\subsection{\textcolor{black}{Real points of the kernel}}
\label{subsec:functional_equation}

The set of real points of the zero set of the kernel
\begin{equation*}
     \{(\theta_1,\theta_2)\in\mathbb R^2: \gamma(\theta_1,\theta_2)=0\}
\end{equation*}
defines an ellipse, see Figure \ref{fig:p_ellipse}. Introduced in \cite{dai_reflecting_2011}, this ellipse offers the possibility of presenting many analytical quantities in a clear and compact way:
\begin{itemize}
     \item the branch points $\theta_1^-$ and $\theta_1^+$ (resp.\ $\theta_2^-$ and $\theta_2^+$) in \eqref{eq:definition_theta_pm} are the leftmost and rightmost (resp.\ bottommost and topmost) points on the ellipse;
     \item the drift $\mu$ is orthogonal to the tangent at the origin;
     \item the set of points where $\gamma_1(\theta_1,\theta_2)=0$ and $\gamma_2(\theta_1,\theta_2)=0$ are straight lines, orthogonal to the reflection vectors $R^1$ and $R^2$. Their intersection points with the ellipse are easily computed (notice that these intersection points appear in the statement of Theorem \ref{thm:main}, in particular in the index $\chi$).
\end{itemize}

\begin{figure}[h!] 
\centering
\includegraphics[scale=1]{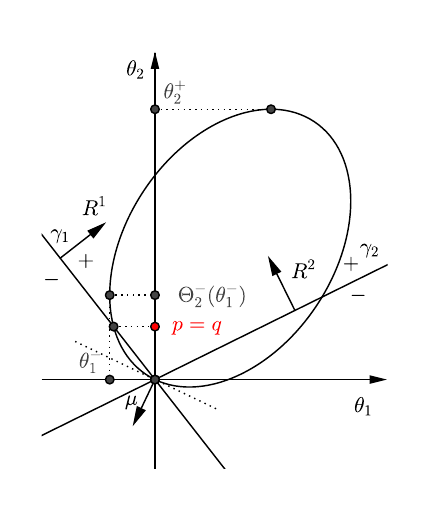}
\includegraphics[scale=1]{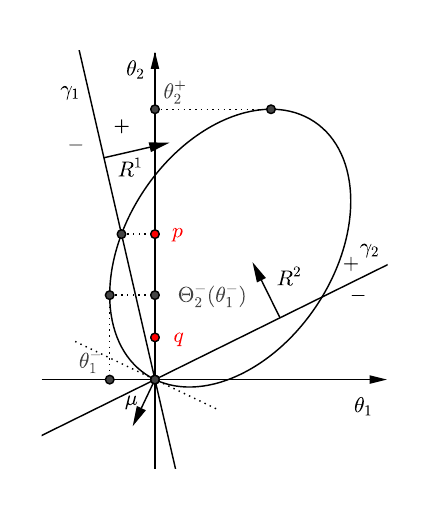}
\caption{The ellipse and the two straight lines are the sets of real points $(\theta_1,\theta_2)$ which cancel $\gamma$, $\gamma_1$ and $\gamma_2$, respectively. Location of $p$ (see \eqref{def:definition_p}) and $q$ (see \eqref{eq:def_q}) according to the sign of $\gamma_1(\theta_1^-,\Theta_2^-(\theta_1^-))$}
\label{fig:p_ellipse}
\end{figure}

\subsection{Meromorphic continuation and poles of the Laplace transforms}
\label{subsec:continuation}
In Section~\ref{sec:Carleman} we shall state a boundary condition for $\phi_1$, on a curve lying outside its natural domain of definition (namely, the half plane with negative real part). The statement hereafter (straightforward consequence of the functional equation \eqref{eq:functional_equation}, see Proposition \ref{prop:continuation_Laplace_transform} for an extended version) proposes a meromorphic continuation on a domain containing the latter curve.

\begin{lem}
\label{lem:continuation}
The Laplace transform $\phi_1(\theta_2)$ can be extended meromorphically to the open and simply connected set 
\begin{equation}
\label{eq:domain_continuation}
     \{\theta_2\in\textcolor{black}{\mathbb{C} \setminus (\theta_2^+,\infty)} : \Re\,\theta_2\leqslant 0 \text{ or } \Re\, \Theta_1^-(\theta_2) <0\},
\end{equation}
by mean of the formula
\begin{equation}
\label{eq:continuation_formula}
     \phi_1(\theta_2)=-\frac{\gamma_2}{\gamma_1}(\Theta_1^-(\theta_2),\theta_2)\phi_2(\Theta_1^-(\theta_2)).
\end{equation}
\end{lem}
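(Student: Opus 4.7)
The strategy is a direct substitution argument using the functional equation, followed by a verification of analyticity and of the topology of the extended domain.

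\textbf{Step 1: Derivation of the formula.} For $(\theta_1,\theta_2)$ with $\Re\,\theta_1\le 0$ and $\Re\,\theta_2\le 0$, the functional equation \eqref{eq:functional_equation} holds. Specializing at $\theta_1=\Theta_1^-(\theta_2)$, where by definition $\gamma(\Theta_1^-(\theta_2),\theta_2)=0$, the left-hand side of \eqref{eq:functional_equation} vanishes and we obtain the algebraic identity
\begin{equation*}
\gamma_1(\Theta_1^-(\theta_2),\theta_2)\,\phi_1(\theta_2)+\gamma_2(\Theta_1^-(\theta_2),\theta_2)\,\phi_2(\Theta_1^-(\theta_2))=0,
\end{equation*}
which rearranges to \eqref{eq:continuation_formula} (up to the sign convention of the statement), valid on the overlap region $\{\Re\,\theta_2\le 0\}\cap\{\Re\,\Theta_1^-(\theta_2)\le 0\}$. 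The plan is then to take the right-hand side of \eqref{eq:continuation_formula} as the definition of $\phi_1$ on $\{\Re\,\Theta_1^-(\theta_2)<0\}$, and to glue it to the original Laplace transform on $\{\Re\,\theta_2\le 0\}$.

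\textbf{Step 2: Analyticity and meromorphy of the extension.} Since $\Theta_1^-$ is algebraic with branch points only at the two real numbers $\theta_2^\pm$ given in \eqref{eq:definition_theta_pm}, a single-valued analytic branch of $\Theta_1^-$ exists on $\mathbb{C}\setminus[\theta_2^-,\theta_2^+]$, and in particular on the set \eqref{eq:domain_continuation}. On that set, the function $\phi_2(\Theta_1^-(\theta_2))$ is well-defined and holomorphic whenever $\Re\,\Theta_1^-(\theta_2)<0$, because $\phi_2$ is holomorphic on the open left half-plane by \eqref{eq:Laplace_transform_boundary}. The factor $(\gamma_2/\gamma_1)(\Theta_1^-(\theta_2),\theta_2)$ is a ratio of polynomials in $(\Theta_1^-(\theta_2),\theta_2)$, hence meromorphic, with poles precisely at the (isolated) zeros of $\theta_2\mapsto\gamma_1(\Theta_1^-(\theta_2),\theta_2)$. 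This accounts for the meromorphic (and not merely holomorphic) nature of the continuation.

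\textbf{Step 3: Gluing and uniqueness.} On the intersection of the two half-planes $\{\Re\,\theta_2\le 0\}$ and $\{\Re\,\Theta_1^-(\theta_2)<0\}$, the two definitions of $\phi_1$ coincide by the computation of Step 1; this intersection has non-empty interior (points of the form $(0,\theta_2)$ with $\theta_2$ slightly to the left of the imaginary axis already belong to both sets, via \eqref{eq:definition_Theta_pm} and the assumption $\mu_1<0$). By the identity principle, the extension is unique.

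\textbf{Step 4: Openness and simple connectedness of the domain.} The set in \eqref{eq:domain_continuation} is the union of a closed half-plane and an open set. Openness follows from the observation that any point $\theta_2$ with $\Re\,\theta_2=0$ belongs to the interior because a neighborhood of such a point is contained in $\{\Re\,\theta_2\le 0\}\cup\{\Re\,\Theta_1^-(\theta_2)<0\}$: indeed, along the imaginary axis one can compute $\Re\,\Theta_1^-(it)$ explicitly from \eqref{eq:definition_Theta_pm} and verify it is $<0$ outside the branch cut (this is the only slightly technical point, and is the one I would spend the most care on; it is essentially the statement that the image of the left half-plane boundary under $\Theta_1^-$ sits in the closed left half-plane, a consequence of the drift being negative, cf.\ \eqref{eq:drift_negative}). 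Simple connectedness of the resulting open set then follows because it is a union of two simply connected half-plane-like regions glued along a connected strip. With these verifications in place, \eqref{eq:continuation_formula} defines a single-valued meromorphic function on \eqref{eq:domain_continuation} extending $\phi_1$, which is the assertion of the lemma.

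The only non-routine obstacle is the topological/geometric verification in Step 4, namely controlling where $\Re\,\Theta_1^-(\theta_2)<0$ in terms of the position of $\theta_2$; once one knows that this set, together with the closed left half-plane, forms an open simply connected domain avoiding the branch cut $[\theta_2^-,\theta_2^+]$, the rest of the argument is a direct application of the functional equation.
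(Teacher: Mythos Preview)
Your approach is essentially identical to the paper's: evaluate the functional equation \eqref{eq:functional_equation} at $(\Theta_1^-(\theta_2),\theta_2)$ on the non-empty open overlap $\{\Re\,\theta_2<0\}\cap\{\Re\,\Theta_1^-(\theta_2)<0\}$, then extend by the identity principle, with the paper simply outsourcing the openness and simple connectedness of \eqref{eq:domain_continuation} to \cite[\S 2.4]{franceschi_asymptotic_2016} rather than arguing it directly as you attempt in Step~4. One small slip in Step~2: since the discriminant in \eqref{eq:definition_Theta_pm} has negative leading coefficient, it is positive on $[\theta_2^-,\theta_2^+]$ and negative outside, so the natural domain of single-valuedness for $\Theta_1^-$ is $\mathbb{C}\setminus\big((-\infty,\theta_2^-]\cup[\theta_2^+,\infty)\big)$ rather than $\mathbb{C}\setminus[\theta_2^-,\theta_2^+]$ (this matters because $0\in[\theta_2^-,\theta_2^+]$ and you need $\Theta_1^-$ defined there for the gluing).
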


\begin{proof}
\textcolor{black}{The domain \eqref{eq:domain_continuation} is simply connected by \cite[\S 2.4]{franceschi_asymptotic_2016}. The formula \eqref{eq:continuation_formula}, see \cite[Lemma 6]{franceschi_asymptotic_2016}, is a direct consequence of the functional equation \eqref{eq:functional_equation} evaluated at $(\Theta_1^-(\theta_2),\theta_2)$, first on the (non-empty) open domain
\begin{equation*}
     \{\theta_2\in\mathbb{C} : \Re\,\theta_2< 0 \text{ and } \Re\, \Theta_1^-(\theta_2) <0\}.\qedhere
\end{equation*}}
\end{proof}
\textcolor{black}{Due to the continuation formula \eqref{eq:continuation_formula}, the only possible pole of $\phi_1$ in the domain \eqref{eq:domain_continuation} will come from a cancelation of the denominator $\gamma_1$.} \textcolor{black}{More precisely,} let $p$ be the (unique, when it exists) non-zero point such that 
\begin{equation} 
\label{def:definition_p}
     \gamma_1(\Theta_1^- (p),p)=0.
\end{equation}
\textcolor{black}{
It follows from \eqref{def:definition_p} that $p$ satisfies a second degree polynomial equation with real coefficients. As one of the roots is $0$ the other one must be $p$, which is then real and equals (when it exists)
\begin{equation}
\label{eq:expression_p}
     p=\frac{2r_{11}(\mu_1r_{21}-\mu_2r_{11})}{r_{11}^2\sigma_{22}-2r_{11}r_{21}\sigma_{12}+r_{21}^2\sigma_{11}}.
\end{equation}}\textcolor{black}{Formula \eqref{def:definition_p} means that $p$ is the ordinate of the intersection point between the ellipse $\gamma=0$ and the line $\gamma_1=0$, see Figure \ref{fig:p_ellipse}. The intersection point always exists but sometimes its abscissa is associated to $\Theta_1^-$ (when $p$ exists, i.e., $\gamma_1(\Theta_1^\pm(\theta_2^+),\theta_2^+)\geqslant 0$) and sometimes to $\Theta_1^+$ (when $p$ doesn't exist, i.e., $\gamma_1(\Theta_1^\pm(\theta_2^+),\theta_2^+)< 0$), the limit case being $p=\theta_2^+$, see Figure \ref{fig:p_ellipse_bis}.}
\begin{figure}[hbtp] 
\centering
\includegraphics[scale=1]{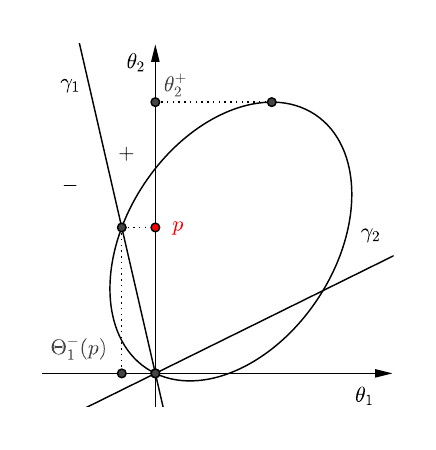}
\includegraphics[scale=1]{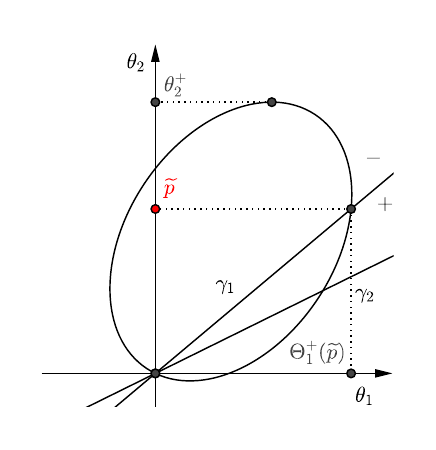}
\caption{The intersection point between the ellipse $\gamma=0$ and the  straight line $\gamma_1=0$. On the left side $p$ exists. \textcolor{black}{On the right side $p$ doesn't exist, although $\widetilde p$,} \textcolor{black}{defined by that $\gamma_1(\Theta_1^+(\widetilde p), \widetilde p)=0$, exists}}
\label{fig:p_ellipse_bis}
\end{figure}

\textcolor{black}{Let us finally remark that the pole that $\phi_1$ may have at $p$ is necessarily simple, due to the expression \eqref{eq:def_gamma_gamma1_gamma2} of $\gamma_1$.}

\subsection{An important hyperbola}

For further use, we need to introduce the curve
\begin{equation}
\label{eq:curve_definition}
     \R =\{\theta_2\in\mathbb C: \gamma(\theta_1,\theta_2)=0 \text{ and } \theta_1\in(-\infty,\theta_1^-)\}=\Theta_2^\pm ((-\infty,\theta_1^-)).
\end{equation}
It is symmetrical w.r.t.\ the horizontal axis, see Figure~\ref{fig:BVP_theta}. Indeed, the discriminant of $\Theta_2^\pm$ (i.e., the polynomial under the square root in \eqref{eq:definition_Theta_pm}) is positive on $(\theta_1^-, \theta_1^+)$ and negative on ${\mathbb R} \setminus [\theta_1^-, \theta_1^+]$. Accordingly, the branches $\Theta_2^\pm$ take respectively real and complex conjugate values on the sets above. Furthermore, $\R$ has a simple structure, as shown by the following elementary result:
\begin{lem}[Lemma 9 in \cite{Baccelli_Fayolle_1987}]
\label{lem:Lemma9BaFa87}
The curve $\R$ in~\eqref{eq:curve_definition} is a branch of hyperbola, whose equation is
\begin{equation}
\label{eq:hyperbole}
     \sigma_{22}(\sigma_{12}^2-\sigma_{11}\sigma_{22})x^2+\sigma_{12}^2\sigma_{22}y^2-2\sigma_{22}(\sigma_{11}\mu_2-\sigma_{12}\mu_1)x=\mu_2(\sigma_{11}\mu_2-2\sigma_{12}\mu_1).
\end{equation}
\textcolor{black}{The curve $\R$ is the right branch of the hyperbola if the covariance factor $\sigma_{12}>0$, the left branch if $\sigma_{12}<0$, and a vertical straight line in the limit case $\sigma_{12}=0$.}
\end{lem}
\begin{figure}[hbtp]
    \centering
    \begin{subfigure}{0.3\textwidth}
        \includegraphics[scale=0.8]{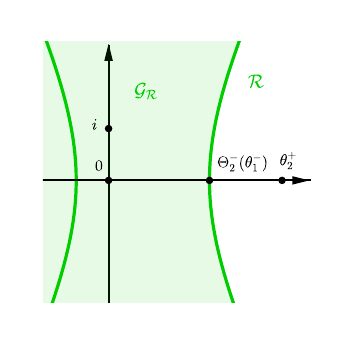}
        \caption{$\sigma_{12}>0$}
        \label{fig:gull2}
    \end{subfigure}
    \begin{subfigure}{0.3\textwidth}
        \includegraphics[scale=0.8]{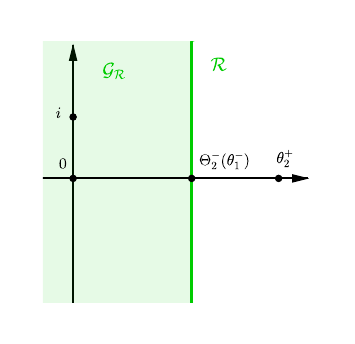}
        \caption{$\sigma_{12}=0$}
        \label{fig:tiger2}
    \end{subfigure}\quad
    \begin{subfigure}{0.3\textwidth}
\includegraphics[scale=0.8]{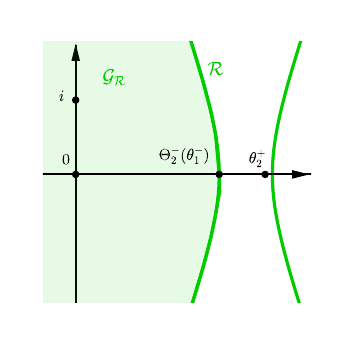}
        \caption{$\sigma_{12}<0$}
        \label{fig:mouse2}
    \end{subfigure}
\caption{\textcolor{black}{The shape (in particular the orientation) of the curve $\R$ depends directly on the sign of the covariance $\sigma_{12}$}}
\end{figure}

We denote the part of $\R$ with negative imaginary part by $\R^-$, see \eqref{eq:curve_definition_negative} and Figure \ref{fig:BVP_theta}. 
We further denote by $\G$ the open domain of $\mathbb{C}$ containing $0$ and bounded by $\R$, see again Figure~\ref{fig:BVP_theta}. The closure of $\G$ is equal to $\G\cup\R$ and will be noted $\overline{\G}$. 

\begin{lem}
\label{lem:cont}
\textcolor{black}{The domain 
\begin{equation*}
     \{\theta_2\in\mathbb{C} \setminus (\theta_2^+,\infty) : \Re\,\theta_2\leqslant 0 \text{ or } \Re\, \Theta_1^-(\theta_2) <0\},
\end{equation*}
defined in \eqref{eq:domain_continuation}, strictly contains $\overline{\G}$.}
\end{lem}

Thanks to Lemma \ref{lem:continuation}, this implies that the Laplace transform $\phi_1(\theta_2)$ can be extended meromorphically to a domain containing $\overline{\G}$.

\begin{proof}
The first way to see the above inclusion is to refer to \cite{franceschi_asymptotic_2016}, where the boundaries of the domain \eqref{eq:domain_continuation} (called $\Delta \cup \{s_0\}$ in \cite{franceschi_asymptotic_2016}) are computed, see in particular \cite[Figures 10 and 11]{franceschi_asymptotic_2016}. The technique used in \cite{franceschi_asymptotic_2016} is to use the parametrization of the zero set of the kernel \eqref{eq:def_gamma_gamma1_gamma2}.

\textcolor{black}{It is also possible to show it directly and more elementarily, as follows.
First of all, the set $\overline{\G} \cap \{\theta_2\in\mathbb{C} : \Re\, \theta_2 \leqslant 0\}$ is obviously included in the domain defined in \eqref{eq:domain_continuation}. It thus remains to show that the set $\overline{\G} \cap \{\theta_2\in\mathbb{C} : \Re\, \theta_2 > 0\}$, which is bounded by (a part of) $\R$ and (a part of) $i\mathbb{R}$, see Figure~\ref{fig:domainprolongement}, is included in the domain \eqref{eq:domain_continuation}. More specifically, we are going to prove that the latter set is a subset of
\begin{equation*}
     \{\theta_2\in\mathbb{C} \setminus (\theta_2^+,\infty) : \Re\, \Theta_1^-(\theta_2) <0\}.
\end{equation*}
First, the definition \eqref{eq:curve_definition} of $\R$ obviously implies that $\R \subset \{\theta_2\in\mathbb{C} \setminus (\theta_2^+,\infty) : \Re\, \Theta_1^-(\theta_2) <0\}$. In the same way, we notice that $i\mathbb{R}$ also belongs to that set. Indeed, for $t\in\mathbb{R}$, Equation \eqref{eq:definition_Theta_pm} yields
\begin{equation*}
     \Re \Theta_1^-(it)=\frac{1}{\sigma_{11}} \left( -\mu_1 - \Re
\sqrt{\mu_1^2+ t^2 \det \Sigma 
+2it(\mu_1\sigma_{12}-\mu_2\sigma_{11})} \right) <0,
\end{equation*}
because $\mu_1<0$ by \eqref{eq:drift_negative}, and since $\Sigma$ is a covariance matrix, $\det \Sigma >0$ and $\sigma_{11}>0$.
Then there are two cases to consider:}
\begin{itemize}
     \item\textcolor{black}{$\sigma_{12}<0$ (i.e., $0<\beta<\frac{\pi}{2}$): the set $\overline{\G} \cap \{\theta_2\in\mathbb{C} : \Re\, \theta_2 > 0\}$ is bounded, see the left picture on Figure~\ref{fig:domainprolongement}. Then the maximum principle applied to the function $\Re \Theta_1^-$ 
     implies that the image of every point of the set $\overline{\G} \cap \{\theta_2\in\mathbb{C} : \Re\, \theta_2 > 0\}$ by $\Re \Theta_1^-$ is negative.}
     \item\textcolor{black}{$\sigma_{12}\geqslant 0$ (i.e., $\frac{\pi}{2}\leqslant \beta<\pi$): it is no more possible to apply directly the maximum principle, as the set $\overline{\G} \cap \{\theta_2\in\mathbb{C} : \Re\, \theta_2 > 0\}$ is now unbounded, see the right display on Figure \ref{fig:domainprolongement}. However, to conclude it is enough to show that the image by $\Re \Theta_1^-$ of a point $re^{it}$ near to infinity and in $\overline{\G} \cap \{\theta_2\in\mathbb{C} : \Re\, \theta_2 > 0\}$ is negative.}
     
    \textcolor{black}{The asymptotic directions of $i\mathbb{R}$ (resp.\ $\R$) are $\pm \frac{\pi}{2}$ (resp.\ $\pm(\pi-\beta)$), as this comes from \eqref{eq:hyperbole} and \eqref{eq:definition_beta}. Then we prove that $\Re\Theta_1^-(re^{\pm it})<0$, for $r$ large enough and $t\in(\pi-\beta, \frac{\pi}{2})$.
For $ t\in(0,\pi)$, the formula \eqref{eq:definition_Theta_pm} gives the following limit:
\begin{equation*}
\underset{r\to\infty}{\lim}
\frac{\Theta_1^-(re^{\pm it})}{re^{\pm it}}
=
\frac{-\sigma_{12}\pm i\sqrt{\det \Sigma}}{\sigma_{11}}=\sqrt{\frac{\sigma_{22}}{\sigma_{11}}} e^{\pm i\beta}.
\end{equation*}
Taking $ t\in  (\pi-\beta, \frac{\pi}{2})$ we see that
\begin{equation*}
\Theta_1^-(re^{\pm it})\underset{r\to\infty}{\sim}
r\sqrt{\frac{\sigma_{22}}{\sigma_{11}}} e^{\pm i(t+\beta)},
\end{equation*}
and since $t+\beta \in (\pi, \frac{\pi}{2}+\beta) \subset (\pi, \frac{3\pi}{2})$ we obtain that $\Re \Theta_1^-(re^{\pm it})<0$ for $r$ large enough. We conclude the proof with the maximum principle as in the case $\sigma_{12}<0$.\qedhere}
\end{itemize}\end{proof}

\begin{figure}[hbtp]
\centering
\includegraphics[scale=0.2]{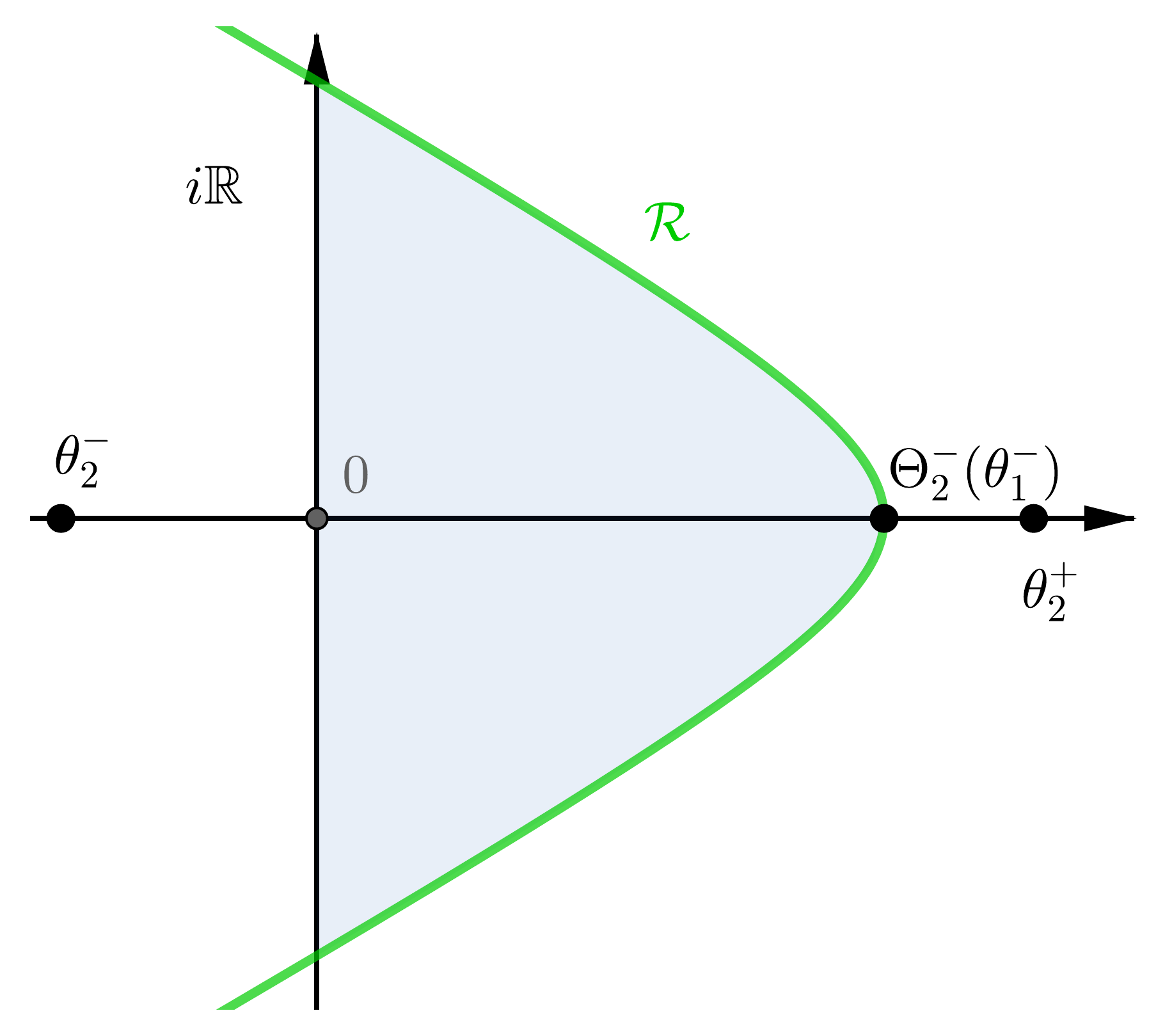}\qquad
\includegraphics[scale=0.2]{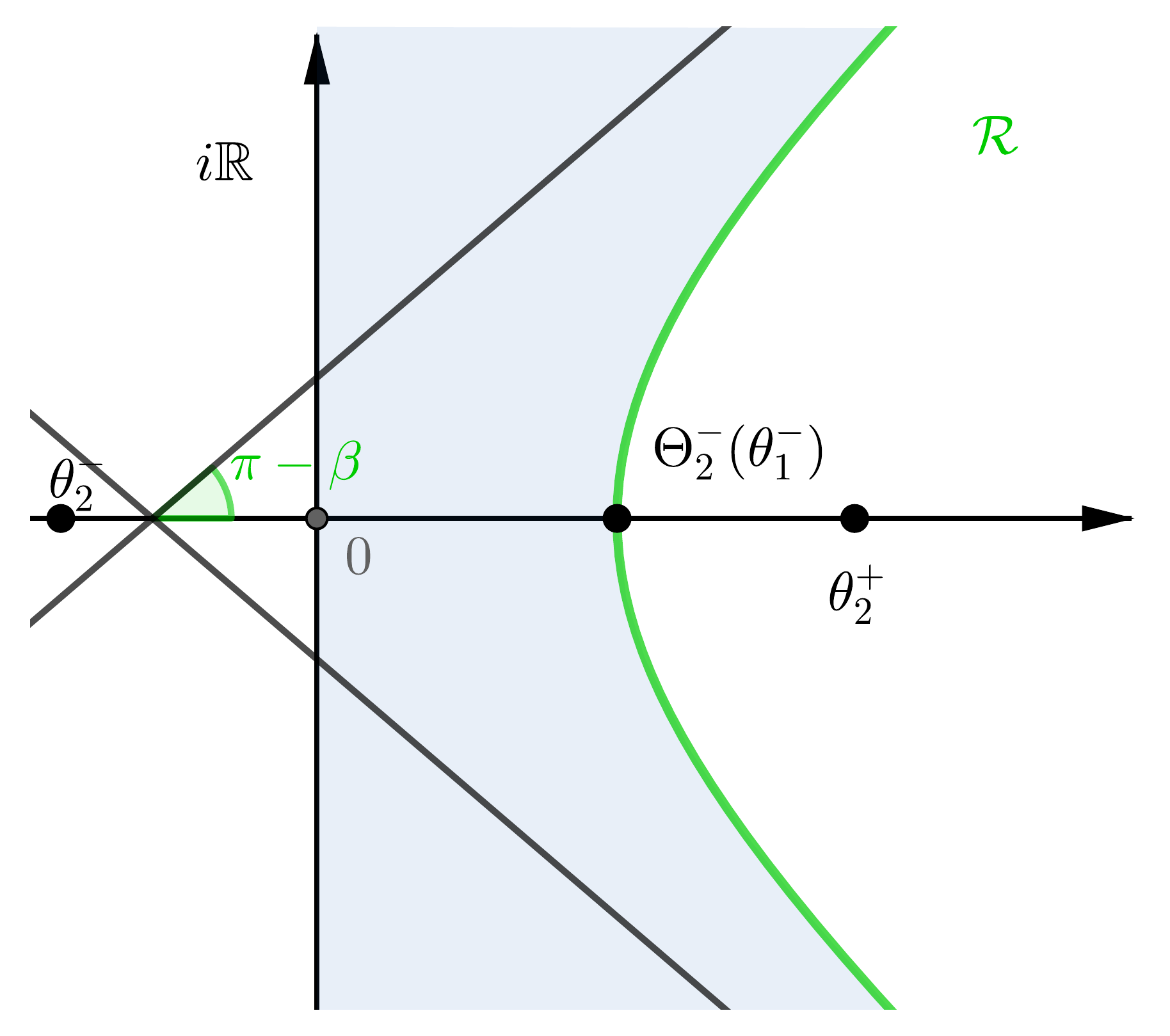}
\caption{\textcolor{black}{On the left $\sigma_{12}<0$, and on the right $\sigma_{12}\geqslant 0$. The blue domain is the set $\overline{\G} \cap \{\theta_2\in\mathbb{C} : \Re\, \theta_2 >0\}$. On the right, the two lines are the asymptotes to $\R$}}
\label{fig:domainprolongement}
\end{figure}

\begin{figure}[ht!] 
\centering
\includegraphics[scale=1]{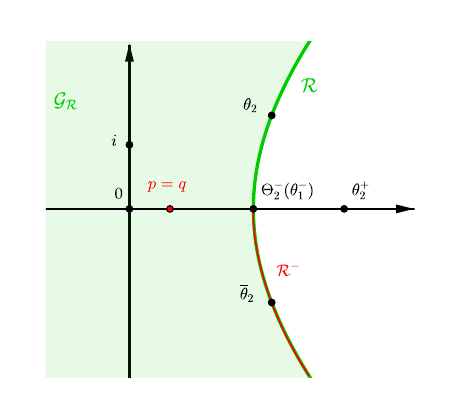}
\includegraphics[scale=1]{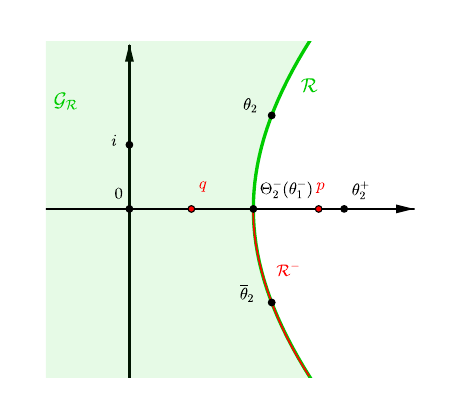}
\caption{The curve $\R$ in~\eqref{eq:curve_definition} is symmetric w.r.t.\ the horizontal axis, and $\G$ is the domain in green. The curve $\R^-$ is the half branch of $\R$ with negative imaginary part. The points $p$ and $q$ are used to define the conformal mapping $W$, see \eqref{def:definition_p} and \eqref{eq:definition_W}. \textcolor{black}{The point $q$ is defined as follows:} if the pole $p$ is in $\G$ then $q=p$ (figure on the left), otherwise $q=\frac{1}{2}\Theta_2^-(\theta_1^-)$ (figure on the right), see \eqref{eq:def_q}}
\label{fig:BVP_theta}
\end{figure}

\section{A proof of Theorem \ref{thm:main} via reduction to BVPs}
\label{sec:Riemann}

\subsection{Carleman BVP}
\label{sec:Carleman}

For $\theta_2\in \R$, define the function $G$ as in \eqref{eq:definition_G}:
\begin{equation*}
     G(\theta_2)=\frac{\gamma_1}{\gamma_2}(\Theta_1^-(\theta_2),\theta_2)\frac{\gamma_2}{\gamma_1}(\Theta_1^-(\overline{\theta_2}),\overline{\theta_2}).
\end{equation*}
Notice that $G(\theta_2)G(\overline{\theta_2})=1$ and that for $\theta_2\in \R$ one has $\Theta_1^-(\overline{\theta_2})=\Theta_1^-({\theta_2})$. Let us also recall that $p$ is defined in \eqref{def:definition_p}.

\begin{prop}[Carleman BVP with shift]
\label{prop:BVP_Carleman}
The function $\phi_1$ in \eqref{eq:Laplace_transform_boundary}
\begin{enumerate}[label={\rm (\arabic{*})},ref={\rm (\arabic{*})}]
     \item\label{item:BVPCarleman_1}
     is meromorphic on $\mathcal{G}_{\mathcal{R}}$, 
\begin{itemize}
\item without pole on $\G$ if $\gamma_1(\theta_1^-,\Theta_2^-(\theta_1^-))<0$,
\item with a single pole on $\G$ at $p$ of order one if $\gamma_1(\theta_1^-,\Theta_2^-(\theta_1^-))>0$,
\item  without pole on $\G$ and with a single pole of order one on the boundary $\R$ of $\G$, at $p=\Theta_2^-(\theta_1^-)$, if $\gamma_1(\theta_1^-,\Theta_2^-(\theta_1^-))=0$,
\end{itemize}     
     \item\label{item:BVPCarleman_2} is continuous on $\overline{\mathcal{G}_{\mathcal{R}}}\setminus \{p\}$ and bounded at infinity,
     \item\label{item:BVPCarleman_3} satisfies the boundary condition
\begin{equation} 
\label{eq:bound_cond_gen}
     \phi_1(\overline{\theta_2})=G(\theta_2)\phi_1({\theta_2}), \qquad\forall \theta_2\in \mathcal{R}.
\end{equation}
\end{enumerate}
\end{prop}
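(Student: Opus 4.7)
The plan relies on the meromorphic continuation \eqref{eq:continuation_formula} from Lemma \ref{lem:continuation} and on substituting kernel zeros into the functional equation \eqref{eq:functional_equation}.

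For item \ref{item:BVPCarleman_3}, I would start from the observation that $\mathcal{R}$ is stable under complex conjugation (the kernel has real coefficients), and that by its very definition \eqref{eq:curve_definition}, for $\theta_2 \in \mathcal{R}$ one has $\Theta_1^-(\theta_2)=\Theta_1^-(\overline{\theta_2})\in(-\infty,\theta_1^-)\subset\mathbb{R}$. Applying \eqref{eq:continuation_formula} at both $\theta_2$ and $\overline{\theta_2}$, the common factor $\phi_2(\Theta_1^-(\theta_2))$ cancels in the ratio and yields
\begin{equation*}
\frac{\phi_1(\overline{\theta_2})}{\phi_1(\theta_2)}
=\frac{(\gamma_2/\gamma_1)(\Theta_1^-(\overline{\theta_2}),\overline{\theta_2})}{(\gamma_2/\gamma_1)(\Theta_1^-(\theta_2),\theta_2)}
=G(\theta_2),
\end{equation*}
matching the definition \eqref{eq:definition_G}.

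For item \ref{item:BVPCarleman_1}, I would invoke the remark following Lemma \ref{lem:continuation} that the domain \eqref{eq:domain_continuation} strictly contains $\overline{\mathcal{G}_{\mathcal{R}}}$, so that \eqref{eq:continuation_formula} furnishes a meromorphic extension of $\phi_1$ through $\mathcal{G}_{\mathcal{R}}$. Poles in $\overline{\mathcal{G}_{\mathcal{R}}}$ can only arise from zeros of the denominator $\gamma_1(\Theta_1^-(\theta_2),\theta_2)$; substituting the linear relation $\Theta_1^-(\theta_2)=-(r_{21}/r_{11})\theta_2$ into the kernel equation shows that the only candidate zeros are $\theta_2=0$ (compensated by $\gamma_2(0,0)=0$, so $\phi_1(0)=\nu_1(\mathbb{R}_+)$ is finite) and $\theta_2=p$ from \eqref{def:definition_p}, which is simple since $\gamma_1$ is linear. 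The three cases of the statement then correspond to whether $p$, regarded as a root on the branch $\Theta_1^-$, lies inside, on, or outside $\mathcal{G}_{\mathcal{R}}$; this trichotomy is governed by the sign of $\gamma_1$ at the vertex $(\theta_1^-,\Theta_2^\pm(\theta_1^-))$ of the hyperbola, as visible on the ellipse/line picture (Figures~\ref{fig:p_ellipse} and~\ref{fig:p_ellipse_bis}).

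For item \ref{item:BVPCarleman_2}, continuity of \eqref{eq:continuation_formula} on $\overline{\mathcal{G}_{\mathcal{R}}}\setminus\{p\}$ is inherited from the continuity of each factor once the pole is isolated. Boundedness at infinity along $\mathcal{R}$ follows from the observation that $\Theta_1^-(\theta_2)\to-\infty$ along the real axis as $|\theta_2|\to\infty$ on $\mathcal{R}$: by dominated convergence $\phi_2(\Theta_1^-(\theta_2))\to 0$ (since $\nu_2$ is absolutely continuous with integrable density), while the prefactor $(\gamma_2/\gamma_1)(\Theta_1^-(\theta_2),\theta_2)$, being a ratio of linear forms, tends to a finite limit determined by the slope of the asymptote of $\mathcal{R}$. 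The main obstacle I anticipate is the case analysis in item \ref{item:BVPCarleman_1}: translating the sign of $\gamma_1(\theta_1^-,\Theta_2^\pm(\theta_1^-))$ into the precise geometric statement that $p$ lies inside, on, or outside $\mathcal{G}_{\mathcal{R}}$ requires careful bookkeeping with the two branches $\Theta_2^\pm$ and the explicit formula \eqref{eq:expression_p}, but is essentially a planar geometry exercise based on the ellipse picture already laid out in Section~\ref{subsec:functional_equation}.
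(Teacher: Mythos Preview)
Your proposal is correct and follows essentially the same route as the paper. The only cosmetic difference is in item~\ref{item:BVPCarleman_3}: you invoke the continuation formula \eqref{eq:continuation_formula} at $\theta_2$ and $\overline{\theta_2}$ and cancel the common $\phi_2$, whereas the paper evaluates the functional equation \eqref{eq:functional_equation} at $(\theta_1,\Theta_2^+(\theta_1))$ and $(\theta_1,\Theta_2^-(\theta_1))$ for $\theta_1\in(-\infty,\theta_1^-)$ and eliminates $\phi_2(\theta_1)$---but since \eqref{eq:continuation_formula} is precisely the functional equation restricted to the kernel, the two computations are the same in different notation.
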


It is worth mentioning that the condition on the sign of $\gamma_1(\theta_1^-,\Theta_2^-(\theta_1^-))$ has a clear geometric meaning: indeed, $\gamma_1(\theta_1^-,\Theta_2^-(\theta_1^-))$ is negative (resp.\ positive) if and only if the straight line corresponding to $\gamma_1$ crosses the ellipse below (resp.\ above) the ordinate $\Theta_2^-(\theta_1^-)$; see Figure \ref{fig:p_ellipse}, left (resp.\ right).

\begin{rem}
\label{rem:different_cases_singularities}
Item \ref{item:BVPCarleman_1} of Proposition \ref{prop:BVP_Carleman} shows that according to the values of the parameters, various cases exist regarding the singularities of the Laplace transform in the domain $\G$. This is the reason why there isn't a unique expression for the Laplace transform in our main Theorem~\ref{thm:main}, but two different expressions.
\end{rem}

\begin{proof}[Proof of Proposition \ref{prop:BVP_Carleman}]
First of all, it follows from Lemma \ref{lem:continuation} that $\phi_1$ is meromorphic in $\G$ and may have a pole of order one at $p$. Indeed, due to the continuation formula \eqref{eq:continuation_formula}, the only potential pole $p$ of $\phi_1$ in $\G$ should be a zero of $\gamma_1$. It is then on the real line and characterized by \eqref{def:definition_p}. Moreover, $p$ defined by \eqref{def:definition_p} is smaller than $\Theta_2^-(\theta_1^-)$ (i.e., $p\in\G$) if and only if the geometric condition $\gamma_1(\theta_1^-,\Theta_2^-(\theta_1^-))>0$ is satisfied, see Figure~\ref{fig:p_ellipse}. This demonstrates the first item of Proposition \ref{prop:BVP_Carleman}.

The second item (in particular the fact that $\phi_1$ is bounded at infinity)\ comes from Lemma~\ref{lem:continuation} together with the fact that \eqref{eq:Laplace_transform_boundary} implies that $\phi_1$ (resp.\ $\phi_2$) is bounded on the set $\{\theta_2 \in \mathbb{C} : \Re\,\theta_2 \leqslant 0\}$ (resp.\ $\{\theta_2 \in \mathbb{C} : \Re\,\theta_1 \leqslant 0\}$). 

To prove the boundary condition \eqref{eq:bound_cond_gen} (that we announced in \cite[Proposition 7]{franceschi_tuttes_2016}), we consider $\theta_1$ such that ${\Re}\,\theta_1 <0$, and evaluate the functional equation \eqref{eq:functional_equation} at $(\theta_1,\Theta_2^\pm(\theta_1))$. This implies
\begin{equation*}
     \frac{\gamma_1}{\gamma_2}(\theta_1,\Theta_2^\pm(\theta_1))\phi_1(\Theta_2^\pm(\theta_1))+\phi_2(\theta_1)=0,
\end{equation*}
which in turn yields
\begin{equation}
\label{eq:equality_+_-}
     \frac{\gamma_1}{\gamma_2}(\theta_1,\Theta_2^+(\theta_1))\phi_1(\Theta_2^+(\theta_1))
     =
     \frac{\gamma_1}{\gamma_2}(\theta_1,\Theta_2^-(\theta_1))\phi_1(\Theta_2^-(\theta_1)).
\end{equation}
Restricting \eqref{eq:equality_+_-} to values of $\theta_1\in (-\infty,\theta_1^-)$, for which $\Theta_2^+ (\theta_1)$ and $\Theta_2^-(\theta_1)$ are complex conjugate (see Section \ref{sec:kernel}), \textcolor{black}{noting $\theta_2=\Theta_2^-(\theta_1) \in\R$ and noticing that $\theta_1=\Theta_1^-(\theta_2)$,} we reach the conclusion that
\begin{equation*}
     \phi_1(\overline{\theta_2})
     =
     \frac{\gamma_1}{\gamma_2}(\theta_1,\theta_2)\frac{\gamma_2}{\gamma_1}(\theta_1,\overline{\theta_2})\phi_1({\theta_2}),
\end{equation*}
which, by definition \eqref{eq:definition_G} of $G$, exactly coincides with the boundary condition \eqref{eq:bound_cond_gen}. \textcolor{black}{Although we do not exclude a priori the denominators in \eqref{eq:equality_+_-} to vanish, note that this does not happen for $\theta_1\in(-\infty,\theta_1)$ since then the imaginary part of $\Theta_2^\pm(\theta_1)$ is non-zero.}
\end{proof}

The BVP established in Proposition \ref{prop:BVP_Carleman} belongs to the class of homogeneous Carleman (or Riemann-Carleman) BVPs with shift, see \cite{litvinchuk_solvability_2000}, the shift being here the complex conjugation. 

In some cases, the function $G$ in \eqref{eq:bound_cond_gen} can be factorized, leading to an interesting particular case, that we comment below. \textcolor{black}{As we shall see, the well-known skew-symmetric condition
\begin{equation}
\label{eq:skew-symmetric}
     2\Sigma = R\cdot \text{diag}(R)^{-1}\cdot \text{diag} (\Sigma)+\text{diag} (\Sigma)\cdot\text{diag}(R)^{-1}\cdot R^{\top}
\end{equation}
 (equivalent for the stationary distribution $\pi(x_1,x_2)$ to have a product-form) gives a family of examples where such a factorization holds.
In \eqref{eq:skew-symmetric} we have noted $\text{diag} (A)$ the diagonal matrix with the same diagonal coefficients as those of $A$.}

\begin{rem}
\label{rem:decoupling}
If there exists a rational function $F$ such that 
\begin{equation*}
     G(\theta_2)=\frac{F(\theta_2)}{F(\overline{\theta_2})},
\end{equation*}
one can transform the boundary condition~\eqref{eq:bound_cond_gen} for $\phi_1$ with $G\neq1$ into a boundary condition for $\phi_1 \cdot F$ with $G=1$, namely,
\begin{equation*}
     (\phi_1 \cdot F)(\overline{\theta_2})=(\phi_1 \cdot F)({\theta_2}).
\end{equation*}
Then the associated BVP \textcolor{black}{should be solvable} using Tutte's invariants \cite{bernardi_counting_2015}. This is what has been done in \cite{franceschi_tuttes_2016}, for the particular case of orthogonal reflections (corresponding to $F(\theta_2)=\frac{1}{\theta_2}$). 

\textcolor{black}{We show in Section \ref{subsec:skew} that such a rational function $F$ always exists in the skew-symmetric case \eqref{eq:skew-symmetric}, and from this we derive a rational expression of the Laplace transform.}

The existence of a rational function $F$ factorizing $G$ as above is reminiscent of the notion of decoupling function or telescoper, introduced in \cite{bernardi_counting_2015,Dreyfus}.
\end{rem}

However, a rational factorization term $F$ as in Remark \ref{rem:decoupling} does not exist in general, and it is still an open problem to characterize the parameters $(\Sigma,\mu,R)$ for which $F$ exists. As a consequence, we cannot systematically use Tutte's invariants technique: we are left with transforming the BVP of Proposition \ref{prop:BVP_Carleman} into a more classical one, using a certain conformal mapping having a very convenient gluing property.

\subsection{Conformal gluing}
\label{sec:gluing}

Our main result in this section is to prove that the function $W$ defined by \textcolor{black}{Equation \eqref{eq:definition_W} below} satisfies the properties of Lemma \ref{lem:conformal_gluing}, allowing to transform the Carleman BVP with shift on the curve $\R$ of Proposition \ref{prop:BVP_Carleman} into a classical BVP on the segment $[0,1]$.

\begin{figure}[hbtp]
\centering
\includegraphics[scale=0.9]{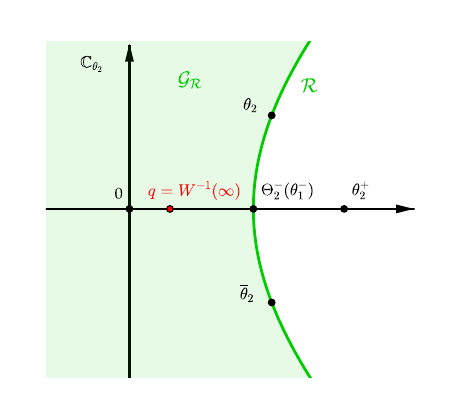}
\hspace{-1.6cm}
\includegraphics[scale=0.9]{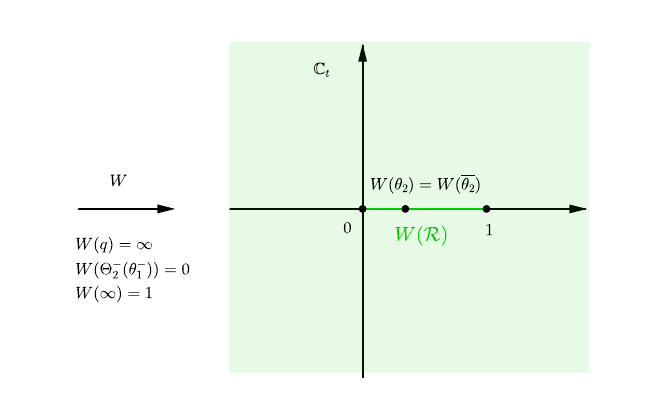}
\caption{Domains, curves and points related to the Carleman BVP with shift on $\R$ (left) and the standard BVP  on $[0,1]$ (right)}
\label{fig:different_levels}
\end{figure}

First we need to define $q$ by
\begin{equation}
\label{eq:def_q}
     q=\left\{\begin{array}{ll}
     p & \text{if } \gamma_1(\theta_1^-,\Theta_2^-(\theta_1^-))>0, \text{ i.e., if $\phi_1$ admits $p\in\G$ as a pole},\smallskip\\
     \frac{1}{2}\Theta_2^-(\theta_1^-) & \text{otherwise}.
     \end{array}\right. 
\end{equation}     
Note, the choice $\frac{1}{2}\Theta_2^-(\theta_1^-)$ is \textcolor{black}{arbitrary}: any point in $\G$ would have been suitable. See Figures~\ref{fig:p_ellipse}, \ref{fig:BVP_theta} \textcolor{black}{and \ref{fig:different_levels}}. \textcolor{black}{In the case where condition \eqref{eq:drift_negative} is not satisfied, $\Theta_2^-(\theta_1^-)$ may be negative and another choice for $q$ should be done (as for example $\Theta_2^-(\theta_1^-)-1$), see Section \ref{subsec:generalizations}.}

The function $W$ is built on the function $w$ below (note, $w$ is introduced in \cite[Theorem 1]{franceschi_tuttes_2016}; under the symmetry conditions $\mu_1 = \mu_2$, $\sigma_{11}=\sigma_{22}$, and symmetric reflection vectors in \eqref{eq:RBMQP}, Foschini \cite{Foschini} also obtained an expression for the conformal mapping $w$, see \cite[Figure 3]{Foschini}; see finally \cite[Equation (4.6)]{Baccelli_Fayolle_1987} for a related formulation of $w$):
\begin{equation*}
     w(\theta_2)=T_{\frac{\pi}{\beta}}\left(-\frac{2\theta_2-(\theta_2^+ +\theta_2^-)}{\theta_2^+ -\theta_2^-}\right),
\end{equation*}
which itself uses the branch points \eqref{eq:definition_theta_pm}, the generalized Chebyshev polynomial \eqref{eq:Chebyshev_polynomial} and the angle
\begin{equation}
     \beta= \arccos {-\frac{\sigma_{12}}{\sqrt{\sigma_{11}\sigma_{22}}}}.
\end{equation} 
\textcolor{black}{By \cite[Section 4.2]{franceschi_tuttes_2016}, $w$ is analytic on the cut plane $\mathbb C\setminus [\theta_2^+,\infty)$.} Then we define
\begin{equation}
\label{eq:definition_W}
     W(\theta_2)=\frac{w(\theta_2)-w(\Theta_2^-(\theta_1^-))}{w(\theta_2)-w(q)}=\frac{w(\theta_2)+1}{w(\theta_2)-w(q)}.
\end{equation}
\textcolor{black}{The last equality is due to the identity 
\begin{equation*}
     \Theta_2^-(\theta_1^-)=\frac{\theta_2^+ +\theta_2^-}{2}-\frac{\theta_2^+ -\theta_2^-}{2} \cos \beta. 
\end{equation*}
We can see it by a direct computation or using a uniformization of the zero set of the kernel, see \cite[Section 5]{franceschi_tuttes_2016} for more details. Then we have $w(\Theta_2^-(\theta_1^-))= T_{\frac{\pi}{\beta}}(\cos \beta)=\cos \pi =-1$.}

\begin{lem}
\label{lem:conformal_gluing}
The function $W$ in \eqref{eq:definition_W}
\begin{enumerate}[label={\rm (\roman{*})},ref={\rm (\roman{*})}]
     \item\label{item:conformal_1} is analytic in $\G \setminus\{q\}$, continuous in $\overline{\G}\setminus\{q\}$ and bounded at infinity,
     \item\label{item:conformal_2} is one-to-one from $\G\setminus\{q\}$ onto $\mathbb{C}\setminus[0,1]$,
     \item\label{item:conformal_3} satisfies  $W(\theta_2)=W(\overline{\theta_2})$ for all $\theta_2\in\mathcal{R}$.
\end{enumerate}
\end{lem}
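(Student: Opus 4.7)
The strategy is to write $W=M\circ w$ with $M(z)=(z+1)/(z-w(q))$, so that the problem reduces to transferring the conformal gluing properties of $w$ to $W$ via the M\"obius map $M$. The relevant properties of $w$ I would invoke are: (a) $w$ is analytic on $\G$, continuous on $\overline{\G}$, and $|w(\theta_2)|\to\infty$ as $|\theta_2|\to\infty$ within $\overline{\G}$; (b) $w$ realizes a biholomorphism from $\G$ onto $\mathbb{C}\setminus[-1,+\infty)$, with the vertex $\Theta_2^\pm(\theta_1^-)$ of $\R$ mapping to $-1$ and $\R$ mapping onto $[-1,+\infty)$; (c) $w(\theta_2)=w(\overline{\theta_2})$ for every $\theta_2\in\R$. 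These are statements about the kernel alone (they depend only on $\gamma$, $\R$ and $\G$, hence only on $\Sigma$ and $\mu$), and they are precisely the content of \cite[Theorem 1]{franceschi_tuttes_2016} (see also the earlier \cite{Foschini,Baccelli_Fayolle_1987} for special cases).

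The M\"obius $M$ sends $-1\mapsto 0$, $\infty\mapsto 1$ and $w(q)\mapsto\infty$. The definition \eqref{eq:def_q} places $q\in\G\cap\mathbb{R}$, and since $w$ takes real values on $\G\cap\mathbb{R}$, $w(q)$ is real; together with $w(q)\in w(\G)=\mathbb{C}\setminus[-1,+\infty)$ from (b), this forces $w(q)<-1$. The derivative $M'(z)=-(w(q)+1)/(z-w(q))^2$ is therefore strictly positive on $\mathbb{R}\setminus\{w(q)\}$, so $M$ is an increasing bijection from the closed interval $[-1,+\infty]\subset\widehat{\mathbb{R}}$ onto $[0,1]$ and, consequently, a bijection from $\widehat{\mathbb{C}}\setminus\bigl([-1,+\infty]\cup\{w(q)\}\bigr)$ onto $\mathbb{C}\setminus[0,1]$.

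All three items of the lemma then follow by composition. For \ref{item:conformal_1}, by the injectivity in (b) the only zero of $w-w(q)$ in $\overline{\G}$ is $q$ itself (since $w(q)\notin[-1,+\infty)=w(\partial\G)$), so $W=M\circ w$ is analytic on $\G\setminus\{q\}$ and continuous on $\overline{\G}\setminus\{q\}$; boundedness at infinity amounts to $\lim_{\theta_2\to\infty}W(\theta_2)=M(\infty)=1$. For \ref{item:conformal_2}, composing the biholomorphism of (b), restricted to $\G\setminus\{q\}$, with the M\"obius bijection $M$ described above yields the claimed bijection $\G\setminus\{q\}\to\mathbb{C}\setminus[0,1]$. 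For \ref{item:conformal_3}, for $\theta_2\in\R$ one has
\begin{equation*}
W(\overline{\theta_2})=M(w(\overline{\theta_2}))=M(w(\theta_2))=W(\theta_2),
\end{equation*}
directly from (c).

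The main obstacle is justifying (b), which is not at all manifest from the defining formula for $w$ (an affine map composed with the generalized Chebyshev function $T_{\pi/\beta}$ of fractional order). The cleanest argument factors $w$ through a uniformization of the elliptic curve $\{\gamma=0\}$, under which the Galois involution swapping the two sheets $\Theta_2^\pm$ becomes a half-period translation and $w$ becomes transparently Chebyshev-periodic; this is carried out in \cite[Section 5]{franceschi_tuttes_2016}, and I would invoke it directly rather than redo it.
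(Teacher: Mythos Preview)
Your approach is essentially identical to the paper's: both factor $W=M\circ w$ with $M(z)=(z+1)/(z-w(q))$, invoke the conformal gluing properties of $w$ established in \cite{franceschi_tuttes_2016}, and transfer them through the M\"obius map. The paper's proof is in fact terser than yours---it simply records the three properties of $w$ and says the lemma follows.

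There is, however, a concrete error in your property (b): the image of $w$ is $\mathbb{C}\setminus(-\infty,-1]$, not $\mathbb{C}\setminus[-1,+\infty)$ (this is \cite[Lemma~6]{franceschi_tuttes_2016}, quoted as item (ii') in the paper's proof). The boundary $\R$ is mapped by $w$ onto $(-\infty,-1]$, with the vertex going to $-1$ and $w(\theta_2)\to-\infty$ as $\theta_2\to\infty$ along $\R$. This reverses several of your intermediate claims: since $q\in\G$ one has $w(q)>-1$ (not $<-1$), hence $M'(z)=-(w(q)+1)/(z-w(q))^2<0$ on $\mathbb{R}\setminus\{w(q)\}$, and $M$ is a \emph{decreasing} bijection from $[-\infty,-1]$ onto $[0,1]$. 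The global structure of your argument is unaffected---$M$ still carries $w(\G)\setminus\{w(q)\}$ bijectively onto $\mathbb{C}\setminus[0,1]$---but the signs and the half-line in the middle of the proof need to be flipped.
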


\begin{proof}
It can be found in \cite[Lemma 6]{franceschi_tuttes_2016} that $w$ in \eqref{eq:definition_w}
\begin{enumerate}[label={\rm (\roman{*}')},ref={\rm (\roman{*}')}]
     \item\label{item:conformal_1'} is analytic in $\G$, continuous in $\overline{\G}$ and unbounded at infinity ($T_a$ admits an analytic continuation on $\mathbb C\setminus \textcolor{black}{(\theta_2^+,\infty)}$, and even on $\mathbb C$ if $a$ is a non-negative integer: \textcolor{black}{in that case $T_a$ is the classical Chebyshev polynomial of the first kind}), 
     \item\label{item:conformal_2'} is one-to-one from $\G$ onto $\mathbb{C}\setminus(-\infty,-1]$,
     \item\label{item:conformal_3'} satisfies $w(\theta_2)=w(\overline{\theta_2})$ for all $\theta_2\in\mathcal{R}$.
\end{enumerate}
Here we want to define another conformal gluing function, which glues together the upper part and the lower part of the hyperbola onto the segment $[0,1]$, and which sends the point $q$ in \eqref{eq:def_q} at infinity, see Figure \ref{fig:different_levels}. For this reason we set $W$ as in \eqref{eq:definition_W}: by construction $W(\Theta_2^-(\theta_1^-))=0$, $W(\infty)=1$ and $W(q)=\infty$. 
%
The proof of Lemma \ref{lem:conformal_gluing} follows from the above-mentioned properties \ref{item:conformal_1'}--\ref{item:conformal_3'} of $w$ together with the definition \eqref{eq:definition_W} of $W$. 
\end{proof}

\begin{rem}
\label{eq:algebraic_nature_w_W}
The algebraic nature of the mapping $w$ in \eqref{eq:definition_w} (or equivalently $W$ in \eqref{eq:definition_W}) is directly related to the rationality of $\frac{\beta}{\pi}$. Precisely, as shown in \cite[Proposition 13]{franceschi_tuttes_2016}, the following behaviors are possible:
\begin{itemize}
     \item \textcolor{black}{The function $w$ is algebraic if and only if $\frac{\beta}{\pi}\in\mathbb Q$;}
     \item If in addition $\frac{\pi}{\beta}\in\mathbb N$ (and only in this case), then $w$ is a polynomial.
\end{itemize}     
\end{rem}

\subsection{Reduction to a standard BVP}
\label{sec:Riemann_BVP}

Thanks to the gluing function $W$ in \eqref{eq:definition_W}, we are able to reformulate the Carleman BVP as a standard BVP for an open contour. See Figure \ref{fig:different_levels} for a compact view of the two complex planes associated to the Carleman's and Riemann's BVPs. Define $\psi_1$ by 
\begin{equation} 
\label{eq:expression_psi_1}
     \psi_1 (t)=\phi_1 \circ W^{-1} (t),\qquad \forall t\in\mathbb{C}\setminus [0,1]
\end{equation}
(note, $\psi_1$ is meromorphic on $\mathbb{C}\setminus [0,1]$). Equivalently we have $\phi_1 (\theta_2)=\psi_1 \circ W (\theta_2)$ for $\theta_2\in \G$. Obviously $W^{-1}$ is not well defined on $[0,1]$; however, it does admit upper and lower limits for $t\in [0,1]$:
\begin{equation*}
     (W^{-1})^+(t) =\lim_{\substack{u\to t\\\Im u>0}}W^{-1}(u),\qquad
     (W^{-1})^-(t)  =\lim_{\substack{u\to t\\\Im u<0}}W^{-1}(u),
\end{equation*}
and similarly for $\psi_1^+(t)$ and $\psi_1^-(t)$. Then for $\theta_2 \in \mathcal{R}$ and $t=W(\theta_2)=W(\overline{\theta_2})$, we have 
\begin{equation*}
\phi_1({\theta_2})=
\left\{\begin{array}{ll}
\psi_1^+(t) & \text{if } \Im\,\theta_2>0, \smallskip\\
\psi_1^-(t) & \text{if } \Im\,\theta_2<0,
\end{array}\right.\qquad
\phi_1(\overline{\theta_2})=
\left\{\begin{array}{ll}
\psi_1^-(t) & \text{if } \Im\,\theta_2>0, \smallskip\\
\psi_1^+(t) & \text{if } \Im\,\theta_2<0.
\end{array}\right.
\end{equation*}
Define further
\begin{equation} 
\label{def:H}
     H(t)= G((W^{-1})^-(t)),\qquad \forall t\in[0,1].
\end{equation}
Then Proposition \ref{prop:BVP_Carleman} becomes:
\begin{prop}[Riemann BVP]
\label{prop:BVP_Riemann}
The function $\psi_1$ in \eqref{eq:expression_psi_1}
\begin{enumerate}[label={\rm (\arabic{*})},ref={\rm (\arabic{*})}]
     \item\label{item:BVPRiemann1}
      is analytic in $\mathbb{C}\setminus [0,1]$, bounded at infinity if $\gamma_1(\theta_1^-,\Theta_2^-(\theta_1^-))\leqslant 0$ 
     and admitting a simple pole at infinity otherwise,
     \item\label{item:BVPRiemann2} is continuous on $[0,1]$ from below (with limits $\psi_1^-$) and above (with limits $\psi_1^+$),
bounded at $0$ 
and $1$ 
(except if $\gamma_1(\theta_1^-,\Theta_2^-(\theta_1^-)) = 0$: in this case it has a pole of order one at $0$),
     \item\label{item:BVPRiemann3} satisfies, with $H$ defined in \eqref{def:H}, the boundary condition
\begin{equation} 
\label{eq:condition_Riemann_boundary}
     \psi_1^+(t)=H(t)\psi_1^-(t),\qquad \forall t\in[0,1].
\end{equation}
\end{enumerate}
\end{prop}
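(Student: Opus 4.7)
My plan is to push each property of $\phi_1$ established in Proposition \ref{prop:BVP_Carleman} through the conformal change of variable $W$, using the mapping properties from Lemma \ref{lem:conformal_gluing}. Since $W^{-1}$ is analytic on $\mathbb{C}\setminus[0,1]$ with $W^{-1}(\infty)=q$, the composition $\psi_1=\phi_1\circ W^{-1}$ is automatically meromorphic on $\mathbb{C}\setminus[0,1]$, and its only possible singularity in this domain comes from the (at most simple) pole of $\phi_1$ at $p$.

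For item \ref{item:BVPRiemann1}, I would treat the three cases of Proposition \ref{prop:BVP_Carleman}\ref{item:BVPCarleman_1} in turn. If $\gamma_1(\theta_1^-,\Theta_2^\pm(\theta_1^-))>0$, then $q=p$ by \eqref{eq:def_q}, and the simple pole of $\phi_1$ at $p\in\mathcal{G}_{\mathcal{R}}$ is sent to $\infty$ by $W$, giving $\psi_1$ a simple pole at infinity. If $\gamma_1(\theta_1^-,\Theta_2^\pm(\theta_1^-))<0$, then $\phi_1$ is holomorphic on $\mathcal{G}_{\mathcal{R}}$, so $\psi_1$ is analytic on $\mathbb{C}\setminus[0,1]$ with $\psi_1(\infty)=\phi_1(q)$ finite. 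In the boundary case $\gamma_1(\theta_1^-,\Theta_2^\pm(\theta_1^-))=0$, $\phi_1$ is again holomorphic on $\mathcal{G}_{\mathcal{R}}$, so $\psi_1$ is analytic on $\mathbb{C}\setminus[0,1]$ and still bounded at infinity by the same argument.

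For item \ref{item:BVPRiemann2}, the existence of continuous limits $\psi_1^\pm(t)$ on $[0,1]$ follows from the continuity of $\phi_1$ on $\overline{\mathcal{G}_{\mathcal{R}}}\setminus\{p\}$ (Proposition \ref{prop:BVP_Carleman}\ref{item:BVPCarleman_2}), since $W$ extends continuously to each of the two branches of $\mathcal{R}$ and, by Lemma \ref{lem:conformal_gluing}, sends them to $[0,1]$ approached from above and below respectively. Boundedness at $t=1$ is a direct consequence of $W(\infty)=1$ and of the boundedness of $\phi_1$ at infinity. At $t=0$ I would use the identity $W(\Theta_2^\pm(\theta_1^-))=0$ noted in Section \ref{sec:gluing}: $\psi_1$ inherits the local behavior of $\phi_1$ at $\Theta_2^\pm(\theta_1^-)$, hence is bounded when $\gamma_1(\theta_1^-,\Theta_2^\pm(\theta_1^-))\neq 0$ and exhibits a simple pole precisely in the limit case $\gamma_1(\theta_1^-,\Theta_2^\pm(\theta_1^-))=0$, where $p=\Theta_2^\pm(\theta_1^-)$ is a first-order pole of $\phi_1$ sitting on the boundary of $\mathcal{G}_{\mathcal{R}}$.

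For item \ref{item:BVPRiemann3}, I would take $\theta_2\in\mathcal{R}$ with $\Im\,\theta_2>0$ and rewrite the Carleman identity \eqref{eq:bound_cond_gen} using the dictionary displayed just before the statement: at $t=W(\theta_2)=W(\overline{\theta_2})$, one has $\phi_1(\theta_2)=\psi_1^+(t)$ and $\phi_1(\overline{\theta_2})=\psi_1^-(t)$, so \eqref{eq:bound_cond_gen} yields $\psi_1^-(t)=G(\theta_2)\,\psi_1^+(t)$. Since $\overline{\theta_2}\in\mathcal{R}^-$ corresponds to $(W^{-1})^-(t)$, the identity $G(\theta_2)G(\overline{\theta_2})=1$ noted right after \eqref{eq:definition_G} gives $G(\theta_2)=1/H(t)$ with $H$ as in \eqref{def:H}, and rearranging produces exactly \eqref{eq:condition_Riemann_boundary}. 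The only delicate point in the whole argument is this orientation bookkeeping on the two sides of the cut $[0,1]$; the symmetry $G(\theta_2)G(\overline{\theta_2})=1$ makes the resulting identification essentially automatic and ensures that $H$ rather than $1/H$ appears in the final boundary condition.
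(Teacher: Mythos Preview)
Your proposal is correct and follows essentially the same approach as the paper: items \ref{item:BVPRiemann1} and \ref{item:BVPRiemann2} are deduced from the corresponding items of Proposition~\ref{prop:BVP_Carleman} by transporting the pole/boundedness data through $W$ (using $W(q)=\infty$, $W(\infty)=1$, $W(\Theta_2^\pm(\theta_1^-))=0$), and item \ref{item:BVPRiemann3} is obtained from the Carleman condition \eqref{eq:bound_cond_gen} via the dictionary $\phi_1(\theta_2)=\psi_1^\pm(t)$ together with $G(\theta_2)G(\overline{\theta_2})=1$. The paper's own proof is terser (it simply asserts that \ref{item:BVPRiemann1}--\ref{item:BVPRiemann2} ``directly follow'' and then does exactly your orientation bookkeeping for \ref{item:BVPRiemann3}), so your write-up is just a more explicit rendering of the same argument.
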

\begin{proof}
Items \ref{item:BVPRiemann1} and \ref{item:BVPRiemann2} directly follow from the corresponding items in Proposition \ref{prop:BVP_Carleman}. 
With the above definitions, the boundary equation \eqref{eq:bound_cond_gen} becomes
\begin{equation*}
     \left\{\begin{array}{ll}
     \psi_1^- (t)= G(\theta_2)\psi_1^+(t)  
&\text{if } \Im\,\theta_2
>0,\smallskip\\
\psi_1^+ (t)= G(\theta_2)\psi_1^-(t)
 &\text{if } \Im\,\theta_2
<0.
\end{array}\right.
\end{equation*}
Since
$
\frac{1}{G(\theta_2)}=G(\overline{\theta_2})=
H(t)$ if $\Im\,\theta_2>0$, and $G({\theta_2})=
H(t)$ if $\Im\,\theta_2<0$, the last item follows.
\end{proof}

\subsection{Index of the BVP}
\label{sec:index}

The resolution of BVPs as in Proposition \ref{prop:BVP_Riemann} heavily depends on the \textit{index} $\chi$ (see, e.g., \cite[Section 5.2]{litvinchuk_solvability_2000}), which is related to the variation of argument of $H$ on $[0,1]$:
\begin{equation}
\label{eq:def_index}
     \Delta=[\arg H ]_0^1, \qquad
     {d}=\arg H(0) \in (-\pi,\pi], \qquad
     \chi = \left\lfloor \frac{{d}+\Delta}{2\pi}\right\rfloor.
\end{equation}
$\Delta$ quantifies the variation of argument of $H$ on $[0,1]$ and $\arg H(1)={d}+\Delta$. Since $(W^{-1})^- ([0,1])=\R^-$, $\Delta$ in \eqref{eq:def_index} can be equivalently written $[\arg G]_{\R^-}$ (from the vertex to infinity). 

\begin{rem}
\label{rem:delta_arbitrary}
It is important to notice that ${d}\in(-\pi,\pi]$ in \eqref{eq:def_index} corresponds to an arbitrary choice. Any other choice would eventually lead to the same Theorem \ref{thm:main} (though written slightly differently).
\end{rem}

First, we compute ${d}$ in \eqref{eq:def_index}.
\begin{lem}
\label{lem:delta}
We have
\begin{equation}
\label{eq:definition_delta}
{d}=
     \left\{\begin{array}{ll}
     0 
&\text{if } \gamma_1(\theta_1^-,\Theta_2^-(\theta_1^-)) \neq 0
,\smallskip\\
\pi
&\text{if }
\gamma_1(\theta_1^-,\Theta_2^-(\theta_1^-)) = 0.
\end{array}\right.
\end{equation}
The angle ${d}+\Delta\in (-2\pi,2\pi)$ and we have
\begin{equation}
\label{eq:tan_Delta}
     \tan\frac{{d}+\Delta}{2} = \frac{\det R\cdot \textcolor{black}{\sqrt{\det\Sigma}} }{\sigma_{12}(r_{11}r_{22}+r_{12}r_{21})-\sigma_{22}r_{11}r_{12}-\sigma_{11}r_{22}r_{21}}.
\end{equation}
\end{lem}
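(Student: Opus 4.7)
The starting observation is that on the hyperbola $\mathcal R$ the definition \eqref{eq:definition_G} is very symmetric. Since $\Theta_1^-(\theta_2)$ lies in the real interval $(-\infty,\theta_1^-]$ for $\theta_2\in\mathcal R$, we have $\Theta_1^-(\overline{\theta_2})=\Theta_1^-(\theta_2)$, and the real coefficients of $\gamma_1,\gamma_2$ give $\gamma_i(\Theta_1^-(\overline{\theta_2}),\overline{\theta_2})=\overline{\gamma_i(\Theta_1^-(\theta_2),\theta_2)}$. Setting $A(\theta_2):=(\gamma_1/\gamma_2)(\Theta_1^-(\theta_2),\theta_2)$, the definition of $G$ rewrites as $G=A/\overline{A}$ on $\mathcal R$. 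In particular $|G|\equiv1$, and $\arg G=2\arg A$ wherever $A$ is nonzero and finite. Through the conformal gluing (Lemma~\ref{lem:conformal_gluing}) the endpoints $0$ and $1$ of $[0,1]$ correspond, respectively, to the vertex $\theta_2^\star:=\Theta_2^\pm(\theta_1^-)$ and to infinity along $\mathcal R^-$, so computing $\delta$ and $\delta+\Delta$ reduces to evaluating $2\arg A$ at these two points along a continuous branch of $\arg$.

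For \eqref{eq:definition_delta}: the point $\theta_2^\star$ is real, so $A(\theta_2^\star)$ is real whenever defined and nonzero, giving $G=1$ and $\delta=0$. The condition $\det R>0$ precludes simultaneous vanishing of $\gamma_1$ and $\gamma_2$ at $(\theta_1^-,\theta_2^\star)\ne(0,0)$, so the only degenerate case is $\gamma_1(\theta_1^-,\theta_2^\star)=0$. I would then perform a Puiseux-type expansion of $\mathcal R^-$ at the vertex: for $\theta_1=\theta_1^--h$, $h\downarrow 0^+$, the linear vanishing at $\theta_1^-$ of the discriminant $f$ of $\Theta_2^\pm$ gives
\begin{equation*}
\theta_2-\theta_2^\star\sim-\frac{i}{\sigma_{22}}\sqrt{f'(\theta_1^-)h},\qquad \gamma_1(\Theta_1^-(\theta_2),\theta_2)\sim-\frac{ir_{21}}{\sigma_{22}}\sqrt{f'(\theta_1^-)h}.
\end{equation*}
The conjugate point yields the opposite sign, while the $\gamma_2$-factors remain finite and nonzero; the ratio defining $G$ thus tends to $-1$, giving $\delta=\pi$.

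For $\delta+\Delta$, I would analyze the infinite asymptote of $\mathcal R^-$. Setting $\theta_1=-s$, $s\to+\infty$, and using $\sqrt{|f(-s)|}\sim s\sqrt{\det\Sigma}$ together with the identity $\sigma_{12}-i\sqrt{\det\Sigma}=-\sqrt{\sigma_{11}\sigma_{22}}\,e^{i\beta}$ (a direct reformulation of \eqref{eq:definition_beta}), one obtains $\theta_2\sim-s\sqrt{\sigma_{11}/\sigma_{22}}\,e^{i\beta}$. Substituting into $\gamma_1,\gamma_2$ yields
\begin{equation*}
A(\infty)=\frac{N}{D},\quad N:=r_{11}+r_{21}\sqrt{\tfrac{\sigma_{11}}{\sigma_{22}}}\,e^{i\beta},\quad D:=r_{12}+r_{22}\sqrt{\tfrac{\sigma_{11}}{\sigma_{22}}}\,e^{i\beta},
\end{equation*}
so that $G(\infty)=N\overline{D}/\overline{N\overline{D}}=e^{2i\arg(N\overline{D})}$ and $\tan((\delta+\Delta)/2)=\mathrm{Im}(N\overline{D})/\mathrm{Re}(N\overline{D})$. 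A direct expansion of $N\overline{D}$ followed by the substitutions $\cos\beta=-\sigma_{12}/\sqrt{\sigma_{11}\sigma_{22}}$, $\sin\beta=\sqrt{\det\Sigma}/\sqrt{\sigma_{11}\sigma_{22}}$ and clearing by $\sigma_{22}$ produce the right-hand side of \eqref{eq:tan_Delta}.

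The most delicate point is the localization $\delta+\Delta\in(-2\pi,2\pi)$, since the tangent formula only determines $\delta+\Delta$ modulo $\pi$. I would handle this by a continuity/winding analysis of $\arg A$ along $\mathcal R^-$: the map $\theta_1\in(-\infty,\theta_1^-]\mapsto A(\theta_1,\Theta_2^-(\theta_1))$ is continuous and $\mathbb C^\ast$-valued (the absence of zeros and poles on $\mathcal R^-$, outside the possibly degenerate vertex, follows from the geometric position of the lines $\gamma_1=0$ and $\gamma_2=0$ relative to the ellipse, cf.\ Figure~\ref{fig:p_ellipse}, together with $\det R>0$); combined with the simple topology of $\mathcal R^-$ and the asymptotic direction $\beta-\pi$ at infinity, this allows one to bound the total variation of $2\arg A$, and hence $\delta+\Delta$, within the announced interval $(-2\pi,2\pi)$.
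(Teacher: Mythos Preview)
Your argument follows essentially the same route as the paper: rewrite $G=A/\overline{A}$ with $A=\gamma_1/\gamma_2$, read off $\delta$ from the behavior at the vertex $\theta_2^\star=\Theta_2^\pm(\theta_1^-)$ (real value versus simple vanishing of the numerator, giving $G\to\pm1$), and compute $H(1)$ via the asymptotic slope $\Theta_1^-(\theta_2)/\theta_2\to(-\sigma_{12}-i\sqrt{\det\Sigma})/\sigma_{11}$ along $\mathcal R^-$ at infinity. Your parametrization through $\beta$ and $N\overline{D}$ is a cosmetic variant of the paper's direct expansion; both land on \eqref{eq:tan_Delta}.

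The one place where your write-up is weaker than the paper is the localization $\delta+\Delta\in(-2\pi,2\pi)$. Saying that $A$ is continuous and $\mathbb C^\ast$-valued on $\mathcal R^-$, that $\mathcal R^-$ has ``simple topology'', and that the asymptotic direction is $\beta-\pi$, does not by itself bound the total variation of $2\arg A$: a nowhere-vanishing continuous function on a half-line can have arbitrarily large argument variation. The paper's argument is sharper and almost immediate: one observes that for $\theta_2\in\mathcal R$, $G(\theta_2)=1$ if and only if $\theta_2\in\mathbb R$, i.e.\ only at the vertex; hence $H(t)\neq1$ for $t\in(0,1]$, and since $\arg H$ starts in $(-\pi,\pi]$ and varies continuously without ever crossing $2\pi\mathbb Z$, it must stay in $(-2\pi,2\pi)$ (the endpoints $\pm2\pi$ are excluded because $H(1)\neq1$, equivalently $\det R\cdot\det\Sigma\neq0$). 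In your language this is just the remark that $A$ is real only at the vertex, so $\arg A$ is trapped in a single open interval of length $\pi$; making that explicit would close the gap.
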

Note that the denominator of \eqref{eq:tan_Delta} can be negative, zero or positive, depending on the parameters. 

\begin{proof}[Proof of Lemma \ref{lem:delta}]
\textcolor{black}{First of all we show the formula \eqref{eq:definition_delta}.
If first $\gamma_1(\theta_1^-,\Theta_2^-(\theta_1^-)) \neq 0$ then $H(0)=G(\Theta_2^-(\theta_1^-))=1$, since $\Theta_2^-(\theta_1^-)\in \mathbb{R}$ simplifies the quotient \eqref{eq:definition_G} and then ${d}=\arg H(0)=0$. In the other case $\gamma_1(\theta_1^-,\Theta_2^-(\theta_1^-)) = 0$, and we have 
$$
\lim_{t\to 0} H(t)=
\lim_{\theta_2 \to \Theta_2^-(\theta_1^-) \atop \theta_2\in \R^-}
\frac{\theta_2-\Theta_2^-(\theta_1^-)}{\overline{\theta_2}-\Theta_2^-(\theta_1^-)} =-1.$$ 
The last equality is due to the fact that the tangent to $\R$ at $\Theta_2^-(\theta_1^-)$ is vertical, see Figure \ref{fig:different_levels}.
Indeed if we write $\theta_2-\Theta_2^-(\theta_1^-)=a+ib$ when $\theta_2 \to \Theta_2^-(\theta_1^-)$ with $ \theta_2\in \R^-$, the vertical tangent gives $\frac{a}{b}\to 0$ and then $\lim_{t\to 0} H(t)=\lim_{\frac{a}{b}\to 0} \frac{\frac{a}{b}+i}{\frac{a}{b}-i}=-1$. It implies that ${d}=\pi$.}

\textcolor{black}{We are now going to show \eqref{eq:tan_Delta}.} We start by remarking that for $\theta_2\in\R$, $G(\theta_2)=1$ if and only if $\theta_2 \in \mathbb{R}$. Accordingly, for $t\in [0,1]$, $H(t)=1$ only at $t=0$. Since $\vert H\vert=1$ on $[0,1]$, then necessarily ${d}+\Delta \in [-2\pi,2\pi]$. We now calculate 
\begin{equation*}
     H(1)=\lim_{ \theta_2 \to \infty\hspace{1mm} \atop \theta_2\in\R^-} G(\theta_2).
\end{equation*}
\textcolor{black}{Thanks to Equation \eqref{eq:definition_Theta_pm} we easily compute the following limit}
\begin{equation*}
     \lim_{ \theta_2 \to \infty\hspace{1mm} \atop \theta_2\in\R^-}\frac{\Theta_1^-(\theta_2)}{\theta_2}=\frac{-\sigma_{12}-i\sqrt{\det\Sigma}}{\sigma_{11}}.
\end{equation*}
Using this limit together with the definition of $G$ (see \eqref{eq:definition_G})
\begin{equation*}
G(\theta_2) =\frac{\big(r_{11}\frac{\Theta_1^-(\theta_2)}{\theta_2}+r_{21}\big)\big(r_{12}\frac{\Theta_1^-(\theta_2)}{\overline{\theta_2}}+r_{22}\big)}{\big(r_{12}\frac{\Theta_1^-(\theta_2)}{\theta_2}+r_{22}\big)\big(r_{11}\frac{\Theta_1^-(\theta_2)}{\overline{\theta_2}}+r_{21}\big)},
\end{equation*}
we obtain that
\begin{align*}
H(1)&=
\frac{\big(r_{11}(-\sigma_{12}-i\sqrt{\det\Sigma})+r_{21}\sigma_{11}\big)\big(r_{12}(-\sigma_{12}+i\sqrt{\det\Sigma})+r_{22}\sigma_{11}\big)}{\big(r_{12}(-\sigma_{12}-i\sqrt{\det\Sigma})+r_{22}\sigma_{11}\big)\big(r_{11}(-\sigma_{12}+i\sqrt{\det\Sigma})+r_{21}\sigma_{11}\big)}
\\
&=
\frac{\sigma_{22}r_{11}r_{12}+\sigma_{11}r_{22}r_{21}-\sigma_{12}(r_{11}r_{22}+r_{12}r_{21})-i\det R\textcolor{black}{\sqrt{\det\Sigma}}}{\sigma_{22}r_{11}r_{12}+\sigma_{11}r_{22}r_{21}-\sigma_{12}(r_{11}r_{22}+r_{12}r_{21})+i\det R \textcolor{black}{\sqrt{\det\Sigma}}}=\exp({i({d}+\Delta)}).
\end{align*}
\textcolor{black}{Remembering that $\arg H(1)={d}+\Delta\in [-2\pi,2\pi]$,} it gives \eqref{eq:tan_Delta} and clearly, ${d}+\Delta$ cannot be equal to $\pm2\pi$ because $\det R \cdot \det \Sigma \neq 0$.
\end{proof}
We now prove that
\begin{equation}
\label{eq:def_chi}
     \chi=\left\{\begin{array}{rl}
     0 & \displaystyle\text{if } \gamma_1(\theta_1^-,\Theta_2^-(\theta_1^-))\leqslant 0,\smallskip\\
      -1 & \displaystyle\text{if } \gamma_1(\theta_1^-,\Theta_2^-(\theta_1^-))>0.
      \end{array}\right.
\end{equation}  
In particular, $\chi$ is an intrinsically non-continuous function of the parameters, \textcolor{black}{see also Remark \ref{rem:different_cases_singularities}}.

\textcolor{black}{Recall that the function $\psi_1$ has been defined in \eqref{eq:expression_psi_1}.}
\begin{lem} 
\label{lem:index}
The index $\chi$ can take only the values $0$ and $-1$, and we have the dichotomy:
\begin{itemize}
     \item $\chi=0\phantom{-} \Longleftrightarrow \gamma_1(\theta_1^-,\Theta_2^-(\theta_1^-)) \leqslant 0 \Longleftrightarrow $ $\psi_1$ has no pole at infinity, 
\item $\chi=-1 \Longleftrightarrow \gamma_1(\theta_1^-,\Theta_2^-(\theta_1^-))> 0 \Longleftrightarrow $ $\psi_1$ has a simple pole at infinity. 
\end{itemize}
\end{lem}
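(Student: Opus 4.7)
The plan is to establish the two equivalences in stages. First I would verify that $\chi \in \{-1, 0\}$: by Lemma \ref{lem:delta} one has $\delta + \Delta \in (-2\pi, 2\pi)$, so $(\delta+\Delta)/(2\pi)$ lies in the open interval $(-1, 1)$, whose floor is either $-1$ or $0$. Next, the equivalence between $\gamma_1(\theta_1^-, \Theta_2^\pm(\theta_1^-)) > 0$ and ``$\psi_1$ has a simple pole at infinity'' (respectively $\leq 0$ and ``no pole at infinity'') is exactly item \ref{item:BVPRiemann1} of Proposition \ref{prop:BVP_Riemann} and requires no further work. What remains is to tie $\chi$ to the sign of $\gamma_1(\theta_1^-, \Theta_2^\pm(\theta_1^-))$.

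For this, I would analyse $\Delta = [\arg G]_{\mathcal{R}^-}$ by exploiting that $\Theta_1^-(\overline{\theta_2}) = \Theta_1^-(\theta_2) \in \mathbb{R}$ for $\theta_2 \in \mathcal{R}$. On $\mathcal{R}^-$ this yields the factorisation
\begin{equation*}
G(\theta_2) = \frac{\gamma_1(\theta_1,\theta_2)}{\overline{\gamma_1(\theta_1,\theta_2)}} \cdot \frac{\overline{\gamma_2(\theta_1,\theta_2)}}{\gamma_2(\theta_1,\theta_2)}, \qquad \theta_1 := \Theta_1^-(\theta_2),
\end{equation*}
so $\arg G(\theta_2) = 2\arg \gamma_1(\theta_1,\theta_2) - 2\arg \gamma_2(\theta_1,\theta_2) \pmod{2\pi}$. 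Each term is the argument of an affine image of $\mathcal{R}^-$ through the linear form $(\theta_1, \theta_2) \mapsto r_{1i}\theta_1 + r_{2i}\theta_2$, a curve that starts at the real value $\gamma_i(\theta_1^-, \Theta_2^\pm(\theta_1^-))$ and escapes to infinity in the direction already computed at the end of the proof of Lemma \ref{lem:delta}. The sign of $\gamma_1(\theta_1^-, \Theta_2^\pm(\theta_1^-))$ dictates whether the starting point of the $\gamma_1$-curve lies to the right or to the left of the imaginary axis, shifting the admissible range of $[\arg \gamma_1]_{\mathcal{R}^-}$ by $\pi$ and thereby moving $\delta + \Delta$ between $(0, 2\pi)$ and $(-2\pi, 0)$. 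Feeding this into the half-angle identity \eqref{eq:tan_Delta} (which determines $\tfrac{\delta+\Delta}{2}$ only modulo $\pi$) then pins down the exact half-line and yields $\chi = 0$ in the non-positive case and $\chi = -1$ otherwise.

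The main technical obstacle will be controlling the rotation count in the borderline regimes, namely when $\gamma_1(\theta_1^-, \Theta_2^\pm(\theta_1^-)) = 0$ (the $\gamma_1$-curve issues from the imaginary axis and one has to track the initial tangent, which by the vertical-tangent argument in the proof of Lemma \ref{lem:delta} produces the $\delta = \pi$ correction) and when $\gamma_1$ vanishes somewhere along $\mathcal{R}^-$ itself, i.e.\ when the pole $p$ of $\phi_1$ crosses in or out of $\mathcal{G}_{\mathcal{R}}$. A robust fallback would be a continuity argument: since $\chi$ is integer-valued it is locally constant on the connected components of parameter space cut out by the equation $\gamma_1(\theta_1^-, \Theta_2^\pm(\theta_1^-)) = 0$, so it is enough to verify the dichotomy at a single representative of each component, for instance the orthogonal case $r_{12} = r_{21} = 0$ analysed in \cite{franceschi_tuttes_2016}, and then invoke connectedness together with the continuity of $\Delta$ on each open region.
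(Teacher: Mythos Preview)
Your reduction of the pole equivalence to Proposition~\ref{prop:BVP_Riemann} and your observation that $\chi\in\{-1,0\}$ via Lemma~\ref{lem:delta} match the paper exactly. The divergence is in how you pin down $\mathrm{sgn}\,\Delta$. You propose to split $\arg G = 2\arg\gamma_1 - 2\arg\gamma_2$ and track the two affine images of $\mathcal R^-$ separately; the paper instead writes, for $\theta_2=a-ib\in\mathcal R^-$ and $\theta_1=\Theta_1^-(\theta_2)\in\mathbb R$,
\[
\gamma_1\overline{\gamma_2}(\theta_1,\theta_2)=\gamma_1(\theta_1,a)\gamma_2(\theta_1,a)+r_{21}r_{22}b^2+ib\,\theta_1\det R,
\]
so that $\arg G=2\arctan\dfrac{b\,\theta_1\det R}{\gamma_1(\theta_1,a)\gamma_2(\theta_1,a)+r_{21}r_{22}b^2}$. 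Letting $\theta_2$ approach the vertex ($b\to0^+$, $\theta_1\to\theta_1^-<0$) and using $\det R>0$ together with $\gamma_2(\theta_1^-,\Theta_2^\pm(\theta_1^-))>0$ gives the sign of $\arg G$ near the vertex as $-\mathrm{sgn}\,\gamma_1(\theta_1^-,\Theta_2^\pm(\theta_1^-))$. Because $G=1$ on $\mathcal R^-$ only at the vertex, that local sign is the sign of $\Delta$ globally. One explicit computation thus replaces your two-curve bookkeeping.

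Your outline has a genuine gap precisely where you assert that the sign of $\gamma_1(\theta_1^-,\Theta_2^\pm(\theta_1^-))$ ``shifts the admissible range of $[\arg\gamma_1]_{\mathcal R^-}$ by $\pi$'' and thereby places $\delta+\Delta$ in $(0,2\pi)$ versus $(-2\pi,0)$: you never determine which sign produces which interval. If $\gamma_1$ starts at a negative real value, its initial argument is $+\pi$ or $-\pi$ according to the sign of $\mathrm{Im}\,\gamma_1=-r_{21}b$ near the vertex, and \emph{that} choice (which depends on $\mathrm{sgn}\,r_{21}$, unconstrained by \eqref{eq:stationary_distribution_CNS}) is exactly what fixes $\mathrm{sgn}\,\Delta$; you have not resolved it. Your continuity fallback does not rescue this either: the orthogonal example $r_{12}=r_{21}=0$ satisfies $\gamma_1(\theta_1^-,\Theta_2^\pm(\theta_1^-))=r_{11}\theta_1^-<0$, so it lies only in the $\leq 0$ region; you supply no representative for the $>0$ region, nor any argument that each region is path-connected within the parameter set cut out by \eqref{eq:stationary_distribution_CNS} and \eqref{eq:drift_negative}.
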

Note that a simple pole at infinity means that $\displaystyle\psi_{1}(t) \mathop{\sim}_{\infty} c\cdot t$ for some non-zero constant $c$.

\begin{proof}
We have already seen in Proposition \ref{prop:BVP_Carleman} that the sign of $\gamma_1(\theta_1^-,\Theta_2^-(\theta_1^-))$ determines whether $\phi_1$ has a pole in $\G$ or not, and thus if $\psi_1$ has a pole at infinity \textcolor{black}{by the correspondence of Figure~\ref{fig:different_levels}}. 
This shows the two  equivalences on the right in the statement of Lemma \ref{lem:index}. 
We are thus left with proving the first two equivalent conditions. 

First, if $\gamma_1(\theta_1^-,\Theta_2^-(\theta_1^-))=0$, ${d}=\pi$ and we have seen that in this case $H(t)\neq 1$ for all $t\in[0,1]$. By \eqref{eq:def_index}, we deduce that $\chi=0$.

If now $\gamma_1(\theta_1^-,\Theta_2^-(\theta_1^-))\neq 0$ we notice that
\begin{equation*}
     \chi = \left\lfloor \frac{\Delta}{2\pi}\right\rfloor =0 \text{ or } -1.
\end{equation*}
Indeed, we have proved in Lemma \ref{lem:delta} that $\Delta \in (-2\pi,2\pi)$. In particular, the sign of $\Delta$ determines $\chi$: if $\sgn \Delta \geq 0$ then $\chi=0$ and if $\sgn \Delta < 0$, $\chi=-1$. In the rest of the proof, we show that $\sgn \Delta=-\sgn \gamma_1(\theta_1^-,\Theta_2^-(\theta_1^-))$. First, $\Delta$ can be computed as
\begin{equation*}
     \Delta = \arg H(1)= [\arg G ]_{\R^-}=\left[\arg \frac{\gamma_1\overline{\gamma_2}}{\gamma_2 \overline{\gamma_1}} (\Theta_1^-(\theta_2),\theta_2)\right]_{\R^-}.
\end{equation*}
Let $\theta_2=a-ib \in \R^-$ (we must have $b\geq0$ and $a \geq \Theta_2^-(\theta_1^-)>0$, see Figure \ref{fig:BVP_theta}) and $\theta_1= \Theta_1^-(\theta_2) \in(-\infty,\theta_1^-]$. Using the expression \eqref{eq:def_gamma_gamma1_gamma2} of $\gamma_1$ and $\gamma_2$, we obtain 
\begin{equation*}
     \gamma_1 \overline{\gamma_2} (\theta_1,\theta_2) = 
\gamma_1(\theta_1,a)\gamma_2(\theta_1,a) + r_{21}r_{22}b^2+ib\theta_1\det R,
\end{equation*} 
from where we deduce that
\begin{equation}
\label{eq:arg_to_compute}
     \arg \frac{\gamma_1 \overline{\gamma_2}}{\gamma_2 \overline{\gamma_1}} (\theta_1,\theta_2)  = 2\arctan \frac{b\cdot\theta_1\cdot\det R}{\gamma_1(\theta_1,a)\gamma_2(\theta_1,a) + r_{21}r_{22}b^2}.
\end{equation}
We now look for the sign of \eqref{eq:arg_to_compute} when $\theta_2 \to \Theta_2^-(\theta_1^-)$, while remaining in $\R^-$. This is sufficient to give the sign of $\Delta$, because \eqref{eq:arg_to_compute} does not change sign on $\R^-$ due to the fact that $G(\theta_2)=1$ on $\R^-$ if and only if $\theta_2=\Theta_2^-(\theta_1^-)$.

When $\theta_2 \to \Theta_2^-(\theta_1^-)$ we have $b\to 0$, $a\to \Theta_2^-(\theta_1^-)$ and $\theta_1\to \theta_1^-$. We thus have
\begin{align*}
     \sgn \arg \frac{\gamma_1 \overline{\gamma_2}}{\gamma_2 \overline{\gamma_1}} (\theta_1,\theta_2)  &= \sgn b\cdot\sgn \theta_1\cdot \sgn \det R\cdot \sgn \gamma_2(\theta_1^-,\Theta_2^-(\theta_1^-)) \cdot\sgn \gamma_1(\theta_1^-,\Theta_2^-(\theta_1^-))
\\
&=
(+1)(-1)(+1)(+1) \sgn \gamma_1(\theta_1^-,\Theta_2^-(\theta_1^-))\\&=-\sgn \gamma_1(\theta_1^-,\Theta_2^-(\theta_1^-)),
\end{align*}
because $b\geq0$, $\theta_1<0$, $\det R>0$ by \eqref{eq:stationary_distribution_CNS}, and $\gamma_2(\theta_1^-,\Theta_2^-(\theta_1^-))>0$ (because $r_{22}>0$ and $r_{22}\mu_1-r_{12}\mu_2<0$, see Figure \ref{fig:p_ellipse} to visualize this geometric condition).
Then $\sgn \Delta =\sgn \arg \frac{\gamma_1 \overline{\gamma_2}}{\gamma_2 \overline{\gamma_1}} (\theta_1,\theta_2) =-\sgn \gamma_1(\theta_1^-,\Theta_2^-(\theta_1^-))$, concluding the proof.
 \end{proof}

\subsection{Resolution of the BVP}
\label{sec:resolution}
\textcolor{black}{We are now in position to conclude the proof of Theorem \ref{thm:main}.}
Reformulating Proposition \ref{prop:BVP_Riemann}, the function $\psi_1$ in \eqref{eq:expression_psi_1}
\begin{itemize}
     \item is sectionally analytic in $\mathbb{C}\setminus [0,1]$,
     \item is continuous on $[0,1]$ from below (with limits $\psi_1^-$) and above (with limits $\psi_1^+$), 
     \item[]
\begin{itemize}
\item[$\bullet$]     
     is bounded at the vicinities of $[0,1]$ if $\gamma_1(\theta_1^-,\Theta_2^-(\theta_1^-)) \neq 0$, 
     \item[$\bullet$]   has a pole of order one at $0$ and bounded at $1$ if $\gamma_1(\theta_1^-,\Theta_2^-(\theta_1^-)) = 0$,
     \end{itemize}
     \item is bounded at infinity if there is no pole before $\Theta_2^-(\theta_1^-)$ (then taking the value $\phi_1(q)$), and with a pole of order one (see Lemma \ref{lem:continuation}) at infinity if not (in short, it has a pole of order $-\chi$ at infinity),
     \item satisfies $\psi_1^+(t)=H(t)\psi_1^-(t)$ for $t\in[0,1]$, with index $\chi$ given by \eqref{eq:def_chi}, cf.\ also Lemma~\ref{lem:index}.
\end{itemize}

\begin{proof}[End of proof of Theorem \ref{thm:main}] 
Our main reference for the resolution of the above so-called homogeneous BVP on an open contour is the book \cite{Mu-72} of Muskhelishvili, see in particular \cite[\S 79]{Mu-72}.

First of all, we prove that there exists a non-zero constant $c$ such that
\begin{equation}
\label{eq:formula_psi_1}
     \psi_1(t)=c(t-1)^{-\chi}\exp\Gamma(t),
\end{equation}
where $\Gamma$ is the following function, sectionally analytic on $\mathbb{C}\setminus [0,1]$:
\begin{equation}
\label{eq:formula_Gamma}
     \Gamma(t)=\frac{1}{2i\pi} \int_{0}^{1} \frac{\log H(z)}{z-t} \mathrm{d}z.
\end{equation}
To make precise the definition \eqref{eq:formula_Gamma}, we define $\log H(z)$ by the facts that it should vary continuously over $[0,1]$, and its initial value is such that $\log H(0)=i{d}$ (i.e., $0$ if $H(0)=1$ and $i\pi$ if $H(0)=-1$, see \eqref{eq:definition_delta}).

At the vicinities $0$ and $1$, we have by \cite[\S 29 and \S 79]{Mu-72} that
\begin{equation}
\label{eq:formula_expGamma}
     \exp{\Gamma(t)}=t^{-\frac{{d}}{2\pi}} \Omega_0 (t),
     \qquad
     \exp{\Gamma(t)}=(t-1)^{\frac{{d}+\Delta}{2\pi}} \Omega_1 (t),
\end{equation}
for some function $\Omega_{0}$ (resp.\ $\Omega_{1}$) analytic in a neighborhood of $0$ (resp.\ $1$) and non-zero at $0$ (resp.\ $1$). Then we set 
\begin{equation}
\label{eq:def_X}
     X(t)=t^{\frac{{d}}{\pi}}(t-1)^{-\chi}\exp{\Gamma(t)}.
\end{equation}
The function $X$ in \eqref{eq:def_X} is sectionally analytic in $\mathbb{C}\setminus [0,1]$, and by construction of $\Gamma$ and the Sokhotski-Plemelj formulas, it satisfies the boundary condition \eqref{eq:condition_Riemann_boundary} (see \cite[\S 79]{Mu-72} for more details). Furthermore it has a pole of order $-\chi+\frac{{d}}{\pi}$ at infinity and is bounded at $0$ and $1$: indeed, $\frac{{d}}{\pi}$ and $-\chi$ are both equal to $0$ or $1$, see Lemmas \ref{lem:delta} and \ref{lem:index}. Then we consider two cases separately. 

\smallskip

$\bullet$ First case: $\gamma_1(\theta_1^-,\Theta_2^-(\theta_1^-)) \neq 0$. Then ${d}=0$, and the function $X$ in \eqref{eq:def_X} simplifies into
\begin{equation*}
     X(t)=(t-1)^{-\chi}\exp{\Gamma(t)}.
\end{equation*}
It satisfies the exact same boundary condition as \eqref{eq:condition_Riemann_boundary}. 
Looking at the ratio $\frac{\psi_1}{X}$, the boundary condition \eqref{eq:condition_Riemann_boundary} gives that on $[0,1]$,
\begin{equation*}
     \frac{\psi_1^+}{X^+}=\frac{\psi_1^-}{X^-}.
\end{equation*}
The above ratio is then analytic in the entire plane, even at the vicinities $0$ and $1$. The point $0$ is a regular point and $1$ is a removable singularity. Indeed, $1$ is an isolated singular point, at which $\frac{\psi_1}{X}$ might be infinite with degree less than unity (namely, $-\chi +\frac{\Delta}{2\pi}$). Moreover, the function $\frac{\psi_1}{X}$ is bounded at infinity, because both $X$ and $\psi_1$ have a pole of the same order $-\chi$. Thanks to Liouville's theorem, we deduce that $\frac{\psi_1}{X}$ is constant. In conclusion, the formula \eqref{eq:formula_psi_1} holds in this case.

\smallskip

$\bullet$ Second case: $\gamma_1(\theta_1^-,\Theta_2^-(\theta_1^-)) = 0$. Then ${d}=\pi$, $\chi=0$ and $X(t)=t\exp{\Gamma(t)}$ in \eqref{eq:def_X}. Firstly, we notice that the function $t\psi_1$ satisfies the boundary condition \eqref{eq:condition_Riemann_boundary}, is bounded at $0$ and $1$, and has a pole of order one at infinity. Moreover, the function $X$ has a pole of order $1$ at infinity. Considering then the ratio $\frac{t\psi_1}{X}$, the boundary condition \eqref{eq:condition_Riemann_boundary} implies that on $[0,1]$,
\begin{equation*}
     \frac{t\psi_1^+}{X^+}=\frac{t\psi_1^-}{X^-}.
\end{equation*}
The above ratio function is thus analytic in the entire complex plane, including the vicinities $0$ and $1$. It is indeed bounded at $1$, and has a removable singularity at $0$: the point $0$ is an isolated singular point, at which $\frac{t\psi_1}{X}$ might be infinite with degree less than $\frac{1}{2}$. Using again Liouville's theorem, we deduce that the function $\frac{t\psi_1}{X}$ is a constant. Formula \eqref{eq:formula_psi_1} therefore also holds.

\smallskip

We now deduce from \eqref{eq:formula_psi_1} the formula \eqref{eq:main_formula_with_constants} stated in Theorem \ref{thm:main}. Going from the $t$-plane back to the $\theta_2$-plane (see \eqref{eq:expression_psi_1} \textcolor{black}{and Figure \ref{fig:different_levels}}), one has that for some constant $c$,
\begin{equation}
\label{eq:Wmainformula}
\phi_1(\theta_2)=\psi_1(W(\theta_2))
=c(W(\theta_2)-1)^{-\chi} \exp\bigg\{\frac{1}{2i\pi} \int_{\R^-} \log G(\theta)\frac{W'(\theta)}{W(\theta)-W(\theta_2)}\mathrm{d}\theta\bigg\}.
\end{equation}
Using the equation \eqref{eq:definition_W} relating $W$ and $w$, we easily obtain 
\begin{equation*}
     W(\theta_2)-1=\frac{w(q)+1}{w(\theta_2)-w(q)} 
\end{equation*}
as well as
\begin{equation*}
     \frac{W'(\theta)}{W(\theta)-W(\theta_2)}=\frac{w'(\theta)}{w(\theta)-w(\theta_2)} - \frac{w'(\theta)}{w(\theta)-w(q)}.
\end{equation*}
Remembering that in the case $\chi=-1$ one has $q=p$, see \eqref{eq:def_q}, we finally obtain that for some constant $c'$, 
\begin{equation*}
     \phi_1(\theta_2)=c'\left(\frac{1}{w(\theta_2)-w(p)}\right)^{-\chi} \exp \bigg\{\frac{1}{2i\pi} \int_{\R^-} \log G(\theta)\frac{w'(\theta)}{w(\theta)-w(\theta_2)}\mathrm{d}\theta \bigg\}.
\end{equation*}     
By definition \eqref{eq:Laplace_transform_boundary} of the Laplace transform we have $\phi_1(0)=\nu_1(\mathbb{R_+})$. To find the exact constant $c'$ (and thereby our main result \eqref{eq:main_formula_with_constants}), we simply evaluate the above formula at $\theta_2=0$ \textcolor{black}{and use Lemma \ref{lem:value_at_0} below}.
\end{proof}
\textcolor{black}{
\begin{lem}
\label{lem:value_at_0}
One has $\left(  \begin{array}{r} \phi_1(0) \\  \phi_2(0)\end{array} \right) = \left(  \begin{array}{r} \nu_1(\mathbb{R}_+) \\  \nu_2(\mathbb{R}_+)\end{array} \right) =- R^{-1} \mu$.
\end{lem}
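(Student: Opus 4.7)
The first equality is immediate: setting $\theta_2=0$ in the definition \eqref{eq:Laplace_transform_boundary} of $\phi_1$ gives $\phi_1(0)=\int_{\mathbb{R}_+}\nu_1(x_2)\,\mathrm{d}x_2=\nu_1(\mathbb{R}_+)$, and symmetrically for $\phi_2(0)$. The substance of the lemma is thus the identity $(\nu_1(\mathbb{R}_+),\nu_2(\mathbb{R}_+))^\top=-R^{-1}\mu$, which we intend to extract from the functional equation \eqref{eq:functional_equation} by a careful limit at the origin.

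The plan is to specialize \eqref{eq:functional_equation} successively at $\theta=(\theta_1,0)$ and $\theta=(0,\theta_2)$. Using the explicit expressions \eqref{eq:def_gamma_gamma1_gamma2}, one has $\gamma(\theta_1,0)=\tfrac{1}{2}\sigma_{11}\theta_1^2+\mu_1\theta_1$, $\gamma_1(\theta_1,0)=r_{11}\theta_1$ and $\gamma_2(\theta_1,0)=r_{12}\theta_1$, so the functional equation reads
\begin{equation*}
-\left(\tfrac{1}{2}\sigma_{11}\theta_1+\mu_1\right)\varphi(\theta_1,0)=r_{11}\,\varphi_1(0)+r_{12}\,\varphi_2(\theta_1),
\end{equation*}
after dividing by $\theta_1$. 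All three Laplace transforms extend continuously up to $\theta_1=0$ (where $\varphi(0,0)=1$ since $\pi$ is a probability measure), so letting $\theta_1\to 0$ yields the linear relation $-\mu_1=r_{11}\nu_1(\mathbb{R}_+)+r_{12}\nu_2(\mathbb{R}_+)$. The symmetric specialization at $\theta=(0,\theta_2)$ gives $-\mu_2=r_{21}\nu_1(\mathbb{R}_+)+r_{22}\nu_2(\mathbb{R}_+)$. Together these two equations read $R\,(\nu_1(\mathbb{R}_+),\nu_2(\mathbb{R}_+))^\top=-\mu$, and inversion of $R$ (which is non-singular thanks to \eqref{eq:stationary_distribution_CNS}) yields the announced identity $-R^{-1}\mu$.

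There is essentially no obstacle: once the functional equation \eqref{eq:functional_equation} is granted in a neighborhood of the origin, the argument reduces to a two-variable Taylor expansion to first order. The only point requiring a moment of care is the justification of the limit $\theta_1\to 0$, which requires the continuity and boundedness of $\varphi_2$ on the imaginary axis near the origin; this follows from the definition \eqref{eq:Laplace_transform_boundary} and the fact that $\nu_2$ is a finite measure (a fact which, incidentally, this very computation quantifies). Computing the explicit formula for $R^{-1}\mu$ then recovers in particular $\nu_1(\mathbb{R}_+)=(r_{12}\mu_2-r_{22}\mu_1)/\det R$, matching the constant appearing in Theorem \ref{thm:main}.
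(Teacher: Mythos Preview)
Your proof is correct and follows essentially the same approach as the paper's: evaluate the functional equation \eqref{eq:functional_equation} at $\theta_2=0$, divide by $\theta_1$, let $\theta_1\to 0$ (using $\varphi(0,0)=1$), and then repeat symmetrically to obtain the linear system $R\,(\phi_1(0),\phi_2(0))^\top=-\mu$. Your version is somewhat more detailed in justifying the limit, but the argument is the same.
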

\begin{proof}
Equation \eqref{eq:Laplace_transform_boundary} evaluated at $0$ gives $\phi_i(0)=\nu_i(\mathbb{R}_+)$. We now evaluate \eqref{eq:functional_equation} at $\theta_2=0$, divide by $\theta_1$ and finally evaluate at $\theta_1=0$. This yields $-\mu_1=r_{11}\phi_1(0)+r_{12}\phi_2(0)$. In a similar way, we obtain $-\mu_2=r_{21}\phi_1(0)+r_{22}\phi_2(0)$, thereby concluding the proof.
\end{proof}}

\textcolor{black}{Note that Lemma \ref{lem:value_at_0} gives, as announced in Theorem \ref{thm:main},
\begin{equation*}
     \nu_1(\mathbb{R}_+)=\frac{r_{12}\mu_2-r_{22}\mu_1}{\det R},\qquad
     \nu_2(\mathbb{R}_+)=\frac{r_{21}\mu_1-r_{11}\mu_2}{\det R},
\end{equation*}
which by \eqref{eq:stationary_distribution_CNS} are positive: \textcolor{black}{remind that this positivity condition is necessary for the existence of the stationary distribution \cite{harrison_brownian_1987}.}}

Clearly, the integral expression \eqref{eq:main_formula_with_constants} of $\phi_1$ is meromorphic in the domain $\G$. In Section \ref{sec:asymptotics} we shall see that it can be meromorphically continued on the much larger domain $\mathbb C\setminus [\theta_2^+,\infty)$.

To conclude this part, let us make Remark \ref{rem:delta_arbitrary} more precise. In the case $\gamma_1(\theta_1^-,\Theta_2^-(\theta_1^-)) = 0$ (i.e., $H(0)=-1$), we have chosen ${d}=\pi$ in \eqref{eq:def_index}. (Recall that choosing ${d}$ is tantamount to fixing a determination of the $\arg$ (or $\log$) function.) Remarkably, any other choice of ${d}$ would have led to the same explicit expression for $\phi_1$, though written differently. For instance, if we had taken ${d}=-\pi$ instead, the index $\chi$ would have been $-1$ (instead of $0$), and the two determinations of the logarithm would differ by $-2i\pi$. With our notation in the proof of Theorem \ref{thm:main}, we would have obtained for some constant $c''$,
\begin{equation*}
     \psi_1(t)=c'' \frac{t-1}{t}\exp{\Gamma(t)}. 
\end{equation*}
This actually corresponds to the formula of Theorem \ref{thm:main} associated with $\chi=-1$.

\subsection{Generalizations}
\label{subsec:generalizations}

Our main Theorem \ref{thm:main} is derived under the hypothesis \eqref{eq:drift_negative} that the coordinates $(\mu_1,\mu_2)$ of the drift are negative. However, the conditions \eqref{eq:stationary_distribution_CNS} (equivalent to the existence of a stationary distribution) allow the drift to have one non-negative coordinate. In the next few lines, we assume that $\mu_1\geq0$ or $\mu_2\geq0$, and we comment on the slight differences which would arise in the analytic treatment of the functional equation \eqref{eq:functional_equation}. 

\textcolor{black}{In the case of a drift having one non-negative coordinate, the drift vector is directed towards one axis (see Figures \ref{fig:tiger} and \ref{fig:mouse}) and accordingly, the pathwise behavior of the reflected Brownian motion is quite different. However, as we are going to explain now, the exact same integral formula stated in Theorem \ref{thm:main} still holds, and the only technical differences may be summarized as follows:}

\smallskip

\textcolor{black}{\textit{The ellipse:} in case of a drift with one non-negative coordinate, the ellipse $\gamma=0$ of Figure~\ref{fig:p_ellipse} would be oriented differently, because the drift is orthogonal to the tangent at the origin of the ellipse, see Figure \ref{fig:drift_negatif}. In particular, one may observe on the ellipse that the real point of the hyperbola $\Theta_2^{-}(\theta_1^-)$ may be negative. It is always negative when $\mu_1<0$ and $\mu_2\geq0$, while it may be positive, negative or zero when $\mu_1\geq0$ and $\mu_2<0$.}

\smallskip

\textcolor{black}{\textit{The continuation:} regarding the meromorphic continuation of  Lemma \ref{lem:continuation} and Lemma \ref{lem:cont}, there are the following changes:
\begin{itemize}
     \item In the case where $\mu_1<0$ and $\mu_2\geq0$ (Figure \ref{fig:tiger}), the real point $\Theta_2^{-}(\theta_1^-)$ is negative. If in addition $\sigma_{12}\leq0$ then the whole domain $\G$ has non-positive real part, and there is no need to continue $\phi_1$. On the other hand, if $\sigma_{12}>0$, then $\G\cap\{\theta_2\in\mathbb C:\Re\,\theta_2 > 0\}$ is composed of two connected components, in which one may continue $\phi_1$ with the same formula as in Lemma \ref{lem:continuation}. Whatever the sign of $\sigma_{12}$ is, the function $\phi_1$ has never a pole in the domain $\G$.
     \item \textcolor{black}{The case where $\mu_1\geq0$ and $\mu_2<0$ (Figure \ref{fig:mouse}) may be either similar to that of a double negative drift when the real point $\Theta_2^{-}(\theta_1^-)$ is positive, or similar to the case where $\mu_1<0$ and $\mu_2\geqslant 0$ when $\Theta_2^{-}(\theta_1^-)$ is negative.}
\end{itemize}}

\smallskip

\textcolor{black}{\textit{The BVP:} finally, the BVP (our Proposition \ref{prop:BVP_Carleman}) is exactly the same, and thus the explicit formula for the Laplace transform (Theorem \ref{thm:main}) also.}

\smallskip

\begin{figure}[hbtp]
\centering
\includegraphics[scale=0.8]{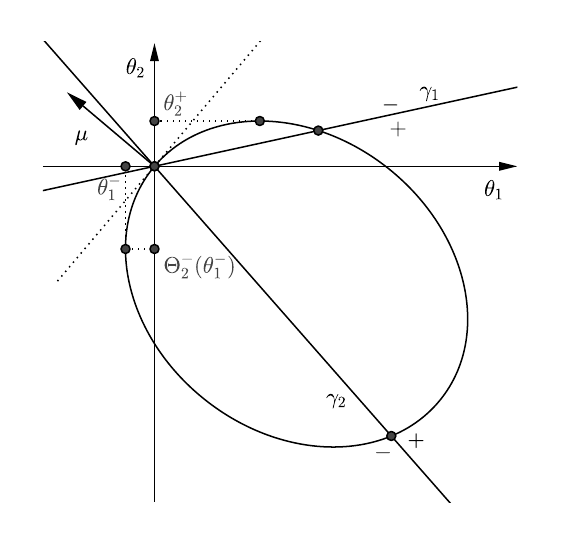}
\includegraphics[scale=0.8]{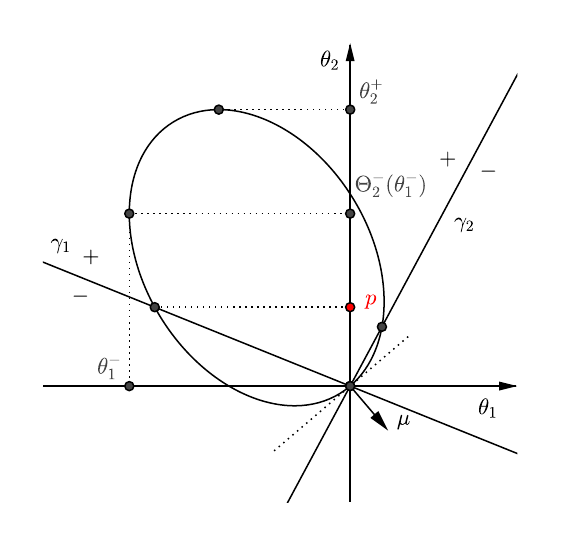}
%
\includegraphics[scale=1]{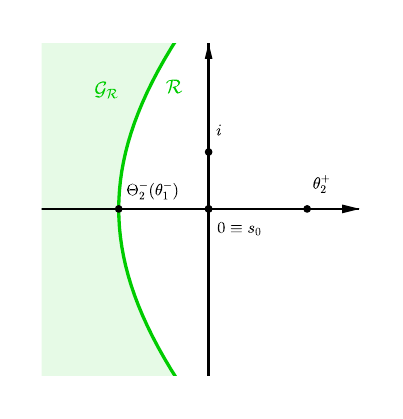}
\includegraphics[scale=1]{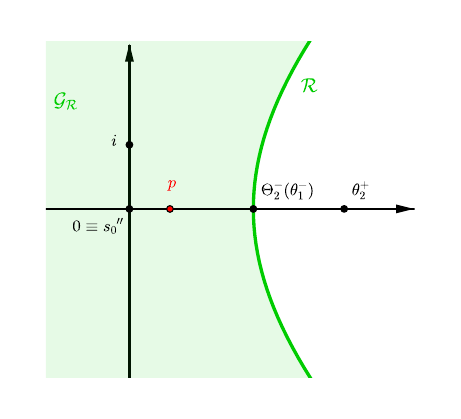}
\caption{\textcolor{black}{On the left $\mu_1<0$ and $\mu_2>0$, and on the right $\mu_1>0$ and $\mu_2<0$. The figures on the top represent the ellipses $\gamma=0$, and the figures on the bottom show the domains $\G$ and related quantities. Recall that the drift is orthogonal to the tangent at the origin of the ellipse. Remark that contrary to the doubly negative drift case, the real point $\Theta_2^{\pm} (\theta_1^-)$ of the hyperbola $\R$ may now be negative}}
\label{fig:drift_negatif}
\end{figure}

Let us also very briefly mention here the case of reflected Brownian motion in higher dimension \cite{HaRe-81,harrison_multidimensional_1987,BrDaHa-10,Br-11,DaHa-12}. Compared to its two-dimensional analogue, much less is known. However, an analogue of the functional equation \eqref{eq:functional_equation} can still be stated (since the BAR exists in any dimension, see \cite{harrison_brownian_1987,dai_reflected_1992}). Clearly, our techniques (based on complex analysis) use the dimension $2$, and in our opinion, generalizing in higher dimension these BVP techniques is a difficult open problem. In the discrete case too, the case of dimension $3$ is less understood. One can mention \cite{Co-84} for some ideas to state a BVP in dimension $3$, as well as \cite{BoBMKaMe-16} for more combinatorial techniques.

\section{Singularity analysis and asymptotics of the boundary distribution}
\label{sec:asymptotics}

The asymptotics (up to a constant) of the boundary measures has been obtained by Dai and Miyazawa in \cite{DaMi-13}, see also \cite{dai_reflecting_2011,franceschi_asymptotic_2016} for the interior measure. In this section we show that our expression \eqref{eq:main_formula_with_constants} for the Laplace transform $\phi_1$ stated in Theorem \ref{thm:main} is perfectly suited for singularity analysis, and accordingly to study the asymptotics (including the computation of the constant) of the boundary stationary distributions $\nu_1$ and $\nu_2$. We shall first recall the result of \cite{DaMi-13} and express it in terms of our notations. Then we will explain how, thanks to Theorem \ref{thm:main}, we could obtain a new proof of this result and make the constants explicit.

\subsection{Asymptotic results}
\label{sec:asymptotic_results}

\begin{thm}[Theorem 6.1 of \cite{DaMi-13}]
\label{thm:asymptDai}
\textcolor{black}{Under the assumption \eqref{eq:drift_negative},} the following asymptotics holds:
\begin{equation}
\label{eq:asymptDai}
\lim_{x\to\infty} \frac{\nu_1 (x)}{x^\kappa e^{-\tau_2 x}} = b,
\end{equation}
where $\kappa\in\{-\frac{3}{2},-\frac{1}{2},0,1\}$ and $\tau_2\in\{p,p',\theta_2^+\}$ are given in Table \ref{tableau}, and $b$ is some positive constant. See Figures \ref{fig:cases1} and \ref{fig:cases2} to visualize geometrically the different cases.
\end{thm}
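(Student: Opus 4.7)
The plan is to deduce the asymptotics \eqref{eq:asymptDai} by performing a singularity analysis on the explicit Laplace transform \eqref{eq:main_formula_with_constants}. The classical correspondence to exploit is: for $\phi_1(\theta_2)=\int_0^\infty e^{\theta_2 x}\nu_1(x)\,\mathrm{d}x$, the exponential decay rate $\tau_2$ and the polynomial prefactor $x^\kappa$ of $\nu_1(x)$ are dictated, respectively, by the location and by the nature of the rightmost singularity of $\phi_1$ on the positive real axis.

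First, I would meromorphically continue $\phi_1$ to $\mathbb C\setminus[\theta_2^+,\infty)$. Inside $\G$, Proposition \ref{prop:BVP_Carleman} already gives that $\phi_1$ is meromorphic with at most one pole, at $p$; beyond $\R$, I would apply the continuation formula \eqref{eq:continuation_formula} of Lemma \ref{lem:continuation}, which may introduce a new pole either at a zero of $\gamma_1$ (namely $p$ again) or at a pole of $\phi_2\circ\Theta_1^-$ (producing a second candidate $p'$, obtained by the analog of \eqref{eq:expression_p} after the coordinate swap of Theorem \ref{thm:main}). The branch point $\theta_2^+$, at which the two determinations $\Theta_1^\pm$ coalesce and $w$ has a square-root singularity, is the natural limit of this continuation. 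I would also verify directly on \eqref{eq:main_formula_with_constants} that the Cauchy integral defines an analytic function on $\mathbb C\setminus[\theta_2^+,\infty)$; this follows from the conformal properties of $w$ recalled in Lemma \ref{lem:conformal_gluing} and Remark \ref{eq:algebraic_nature_w_W}, combined with the observation that $w(\theta)=w(\theta_2)$ for $\theta\in\R^-$ forces $\theta_2\in\R$.

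Next, I would enumerate the candidates for the dominant singularity, which are the three real points $p,p',\theta_2^+$. Retaining the smallest of those that actually exist in $(0,\theta_2^+]$ produces the $\tau_2$ appearing in Table \ref{tableau}. The polynomial order $\kappa$ would then be read off the local singular expansion of \eqref{eq:main_formula_with_constants} at $\tau_2$: a simple pole at $\tau_2\in\{p,p'\}$ gives $\phi_1(\theta_2)\sim c/(\tau_2-\theta_2)$ and hence $\kappa=0$; a dominant branch point at $\tau_2=\theta_2^+$ gives, depending on whether the prefactor $(w(\theta_2)-w(p))^{-\chi}$ reinforces or tempers the square-root singularity of $w$, an expansion of the form $\phi_1(\theta_2)\sim c(\theta_2^+-\theta_2)^{-1/2}$ or $\phi_1(\theta_2)\sim c(\theta_2^+-\theta_2)^{1/2}$, corresponding to $\kappa=-1/2$ or $\kappa=-3/2$; and the value $\kappa=1$ arises in the borderline configuration where two candidates coalesce and create a pole of order two.

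Finally, a Doetsch-type transfer theorem for Laplace transforms (in the spirit of the complex-analytic Tauberian arguments exploited in \cite{fayolle_random_1999}) will convert each singular expansion $\phi_1(\theta_2)\sim c(\tau_2-\theta_2)^{-\alpha}$ into $\nu_1(x)\sim c\,x^{\alpha-1}e^{-\tau_2 x}/\Gamma(\alpha)$ as $x\to\infty$, which yields both the exponent $\kappa=\alpha-1$ and an explicit formula for $b$. The hardest part will be exactly this last step: rigorously isolating the leading singular term at $\tau_2$ from the Cauchy integral in \eqref{eq:main_formula_with_constants} — in particular checking that the exponential factor there is analytic and nonvanishing at $\tau_2$, so that the singular behavior is entirely carried by the prefactor — and handling the degenerate configurations in which two of the candidates $p,p',\theta_2^+$ coincide. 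A secondary technical difficulty will be to justify the transfer theorem itself: since only the real-axis behavior of $\phi_1$ is directly controlled, a moderate-growth estimate along vertical lines will be needed in order to apply the Tauberian theorem in the required half-plane.
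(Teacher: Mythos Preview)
Your proposal is correct and follows essentially the same approach as the paper: singularity analysis of the explicit Laplace transform \eqref{eq:main_formula_with_constants}, meromorphic continuation to $\mathbb C\setminus[\theta_2^+,\infty)$, identification of the candidate singularities $p,p',\theta_2^+$, and a Doetsch-type transfer theorem (the paper invokes \cite[Theorem 37.1]{doetsch_introduction_1974}). The paper actually only carries out the details for case 1.a of Table \ref{tableau} and merely sketches the remaining cases via the Hankel-contour continuation you describe, so your plan is, if anything, more exhaustive than the paper's own treatment.
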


We now propose a series of three remarks on Theorem \ref{thm:asymptDai}.

\smallskip

$\bullet$ We have already introduced $p$, see \eqref{def:definition_p}, and $p'$ is as follows: it is the (unique, when it exists) non-zero point 
$p'=\Theta_2^+(r)$, with $r$ defined by $\gamma_2(r,\Theta_2^-(r))=0$ and $r \leqslant \Theta_1^\pm(\theta_2^+)$.
It will be convenient to adopt the following notation: we will write $p> \theta_2^+$ (resp.\ $p'> \theta_2^+$) when $p$ (resp.\ $p'$) does not exist. 

\smallskip

$\bullet$ Theorem 6.1 of \cite{DaMi-13} deals with the asymptotics of $\nu_1(x,\infty)$, and not with that of $\nu_1(x)$, as stated in Theorem \ref{thm:asymptDai} below. However, the two statements are equivalent: when the asymptotics of the density has the form $bx^\kappa e^{-\tau_2 x}$ \textcolor{black}{as in \eqref{eq:asymptDai}}, the corresponding tail probability is given by the exact same asymptotics (with another constant $b$), see \cite[Lemma D.5]{dai_reflecting_2011}.

\smallskip

$\bullet$ Before stating Table \ref{tableau}, which gives the values of $\kappa$ and $\tau_2$ of Theorem \ref{thm:asymptDai}, we briefly recall Dai and Miyazawa's notations: $\theta^{(2,\max)}$ is the (unique) point of the ellipse $\gamma=0$ such that $\theta_2^{(2,\max)}=\theta_2^+$,
$\tau_2= \sup\{ \theta_2 >0 : \exists \theta_1\in\mathbb{R}, \phi(\theta_1,\theta_2) < \infty  \}$ and $\theta^{(2,r)}$ is the intersection point of the straight line $\gamma_1=0$ and the ellipse $\gamma=0$. Notice that the definition of $\tau_2$ does not rely on an analytic continuation of $\phi$.

\begin{table}[hbtp] 
\centering
\begin{tabular}{|l|l|l|l|r|l|}
  \hline
  \multicolumn{2}{|c|}{Cases} & \multicolumn{2}{c|}{Dai and Miyazawa's categories} & $\kappa$ & $\tau_2$  \\
  \hline
  \multirow{4}{*}{$p$ or $p'\in (0,\theta_2^+)$ } \hfill1.a\hspace{0.25mm} & $\displaystyle p < \Theta_2^-(\theta_1^-)^{\phantom{'}}$ & \multirow{4}{*}{$\tau_2 < \theta_2^{(2,\max )}$} & Categories I or I\!I & $0$& $p$  \\
  \cline{2-2} \cline{4-6}
   \hfill1.b &  $\Theta_2^-(\theta_1^-) \leqslant p<{p'}^{\phantom{'}}$  &  & Category I & $0$& $p$
    \\
  \cline{2-2} \cline{4-6}
\hfill1.c\hspace{0.45mm}  &  $\Theta_2^-(\theta_1^-) \leqslant p'<p$  &  & Category I\!I\!I, $\tau_2 \neq {\theta_2^{(2,r)}}^{\phantom{'}}$ &    $0$& $p'$ 
   \\
  \cline{2-2} \cline{4-6}
 \hfill1.d  &  $\Theta_2^-(\theta_1^-) \leqslant p=p'$ &  & Category I\!I\!I, $\tau_2 = \theta_2^{(2,r)}$ &  $1$& $p$  \\ \hline  
\multirow{4}{*}{$p$ and $p' \geqslant \theta_2^+$   } \hfill2.a\hspace{0.25mm} & $p$ and $p'>\theta_2^+$  & \multirow{4}{*}{$\tau_2 = \theta_2^{(2,\max )}$} &
Category I, $\theta^{(2,r)} \neq {\theta^{(2,\max)}}^{\phantom{'}}$
 & $-\frac{3}{2}$ & $\theta_2^+$

\\  \cline{2-2} \cline{4-6}
\hfill2.b   & $\theta_2^+=p^{\phantom{'}}$ &  & Category I, $\theta^{(2,r)} = {\theta^{(2,\max)}}^{\phantom{'}}$ &  $-\frac{1}{2}$& $\theta_2^+$  \\
  \cline{2-2} \cline{4-6}
\hfill2.c\hspace{0.45mm}  & $\theta_2^+={p'}^{\phantom{'}}$ &  & Category I\!I, $\theta^{(2,r)} \neq {\theta^{(2,\max)}}^{\phantom{'}}$ & $-\frac{1}{2}$& $\theta_2^+$   \\
  \cline{2-2} \cline{4-6}
 \hfill2.d & $\theta_2^+=p={p'}^{\phantom{'}}$ &  & Category I\!I, $\theta^{(2,r)} = {\theta^{(2,\max)}}^{\phantom{'}}$ &  $0$& $\theta_2^+$ \\
  \hline
\end{tabular}
\caption{Dai and Miyazawa's categories expressed with our notations}
\label{tableau}
\end{table}

\begin{figure}[hbtp]
\centering
\includegraphics[scale=0.6]{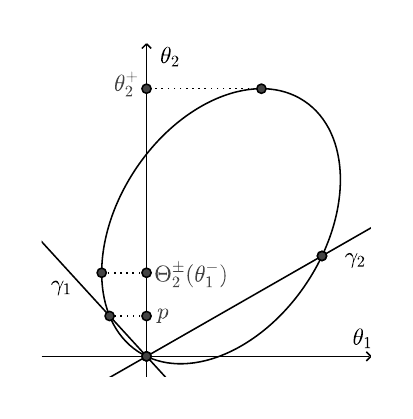}
\hspace{-1.1cm}
\includegraphics[scale=0.9]{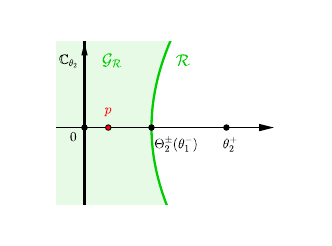}
\hspace{-1.1cm}
\includegraphics[scale=0.6]{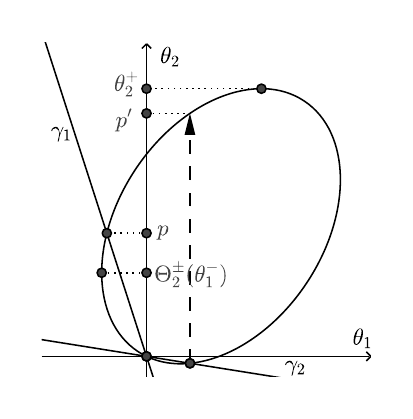}
\hspace{-1.1cm}
\includegraphics[scale=0.9]{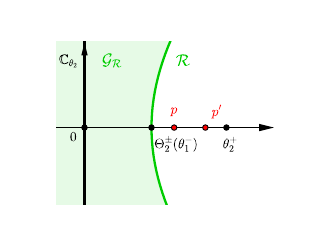}
\hspace{-1.1cm}
\includegraphics[scale=0.6]{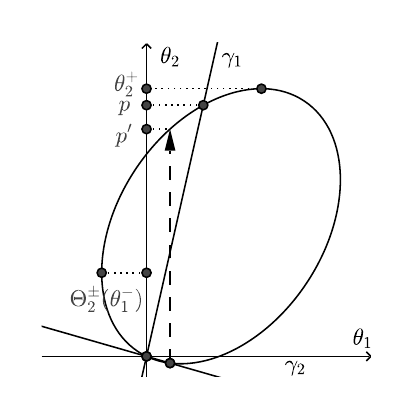}
\hspace{-1.1cm}
\includegraphics[scale=0.9]{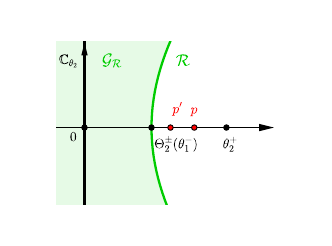}
\hspace{-1.1cm}
\includegraphics[scale=0.6]{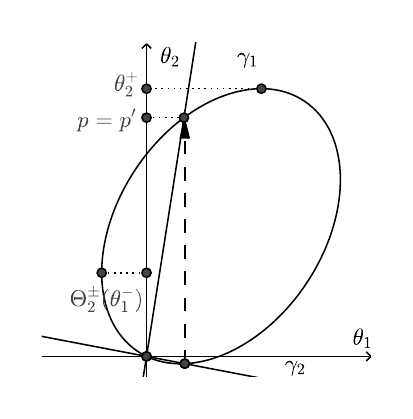}
\hspace{-1.1cm}
\includegraphics[scale=0.9]{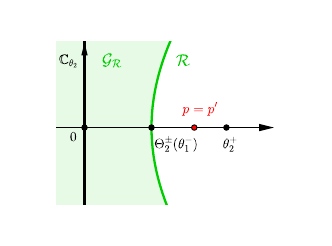}
\caption{Ellipses and curves in the cases 1.a, 1.b, 1.c and 1.d of Table \ref{tableau}}
\label{fig:cases1}
\end{figure}


\begin{figure}[hbtp]
\centering
\includegraphics[scale=0.6]{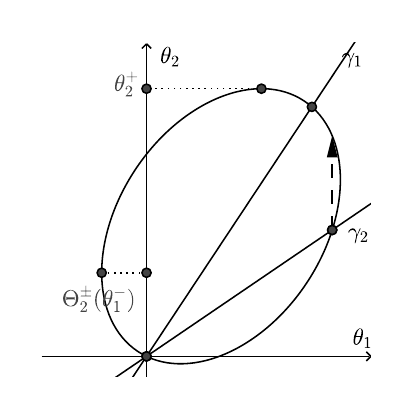}
\hspace{-0.8cm}
\includegraphics[scale=0.6]{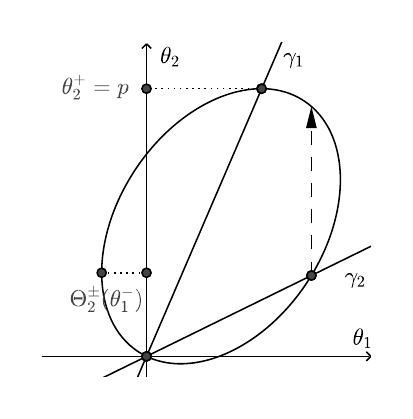}
\hspace{-0.8cm}
\includegraphics[scale=0.6]{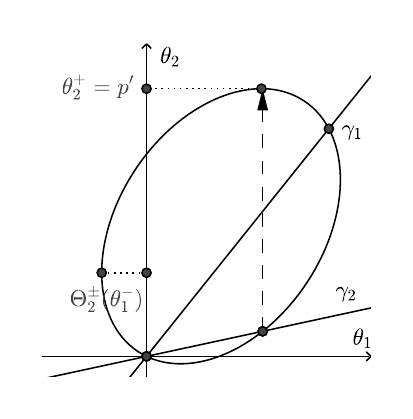}
\hspace{-0.8cm}
\includegraphics[scale=0.6]{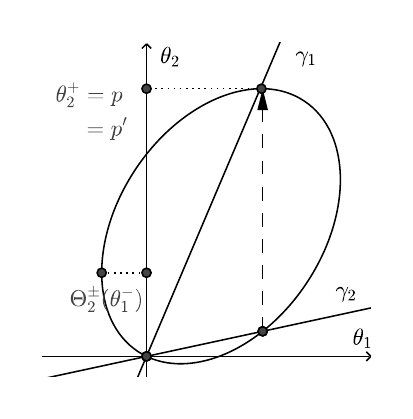}
\caption{Ellipses in the cases 2.a, 2.b, 2.c and 2.d of Table \ref{tableau}}
\label{fig:cases2}
\end{figure}

\subsection{Sketch of the proof of Theorem \ref{thm:asymptDai}}

The idea is to study the singularities of $\phi_1$ and to use transfer theorems (such as \cite[Theorem 37.1]{doetsch_introduction_1974}) relating the asymptotics of a function and the singularities of its Laplace transform. In our case, the singularity closest to $0$ will determine the asymptotics. Our aim here is not to propose a complete proof of Theorem \ref{thm:asymptDai} (for this we refer to \cite{DaMi-13}), but rather to illustrate that Theorem \ref{thm:main} indeed implies Theorem \ref{thm:asymptDai}, with the additional feature of providing an exact expression for the constant $b$ in \eqref{eq:asymptDai}. We give details for the case~1.a of Table \ref{tableau}, and only sketch the difficulties that would arise in the remaining cases.

\begin{proof}[Proof of Theorem \ref{thm:asymptDai} and computation of $b$ in case 1.a]
In that case, the index $\chi$ equals $-1$ and the formula \eqref{eq:main_formula_with_constants} gives
\begin{equation*}
\phi_1(\theta_2)
=\\\nu_1(\mathbb{R}_+)  \left( \frac{w(0)-w(p)}{w(\theta_2)-w(p)} \right) \exp\bigg\{\frac{1}{2i\pi} \int_{\R^-} \log G(\theta) \left[ \frac{w'(\theta)}{w(\theta)-w(\theta_2)}
- \frac{w'(\theta)}{w(\theta)-w(0)}
\right]
\mathrm{d}\theta\bigg\}.
\end{equation*}
Recall that $w$ is analytic, one-to-one on $\G$ and further satisfies $w(\theta_2)\neq w(\theta)$, for all $\theta_2\in\G$ and $\theta\in\R^-$. As a first result, the integral part (thus also its exponential) is analytic in the domain $\G$. A second consequence is that $\frac{1}{w(\theta_2)-w(p)}$ has a simple pole at $p$:
$
\phi_1(\theta_2)
=\frac{b+o(1)}{\theta_2-p},
$
with
\begin{equation*}
b= \nu_1(\mathbb{R}_+)  \left( \frac{w(0)-w(p)}{w'(p)} \right) \exp\bigg\{\frac{1}{2i\pi} \int_{\R^-} \log G(\theta) \left[ \frac{w'(\theta)}{w(\theta)-w(p)}
- \frac{w'(\theta)}{w(\theta)-w(0)}
\right]
\mathrm{d}\theta\bigg\}.
\end{equation*}
Theorem 37.1 of \cite{doetsch_introduction_1974} gives the announced asymptotics $\nu_1(x)=e^{-px}(b+o(1))$ as $x\to\infty$.
\end{proof}

In the other cases, the singularities are not in $\G$ and we thus need to extend meromorphically $\phi_1$ in a larger domain:

\begin{prop}[Theorem 11 of \cite{franceschi_asymptotic_2016}]
\label{prop:continuation_Laplace_transform}
The Laplace transform $\phi_1$ can be meromorphically continued on the domain $\mathbb C\setminus [\theta_2^+,\infty)$.
\end{prop}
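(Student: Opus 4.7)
The plan is to promote the meromorphic extension already granted by Lemma \ref{lem:continuation} by exploiting the explicit integral representation of $\phi_1$ furnished by Theorem \ref{thm:main}, taking advantage of the fact that the generalized Chebyshev function built into $w$ enjoys an analytic continuation far beyond $\overline{\mathcal{G}_{\mathcal{R}}}$. The key observation, already recorded in the proof of Lemma \ref{lem:conformal_gluing}, is that $T_{\pi/\beta}$ (and hence the function $w$ of \eqref{eq:definition_w}) admits an analytic continuation to $\mathbb{C}\setminus(\theta_2^+,\infty)$.

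I would first use the formula \eqref{eq:main_formula_with_constants} to define a candidate $\widetilde{\phi}_1$ on $(\mathbb{C}\setminus(\theta_2^+,\infty))\setminus \mathcal{R}$. Outside the curve $\mathcal{R}$ the Cauchy-type integrand is holomorphic in $\theta_2$, because $w$ is injective on $\overline{\mathcal{G}_{\mathcal{R}}}$ and extends analytically across $\mathcal{R}$ beyond it, so $w(\theta)-w(\theta_2)$ does not vanish for $\theta \in \mathcal{R}^-$ and $\theta_2$ off $\mathcal{R}$. The prefactor $(w(\theta_2)-w(p))^\chi$ is meromorphic on $\mathbb{C}\setminus(\theta_2^+,\infty)$, with at most a simple pole at $p$ when $\chi=-1$. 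Theorem \ref{thm:main} already guarantees $\widetilde{\phi}_1 = \phi_1$ on $\mathcal{G}_{\mathcal{R}}$.

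Next I would analyze the jump of $\widetilde{\phi}_1$ across $\mathcal{R}^-$ via the Sokhotski--Plemelj formulas, exactly as in the resolution of the Riemann BVP in Section \ref{sec:resolution}. The jump factor equals $G(\theta_2)$, which by \eqref{eq:bound_cond_gen} and the identity $G(\theta_2)G(\overline{\theta_2})=1$ is precisely the boundary relation used in Lemma \ref{lem:continuation}; symmetrically one treats $\mathcal{R}^+$. Combining this with the fact that $\widetilde{\phi}_1$ and the continuation of Lemma \ref{lem:continuation} both satisfy the same boundary transition and agree on $\mathcal{G}_{\mathcal{R}}$, one concludes that they glue into a single meromorphic function on $\mathbb{C}\setminus [\theta_2^+,\infty)$. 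The remaining singularities are isolated poles: the possible simple pole at $p$ from the prefactor, and poles at zeros of $\gamma_1(\Theta_1^-(\theta_2),\theta_2)$ inherited from the continuation formula \eqref{eq:continuation_formula}. The half-line $[\theta_2^+,\infty)$ must be excluded because $\theta_2^+$ is a branch point of $w$, hence of $\widetilde{\phi}_1$.

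The main obstacle is matching carefully the chosen determination of $\log G$ along $\mathcal{R}^-$ (specified at the vertex in Theorem \ref{thm:main}) with the branch implicitly used by Lemma \ref{lem:continuation}, so that the two expressions coincide on the overlap region and the gluing is genuine rather than multi-valued. A secondary technical point is verifying that the singular behaviour of the integral near $\theta_2^+$ is only a branch-point behaviour of algebraic/logarithmic type coming from $w$, so that no essential singularity is introduced along the way; this follows from the local expansion of $T_{\pi/\beta}$ near its branch point, but has to be written out explicitly.
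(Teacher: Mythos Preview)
The paper does not give a self-contained proof here: it cites \cite{franceschi_asymptotic_2016} for the full argument and only sketches an alternative, namely to deform the integration contour $\R^-$ in \eqref{eq:main_formula_with_constants} to a Hankel contour around the cut $[\theta_2^+,\infty)$ (in the spirit of Figure~\ref{fig:integration_contour_Tutte}), yielding an integral representation that is manifestly meromorphic on $\mathbb{C}\setminus[\theta_2^+,\infty)$ without any gluing across $\R$. Your strategy shares the starting point (exploiting the explicit formula) but proceeds differently, via Sokhotski--Plemelj jumps and a gluing argument.

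That gluing step is where the proposal breaks down. The continuation furnished by Lemma~\ref{lem:continuation} is genuinely meromorphic across $\R$ (the domain \eqref{eq:domain_continuation} strictly contains $\overline{\G}$), so $\phi_1$ has \emph{no} jump there. Your $\widetilde{\phi}_1$, by contrast, \emph{does} jump across $\R$ by Sokhotski--Plemelj. Hence on the exterior of $\overline{\G}$ the formula-based $\widetilde{\phi}_1$ is not equal to $\phi_1$, and saying that both ``satisfy the same boundary transition'' does not make them glue into a single-valued function: one has a jump, the other does not. (A minor related point: $w$ is not injective on $\overline{\G}$---it is two-to-one on $\R$ by the very gluing property \ref{item:conformal_3}---and the non-vanishing of $w(\theta)-w(\theta_2)$ for $\theta\in\R^-$ and $\theta_2$ in the exterior, while true, amounts to showing $w^{-1}((-\infty,-1])=\R$ inside $\mathbb{C}\setminus[\theta_2^+,\infty)$ and needs its own justification.) To salvage your approach you would have to identify explicitly what the exterior branch of $\widetilde{\phi}_1$ equals and correct for the jump; the Hankel-contour deformation the paper suggests is a cleaner way to reach the same conclusion.
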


Proposition \ref{prop:continuation_Laplace_transform} has already been proved in \cite{franceschi_asymptotic_2016}, see Theorem 11 there. Note that the formula~\eqref{eq:main_formula_with_constants} of Theorem~\ref{thm:main} provides an alternative, direct, analytic proof, which can be sketched as follows: the equation \eqref{eq:main_formula_with_constants}
is valid a priori only for $\theta_2$ in $\G$. However, considering a Hankel contour similar to that of Figure \ref{fig:integration_contour_Tutte}, surrounding $[\theta_2^+,\infty)$, we could write $\phi_1$ as an integral over the cut $[\theta_2^+,\infty)$. The study of the so-obtained formula would give the singularities $p$, $p'$ and $\theta_2^+$ as in Table \ref{tableau}, and would lead to the precise asymptotics (we could even obtain the full asymptotic development) and the computation of $b$.

\begin{rem}
\label{rem:different_cases_asymptotic}
We have already commented on the fact \textcolor{black}{that} within a single formula, Theorem~\ref{thm:main} actually captures two different expressions, depending on the value of $\chi$ in \eqref{eq:def_chi}. From an asymptotic viewpoint, Section \ref{sec:asymptotics} shows that different cases exist as well, depending on various parameters. It should be noted that these cases are all different, i.e., ${d}$ and $\chi$ do not govern the asymptotic behavior of the boundary measures. 
\end{rem}

\subsection{Asymptotics at $0$ of the boundary densities}
\label{subsec:asymp_0}

\textcolor{black}{Let us introduce the key parameter
\begin{equation}
\label{eq:definition_alpha}
     \alpha=\frac{\epsilon+\delta-\pi}{\beta},
\end{equation}
where $\epsilon$ and $\delta$ are the reflection angles defined in Appendix \ref{app:BM_cones}. They are in $(0,\pi)$ and satisfy
\begin{equation}
\label{eq:expression_delta_epsilon2}
  \tan   \epsilon=
  \frac{\sin\beta}{\frac{r_{21}}{r_{11}}\sqrt{\frac{\sigma_{11}}{\sigma_{22}}}+\cos\beta}
  \qquad\text{and}\qquad
  \tan   \delta=
  \frac{\sin\beta}{\frac{r_{12}}{r_{22}}\sqrt{\frac{\sigma_{22}}{\sigma_{11}}}+\cos\beta}.
\end{equation}
The quantity $\alpha$ is constantly used in the literature to establish some criteria. For example, $\alpha<1$ if and only if the semimartingale reflected Brownian motion exists, see \cite{Williams-85}. See also \cite{LaReZw-16} for a recent paper on the case $\alpha\in(1,2)$.}

\textcolor{black}{As we shall prove now, the exponent in the asymptotics at $\infty$ of $\phi_1$ is directly related to this crucial parameter of the model:}
\begin{prop}
\label{prop:behavior_phi1_infty}
\textcolor{black}{Let $\alpha$ be defined in \eqref{eq:definition_alpha}. There exists a positive constant $C>0$ such that
\begin{equation}
\label{eq:behavior_phi1_infty}
     \phi_1(\theta_2)\underset{\theta_2\to \infty}{\sim} C \theta_2^{\alpha-1}.
\end{equation}}
\end{prop}

\textcolor{black}{Before proving \eqref{eq:behavior_phi1_infty}, let us explain informally how the previous result should be related to the asymptotics of the stationary distribution at $0$.}

\begin{rem}
\textcolor{black}{Combined to the functional equation \eqref{eq:Laplace_transform_boundary}, Proposition \ref{prop:behavior_phi1_infty} and its analogue for $\phi_2(\theta_1)$ lead to, for any $s\in[0,\infty]$,
\begin{equation}
\label{eq:behavior_phi_infty}
     \phi(\theta_1,\theta_2)\underset{\theta_1,\theta_2 \to \infty \atop \theta_2 / \theta_1 \to s}{\sim} C_s \bigl(\theta_1^2+\theta_2^2\bigr)^{\frac{\alpha-2}{2}},
\end{equation}
for some constant $C_s>0$. Moreover, classical transfer theorems connect singularities and asymptotics of a function and its (inverse) Laplace transform. The latter suggest that \eqref{eq:behavior_phi1_infty} and \eqref{eq:behavior_phi_infty} might be transferred as (below, $(r,t)$ represent the polar coordinates of $(x_1,x_2)$)
\begin{equation*}
     \nu_1(x_2) \underset{x_2\to 0}{\sim} C x_2^{-\alpha}\qquad \text{and}\qquad
     \pi(x_1,x_2)=\pi(r\cos t , r \sin t) \underset{r\to 0}{\sim} C_t r^{-\alpha}
\end{equation*}
for positive constants $C$ and $C_t$.}

\textcolor{black}{The above equation is reminiscent of the following (non-asymptotic) formula \cite{Williams-85b}
\begin{equation*}
     {\pi}(r\cos t , r \sin t)= r^{-\alpha} \sin(\delta-\alpha t )
\end{equation*}
obtained in the driftless case, for $r>0$ and $t\in[0,\beta]$, and where $\delta$ is the reflection angle defined in \eqref{eq:expression_delta_epsilon2}. It means that the density $\pi$ behaves near the corner as in the driftless case. As explained in \cite{hobson_recurrence_1993}, a probabilistic interpretation is that the behavior at the corner is determined on the small scale where the drift may be neglected compared to the Brownian part.}

\textcolor{black}{Let us finally remark that these formulas are consistent with the fact that the process reaches the boundary if $\alpha>0$ (see \cite{varadhan_brownian_1985,Williams-85b}); indeed, in this case the stationary distribution goes to infinity near the corner.}
\end{rem}

\begin{proof}[Proof of Proposition \ref{prop:behavior_phi1_infty}]
\textcolor{black}{First of all, let us recall that we have defined $\psi_1$ such that
\begin{equation*}
     \phi_1(\theta_2)=\psi_1(W(\theta_2)),
\end{equation*}
see \eqref{eq:expression_psi_1}, where for some $q\in \mathbb{R}\in\G$,
\begin{equation*}
     W(\theta_2)=\frac{w(\theta_2)+1}{w(\theta_2)-w(q)},
\end{equation*}
see \eqref{eq:definition_W}.
In the rest of the proof, $C$ will denote a non-zero constant that may differ from line to line.}

\textcolor{black}{The formulas \eqref{eq:formula_psi_1} and \eqref{eq:formula_expGamma} entail that 
\begin{equation*}
     \psi_1(t)\underset{t\to 1}{\sim} C(t-1)^{\frac{{d}+\Delta}{2\pi}-\chi}.
\end{equation*}
Moreover, by \eqref{eq:tan_Delta} we have
\begin{equation*}
     \tan \frac{{d}+\Delta}{2}=\frac{\det R \sqrt{\det \Sigma}}{\sigma_{12}(r_{11}r_{22}+r_{12}r_{21})-\sigma_{22}r_{11}r_{12}-\sigma_{11}r_{22}r_{21}},
\end{equation*}
and by \eqref{eq:def_index}, $\chi=\lfloor \frac{{d}+\Delta}{2\pi}\rfloor$.
Further, by \eqref{eq:definition_w} and \eqref{eq:Chebyshev_polynomial}, $w(\theta_2)\underset{\theta_2\to\infty}{\sim} C {\theta_2^{\frac{\pi}{\beta}}}$, thus $W(\theta_2)-1 \underset{\theta_2\to\infty}{\sim}  C{\theta_2^{-\frac{\pi}{\beta}}}$. We deduce that
\begin{equation*}
     \phi_1(\theta_2)\underset{\theta_2\to \infty}{\sim} C\theta_2^{-\frac{{d}+\Delta}{2\beta}+\chi\frac{\pi}{\beta}}.
\end{equation*}
Using now \eqref{eq:expression_delta_epsilon} and \eqref{eq:definition_beta} gives that
\begin{equation*}
     \tan (\epsilon+\delta-\beta) =-\tan  \frac{{d}+\Delta}{2}.
\end{equation*}
Combined with the fact that $\frac{{d}+\Delta}{2}\in(-\pi,\pi)$ and $0<\epsilon+\delta-\beta<\pi$ (due to the recurrence condition and $\alpha<1$), this implies
\begin{equation*}
-\frac{{d}+\Delta}{2}=
\begin{cases}
\epsilon+\delta-\beta-\pi & \text{ if } {d}+\Delta>0, \text{ i.e., if } \chi=0,
\\
\epsilon+\delta-\beta & \text{ if } {d}+\Delta<0, \text{ i.e., if } \chi=-1,
\end{cases}
\end{equation*}
which means
\begin{equation*}
     -\frac{{d}+\Delta}{2}=\epsilon+\delta-\beta-\pi-\chi \pi.
\end{equation*}
We deduce that
\begin{equation*}
     \phi_1(\theta_2)\underset{\theta_2\to \infty}{\sim} C\theta_2^{\alpha-1}.\qedhere
\end{equation*}}
\end{proof}

\section{Algebraic nature and simplification of the Laplace transforms}
\label{sec:algebraic_nature}

\subsection*{\textcolor{black}{Motivations}} In this section we are interested in the following question: in which extent is it possible to simplify the expressions of the Laplace transforms given in Theorem \ref{thm:main}? For instance, is this possible that these functions be algebraic or even rational?

This is of paramount importance: first, simplified expressions would lead to an easier analysis, in particular for asymptotic analysis or for taking inverse Laplace transforms; second, understanding the parameters $(\Sigma,\mu, R)$ for which the Laplace transforms are rational should reveal intrinsic structure of the model. In the particular case of the identity covariance matrix $\Sigma$, some attempts of simplifications may be found in \cite[Chapter~4]{foddy_1984}.

In the literature, this question has received much interest in the discrete setting. One can first think at the famous Jackson's networks \cite{Ja-57} and their product form solutions. In a closer context, Latouche and Miyazawa \cite{LaMi-14}, Chen, Boucherie and Goseling \cite{ChBoGo-15}, obtain geometric necessary and sufficient conditions for the stationary distribution of random walks in the quarter plane to be sums of geometric terms. Such criteria can be applied, e.g., to derive an approximation scheme to error bounds for performance measures of random walks in the quarter plane \cite{ChBoGo-16}. 

In our context of reflected diffusions in the quadrant, analogues of these results are obtained by Dieker and Moriarty \cite{DiMo-09}: a simple condition (involving the single angle \eqref{eq:condition_DiMo-09}) for the stationary density to be a sum of exponential terms is derived. We will discuss the links with our results in Section~\ref{subsec:exceptional_product_forms}.
\textcolor{black}{In the simplest case the sum of exponential terms consist of one single term: this is the skew-symmetric condition \eqref{eq:skew-symmetric} of \cite{harrison_multidimensional_1987}. In this case, a factorization such as in Remark \ref{rem:decoupling} exists and applying an invariant method we are able to solve the skew-symmetric case, see our Section \ref{subsec:skew}, yielding new proofs to already known results.}

\textcolor{black}{Another main reason which can lead to simplified expressions comes from the rationality of $\frac{\beta}{\pi}$, with $\beta$ in \eqref{eq:definition_beta}. Before being more precise, let us mention that this reason is deeply different than the first one: $\beta$ only depends on the covariance matrix and is therefore independent of the reflection matrix, while Dieker and Moriarty's condition is also dependent on the reflection angles, see \eqref{eq:condition_DiMo-09}. In the discrete setting, the rationality of $\frac{\beta}{\pi}$ is rather interpreted as a condition on the finiteness of a certain group of transformations, and this finiteness} was shown to have a decisive influence on the D-finiteness (a function is D-finite if it satisfies a linear differential equation with polynomial coefficients on $\mathbb Q$) of the generating functions, see \cite{fayolle_random_1999,BMMi-10,Dreyfus}. For reflected Brownian motion in the quadrant with orthogonal reflections, it is shown in \cite{franceschi_tuttes_2016} that the Laplace transform is algebraic (note, any algebraic function is D-finite) if and only if the group is finite. We present a structural result in Section \ref{subsec:group_product_forms}.


Finally, we focus in Section \ref{sec:orthogonal_reflection} on the case of orthogonal reflections, and derive a new proof of the main result of \cite{franceschi_tuttes_2016}, as a consequence of Theorem \ref{thm:main}.

\subsection{Dieker and Moriarty's criterion}
\label{subsec:exceptional_product_forms}

For the sake of completeness, let us mention the following result:
\begin{thm}[Theorem 1 in \cite{DiMo-09}]
\label{thm:DiMo-09}
The stationary density is a sum of exponentials if and only if
\begin{equation}
\label{eq:condition_DiMo-09}
   \textcolor{black}{ \alpha= \frac{\epsilon+\delta-\pi}{\beta}}
    \in -\mathbb N=-\{0,1,2,\ldots \}, 
\end{equation}
with $\epsilon$ and $\delta$ in $(0,\pi)$ and
\begin{equation}
\label{eq:expression_delta_epsilon}
  \tan   \epsilon=
  \frac{\sin\beta}{\frac{r_{21}}{r_{11}}\sqrt{\frac{\sigma_{11}}{\sigma_{22}}}+\cos\beta},
  \qquad
  \tan   \delta=
  \frac{\sin\beta}{\frac{r_{12}}{r_{22}}\sqrt{\frac{\sigma_{22}}{\sigma_{11}}}+\cos\beta}.
\end{equation}
\end{thm}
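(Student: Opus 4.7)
The plan is to derive this criterion from the Carleman BVP of Proposition \ref{prop:BVP_Carleman} and the explicit formula \eqref{eq:main_formula_with_constants}. The interior density $\pi$ is a finite sum of exponentials if and only if the Laplace transforms $\phi_1$ and $\phi_2$ are rational functions of one variable with simple real poles (the interior transform $\phi$ is then reconstructed as a sum of products via \eqref{eq:functional_equation}). So the task reduces to characterizing those parameters for which the right-hand side of \eqref{eq:main_formula_with_constants} collapses to such a rational function.

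First, I would pass via Corollary \ref{cor:main} to the canonical wedge setting: opening angle $\beta$, identity covariance, and reflection vectors making angles $\epsilon, \delta$ with the two sides of the wedge, which is precisely the geometric content of \eqref{eq:expression_delta_epsilon}. A direct computation using Lemma \ref{lem:delta} then verifies the key angular identity
\[
\delta + \Delta \equiv 2(\pi - \epsilon - \delta) \pmod{2\pi},
\]
so that condition \eqref{eq:condition_DiMo-09} is equivalent to the quantization $\delta + \Delta = 2n\beta$ for some $n \in \mathbb{Z}_{\geq 0}$; in words, $\arg G$ performs an integer number of full $\beta$-winds along $\mathcal{R}^-$.

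For sufficiency, assuming \eqref{eq:condition_DiMo-09}, I would construct an explicit rational decoupling $F$ in the sense of Remark \ref{rem:decoupling}, of the form $F(\theta_2) = \prod_{k=1}^{n}(\theta_2 - a_k)$, with the $a_k$ chosen as the pre-images (under the angular parametrization of $\mathcal{R}$) of $n$ equidistributed resonant points dictated by the quantization; the identity $G(\theta_2) = F(\theta_2)/F(\overline{\theta_2})$ on $\mathcal{R}$ would follow by comparing arguments modulo $2\pi$. By Remark \ref{rem:decoupling}, the Carleman BVP reduces to $(\phi_1 F)(\overline{\theta_2}) = (\phi_1 F)(\theta_2)$, which I would solve by Tutte's invariants as in \cite{franceschi_tuttes_2016}: $\phi_1 F$ extends to a rational function of $w(\theta_2)$, and the polar and growth constraints coming from Proposition \ref{prop:BVP_Carleman} force it to be a polynomial of controlled degree, giving $\phi_1$ rational in $\theta_2$ with $n+1$ simple real poles. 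Conversely, if $\phi_1$ is rational its meromorphic continuation (Proposition \ref{prop:continuation_Laplace_transform}) has no branch cut along $[\theta_2^+,\infty)$; substituting this back into \eqref{eq:main_formula_with_constants} and deforming $\mathcal{R}^-$ to a Hankel contour around $[\theta_2^+,\infty)$, the Cauchy integral reduces to a finite sum of residues only if $\log G$ extends single-valuedly around the cut, forcing $(\delta + \Delta)/(2\pi) \in \mathbb{Z}$ and hence \eqref{eq:condition_DiMo-09}.

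The main obstacle will be the explicit construction of the decoupling $F$ in the sufficient direction, and in particular the verification that $\phi_1 F$ has the correct polar and growth structure to be reconstructed by the invariant method. This amounts to a careful bookkeeping of zeros and poles of the Chebyshev-type factors in $F$ against the boundary behavior of $G$ on $\mathcal{R}$, with the combinatorics dictated by the integer $n$ in \eqref{eq:condition_DiMo-09}.
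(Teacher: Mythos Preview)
The paper does not prove this theorem at all: it is quoted verbatim as Theorem~1 of Dieker and Moriarty \cite{DiMo-09}, introduced with ``For the sake of completeness, let us mention the following result,'' and the only commentary afterward is a remark that the statement has been transported from Dieker--Moriarty's wedge normalization to the quarter-plane setting via the linear transform of Appendix~\ref{app:BM_cones}. So there is no ``paper's own proof'' to compare against; the paper treats the result as external input, and only reproves the special case $n=0$ (skew-symmetry) in Section~\ref{subsec:skew}.

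Your proposal is therefore an attempt to re-derive the full Dieker--Moriarty criterion from the BVP machinery, which is a different and more ambitious goal. As written it has genuine gaps. First, there is a notational collision: the symbol $\delta$ is used both for $\arg H(0)$ from Lemma~\ref{lem:delta} and for the reflection angle in \eqref{eq:expression_delta_epsilon}, and your ``key angular identity'' is stated without disentangling the two. Second, and more seriously, the quantization you claim is internally inconsistent: you first assert that \eqref{eq:condition_DiMo-09} is equivalent to $\delta+\Delta=2n\beta$, but in the converse direction you conclude only that $(\delta+\Delta)/(2\pi)\in\mathbb{Z}$, which by Lemma~\ref{lem:delta} (where $\delta+\Delta\in(-2\pi,2\pi)$) forces $\delta+\Delta=0$ and hence recovers at most the skew-symmetric case $n=0$, not the general criterion. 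The parameter $\beta$ enters the explicit formula \eqref{eq:main_formula_with_constants} through the conformal map $w$, not through the index, so any quantization in $\beta$ must come from the interaction of $G$ with $w$, which your argument does not address. Third, the construction of the decoupling polynomial $F$ for $n\geq 1$ is left entirely schematic; the paper carries this out only for $n=0$ (Section~\ref{subsec:skew}) and explicitly flags the general case as open (see the paragraph following Remark~\ref{rem:decoupling}).
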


Notice that this is not the exact statement of \cite[Theorem 1]{DiMo-09}, as in Dieker and Moriarty's paper, the Brownian motion is assumed to have an identity covariance matrix and evolves in a wedge with an arbitrary opening angle, whereas we consider Brownian motion with arbitrary covariance matrix but in the quarter plane. A simple linear transform, which is made explicit in Appendix \ref{app:BM_cones}, makes both statements equivalent. The expression \eqref{eq:expression_delta_epsilon} of the angles $\epsilon$ and $\delta$ follows from this transformation.

\subsection{Skew-symmetric case}
\label{subsec:skew}

\textcolor{black}{
The skew-symmetric case holds when the matrix condition \eqref{eq:skew-symmetric} is satisfied \cite{harrison_multidimensional_1987}. In dimension two, \eqref{eq:skew-symmetric} can be reduced to the single equation
\begin{equation}
\label{eq:skewconditiondim2}
     2\sigma_{12}=\frac{r_{21}}{r_{11}}\sigma_{11}+\frac{r_{12}}{r_{22}}\sigma_{22}.
\end{equation}
Using the identities in \eqref{eq:definition_angles_delta_epsilon} we easily show that the above condition is equivalent to
\begin{equation*}
    \epsilon+\delta=\pi,
\end{equation*}
which is the case where \textcolor{black}{the quantity $\alpha$} of Dieker and Moriarty's criterion is equal to $0$, see \eqref{eq:condition_DiMo-09}.}

\textcolor{black}{It is known, see \cite{harrison_multidimensional_1987}, that condition \eqref{eq:skew-symmetric} is satisfied if and only if the stationary distribution has a product form, i.e., $\pi(x_1,x_2)=\pi_2(x_1)\pi_1(x_2)$, where the $\pi_i$'s are the marginal densities of $\pi$. Furthermore it implies that the stationary distribution is exponential, meaning that 
\begin{equation}
\label{eq:product_form}
     \pi(x_1,x_2)=\zeta_1\zeta_2e^{-\zeta_1 x_1-\zeta_2 x_2},\quad  \text{with } \small\left(  \begin{array}{l} \zeta_1 \\  \zeta_2  \end{array} \right)=-2 \cdot\text{diag} (\Sigma)^{-1}\cdot\text{diag}(R)\cdot R^{-1}\cdot \mu.
\end{equation}
See for example \cite[\S 10]{HaRe-81}, \cite{harrison_multidimensional_1987} or \cite{DaMi-13} for more details on these results.}

\textcolor{black}{In fact, in the skew-symmetric case, a rational factorization 
such as in the Remark \ref{rem:decoupling} exists. It is then possible to find again, in another way, some already known results.
Indeed if the skew-symmetric condition holds, we shall prove below that for $\theta_2\in\R$,
\begin{equation}
\label{eq:factorization_ss}
     G(\theta_2)=\frac{F(\theta_2)}{F(\overline{\theta_2})},\quad \text{with } F(\theta_2)=\zeta_2 -\theta_2.
\end{equation}
where by \eqref{eq:product_form} $\zeta_2$ takes the value
\begin{equation}
\label{eq:value_alpha_2}
     \zeta_2=\frac{2r_{22}(r_{21}\mu_1-r_{11}\mu_2)}{\sigma_{22}\det R}.
\end{equation}     
}

\textcolor{black}{Using \eqref{eq:skewconditiondim2} and \eqref{eq:expression_p} it is easy to remark that $\zeta_2=p$, which is the only possible pole of $\phi_1$ in $\G$ as we observe after Lemma \ref{lem:continuation}. It follows from Proposition \ref{prop:BVP_Carleman} that the function $\phi_1(\theta_2) (\zeta_2-\theta_2)$
\begin{itemize}
     \item is bounded on $\G$ and converges at infinity to $\nu_1(0)$ (thanks to the initial value theorem and Lemma \ref{lem:continuation}),
     \item is continuous on $\overline{\G}$,
     \item satisfies the boundary condition $\phi_1(\theta_2) (\zeta_2-\theta_2)=\phi_1(\overline{\theta_2}) (\zeta_2-\overline{\theta_2})$ for all $\theta_2\in\R$.
\end{itemize}
Then, using an invariant lemma (see Lemma 2 in \cite[Section 10.2]{litvinchuk_solvability_2000}), we conclude that for some constant $C$, 
\begin{equation*}
     \phi_1(\theta_2)=\frac{C}{\zeta_2 -\theta_2}.
\end{equation*}     
Evaluating the above equation at $0$ and using Lemma \ref{lem:value_at_0} gives $C=\frac{\sigma_{11}}{2r_{11}}\zeta_1\zeta_2$.
Inverting the Laplace transform implies that the stationary distribution is exponential. Then using the functional equation \eqref{eq:functional_equation}, we find that $\frac{\sigma_{11}\sigma_{22}}{4r_{11}r_{22}}\zeta_1\zeta_2 \phi(\theta_1,\theta_2)=\phi_1(\theta_2)\phi_2(\theta_1)$, which means that $\pi$ has a product form.}
\textcolor{black}{\begin{proof}[Proof of Equation \eqref{eq:factorization_ss}]
Let $\theta_2\in\mathcal{R}$ and note $\theta_1=\Theta_1^-(\theta_2)$. Elementary computations give 
\begin{equation*}
     \gamma_1(\theta_1,\theta_2)\gamma_2(\theta_1,\overline{\theta_2})=\theta_1\left(\frac{2 r_{22}}{\sigma_{22}}(r_{21}\mu_1-r_{11}\mu_2)-\theta_2\det R\right). 
\end{equation*}
To find this, we just have to use the skew-symmetric condition \eqref{eq:skewconditiondim2} and to remark that for $\theta_2\in\R$ Vieta's formulas give $\theta_2 \overline{\theta_2}=\frac{1}{\sigma_22}(\sigma_{11}\theta_1^2+2\mu_1\theta_1)$ and $\overline{\theta_2}=-\theta_2 -\frac{1}{\sigma_{22}}(2\sigma_{12}\theta_1+2\mu_2)$. 
Then, using the expression \eqref{eq:value_alpha_2} of $\zeta_2$ we find $G(\theta_2)=\frac{\zeta_2-\theta_2}{\zeta_2-\overline{\theta_2}}$ for $\theta_2\in\R$.
\end{proof}
}

\subsection{Structural form of the Laplace transforms}
\label{subsec:group_product_forms}

In the case $\frac{\beta}{\pi}\in\mathbb Q$, and in this case only, the function $W$ in \eqref{eq:definition_W} is algebraic (as the generalized Chebyshev polynomial \eqref{eq:definition_w} is, see Remark \ref{eq:algebraic_nature_w_W}), yielding the following structural result:

\begin{prop}
\label{prop:structural_group_finite}
If $\frac{\beta}{\pi}\in\mathbb Q$, the Laplace transform $\phi_1$ of Theorem \ref{thm:main} is the product of an algebraic function by the exponential of a D-finite function.
\end{prop}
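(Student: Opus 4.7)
The plan is to split the formula \eqref{eq:main_formula_with_constants} as $\phi_1 = A \cdot \exp(B)$, where
\begin{equation*}
A(\theta_2) = \nu_1(\mathbb{R}_+) \left(\frac{w(0)-w(p)}{w(\theta_2)-w(p)}\right)^{-\chi}, \qquad B(\theta_2) = \frac{1}{2i\pi}\int_{\R^-} \log G(\theta)\left[\frac{w'(\theta)}{w(\theta)-w(\theta_2)} - \frac{w'(\theta)}{w(\theta)-w(0)}\right]d\theta,
\end{equation*}
and to show separately that $A$ is algebraic while $B$ is D-finite.

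By Remark \ref{eq:algebraic_nature_w_W}, the hypothesis $\beta/\pi \in \mathbb{Q}$ implies that $w$ is algebraic. Since $\chi \in \{0,-1\}$ and $A$ is a rational expression in $w(\theta_2)$, it is then algebraic, which settles the first factor.

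For the exponent $B$, the strategy is to show that $B'(\theta_2)$ is D-finite and to invoke the closure of D-finite functions under primitives. First I would differentiate under the integral sign, then apply integration by parts in $\theta$ using the identity $\frac{w'(\theta)}{(w(\theta)-w(\theta_2))^2} = -\frac{d}{d\theta}\frac{1}{w(\theta)-w(\theta_2)}$, yielding
\begin{equation*}
B'(\theta_2) = \frac{w'(\theta_2)}{2i\pi}\left\{\left[\frac{-\log G(\theta)}{w(\theta)-w(\theta_2)}\right]_{\partial \R^-} + \int_{\R^-} \frac{G'(\theta)/G(\theta)}{w(\theta)-w(\theta_2)}\, d\theta\right\}.
\end{equation*}
The boundary contributions at the vertex and at infinity of $\R^-$ can be evaluated using Lemma \ref{lem:delta} (which controls the limits of $\log G$), and are algebraic in $w(\theta_2)$. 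Since $G$ is a rational function of $\theta$ and of the algebraic function $\Theta_1^-(\theta)$, the logarithmic derivative $G'/G$ is algebraic in $\theta$. The change of variable $u = w(\theta)$ then converts the remaining integral into a Cauchy-type transform $\int_{w(\R^-)} \tilde r(u)/(u-w(\theta_2))\, du$ of an algebraic function $\tilde r$.

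The main obstacle lies in the last step: justifying that such a Cauchy transform of an algebraic function along a fixed contour is D-finite in its parameter. This is essentially a classical fact in the theory of periods and Picard--Fuchs equations (these integrals satisfy linear ODEs with polynomial coefficients in the parameter), but its rigorous application here requires some care because of the non-compactness of the contour $w(\R^-)$ and the behavior of $\tilde r$ at the endpoints. Once this D-finiteness is in hand, composition with the algebraic function $w(\theta_2)$ preserves D-finiteness, so $B'$ is D-finite, and $B$ is D-finite as a primitive of a D-finite function, concluding the proof.
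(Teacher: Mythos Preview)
Your decomposition $\phi_1 = A\cdot\exp(B)$ and the argument that $A$ is algebraic are exactly the paper's approach, and the key fact you ultimately need---that a Cauchy-type integral of a D-finite (in particular algebraic) function is D-finite in its parameter---is precisely what the paper cites (to Takayama) as its one-line proof. Your route is correct but takes an unnecessary detour: since $G$ is algebraic in $\theta$, $\log G$ is already D-finite, so you can apply the Cauchy-integral closure property directly to $B$ (after the substitution $u=w(\theta)$) without first differentiating and integrating by parts to eliminate the logarithm.
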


\begin{proof}
This easily follows from the fact that the Cauchy integral of a D-finite function is D-finite, see, e.g., \cite{Ta-92}.
\end{proof}
However, it is not true in general that the exponential of a D-finite function is still D-finite.

\subsection{Orthogonal reflections}
\label{sec:orthogonal_reflection}

Here we consider the case of orthogonal reflections, which is equivalent for the reflection matrix $R$ in \eqref{eq:RBMQP} to be the identity matrix; see also Figure \ref{fig:drift_reflection}. By developing the theory of Tutte's invariants (introduced by Tutte in \cite{Tutte-95} for the enumeration of properly colored triangulations, and used in \cite{bernardi_counting_2015} for the enumeration of quadrant walks) for the Brownian motion, we proved in \cite{franceschi_tuttes_2016} the following result:
\begin{thm}[Theorem 1 in \cite{franceschi_tuttes_2016}]
\label{thm:Tutte_ESAIM}
Let $R$ be the identity matrix in~\eqref{eq:RBMQP}. The Laplace transform $\phi_1$ is equal to 
\begin{equation} 
\label{eq:worth}
     \phi_1(\theta_2)= \frac{-\mu_1 w'(0)}{w(\theta_2)-w(0)} \theta_2.
\end{equation}
\end{thm}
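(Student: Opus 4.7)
The starting point of my proof would be to specialize the quantities of Theorem \ref{thm:main} to $R=\mathrm{Id}$. Then $\gamma_1(\theta)=\theta_1$ and $\gamma_2(\theta)=\theta_2$, and since $\Theta_1^-(\overline{\theta_2})=\Theta_1^-(\theta_2)$ on $\R$, the function $G$ of \eqref{eq:definition_G} collapses to $G(\theta_2)=\overline{\theta_2}/\theta_2$. One should also note $\nu_1(\mathbb R_+)=-\mu_1$ from Lemma \ref{lem:value_at_0}, and that $\gamma_1(\theta_1^-,\Theta_2^\pm(\theta_1^-))=\theta_1^-<0$, so that the index is $\chi=0$ and Proposition \ref{prop:BVP_Carleman} ensures $\phi_1$ is holomorphic on $\overline{\G}$. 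Crucially, this $G$ admits the rational factorization $G=F/\overline{F}$ with $F(\theta_2)=1/\theta_2$, which places us in the favorable situation of Remark \ref{rem:decoupling}.

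Rather than evaluating the Cauchy integral in \eqref{eq:main_formula_with_constants} head-on, I would shortcut it by redoing the BVP analysis of Section \ref{sec:Riemann} for the simpler boundary problem produced by the factorization. Set $h(\theta_2):=\phi_1(\theta_2)/\theta_2$. The boundary condition \eqref{eq:bound_cond_gen} then reads simply as the invariance $h(\theta_2)=h(\overline{\theta_2})$ for $\theta_2\in\R$. From Proposition \ref{prop:BVP_Carleman}, $h$ is holomorphic on $\overline{\G}\setminus\{0\}$ with a single simple pole at $0$ of residue $\phi_1(0)=-\mu_1$, and tends to $0$ at infinity since $\phi_1$ is bounded there while $|\theta_2|\to\infty$.

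Now transport $h$ to the $w$-plane via the conformal map of Lemma \ref{lem:conformal_gluing}. The invariance of $h$ on $\R$ together with the gluing property $w(\theta_2)=w(\overline{\theta_2})$ guarantees that the two boundary values of $h\circ w^{-1}$ on the two sides of the cut $(-\infty,-1]$ coincide, so by a Schwarz-reflection argument $h\circ w^{-1}$ extends meromorphically to the whole of $\mathbb C$. Its only singularity is a simple pole at $w(0)$ with residue $-\mu_1\,w'(0)$, computed from the local expansion $w^{-1}(z)=(z-w(0))/w'(0)+O((z-w(0))^2)$, and it vanishes at infinity. Liouville's theorem then forces
\[
h\circ w^{-1}(z)=\frac{-\mu_1\,w'(0)}{z-w(0)},
\]
and substituting $z=w(\theta_2)$ and multiplying by $\theta_2$ yields \eqref{eq:worth}. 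The main subtle point in executing this plan is justifying the meromorphic extension across the cut $(-\infty,-1]$; this reduces to a standard Schwarz-reflection argument, using the continuity of $h$ on $\overline{\G}\setminus\{0\}$ that is provided by Proposition \ref{prop:BVP_Carleman}\ref{item:BVPCarleman_2}. Everything else is routine verification.
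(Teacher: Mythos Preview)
Your argument is correct, but it takes a different route from the paper's proof. The paper's purpose in this section is to derive \eqref{eq:worth} \emph{as a consequence of Theorem~\ref{thm:main}}: it starts from the integral representation \eqref{eq:Wmainformula} with $\chi=0$, rewrites $\log G(\theta)=\log\overline{\theta}-\log\theta$ so that the $\R^-$-integral becomes an integral over the full hyperbola $\R$, and then evaluates this integral explicitly by a residue computation on a keyhole contour around the negative real axis (Figure~\ref{fig:integration_contour_Tutte}). The residues at $\theta_2$ and at $q$ and the jump of $\log$ across $(-\infty,0)$ combine to give $\phi_1(\theta_2)=C'\,\theta_2/(w(\theta_2)-w(0))$, and $\phi_1(0)=-\mu_1$ fixes the constant.

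You instead bypass the Cauchy integral entirely and re-run the BVP argument on the decoupled function $h=\phi_1/\theta_2$, invoking the conformal gluing of $w$ and Liouville's theorem. This is precisely the Tutte-invariant method alluded to in Remark~\ref{rem:decoupling} and carried out for the skew-symmetric case in Section~\ref{subsec:skew}; it is also essentially the original proof in \cite{franceschi_tuttes_2016}. Your route is shorter and more elementary, but it does not exhibit \eqref{eq:worth} as a specialization of the general formula \eqref{eq:main_formula_with_constants}, which is the paper's point here: to show that the integral in Theorem~\ref{thm:main} can be computed in closed form when a rational factorization of $G$ exists. One minor wording issue: the extension of $h\circ w^{-1}$ across $(-\infty,-1]$ is not Schwarz reflection per se but the edge-of-the-wedge/Morera argument for matching boundary values; the continuity you cite is exactly what is needed.
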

In this section we derive a new proof of this result, as a consequence of Theorem \ref{thm:main}. More generally, the proof below would work for any parameters $(\Sigma,\mu,R)$ such that $G(\theta_2)=\frac{F(\theta_2)}{F(\overline{\theta_2})}$ (cf.\ Remark \ref{rem:decoupling}), yielding a rational expression of $\phi_1(\theta_2)$ in terms of $w(\theta_2)$ and $\theta_2$.

\begin{proof}
Let us first notice that the index $\chi=0$. Indeed, since $G(\theta_2)=\frac{\overline{\theta_2}}{\theta_2}$, we have $\arg G(\theta_2) =-2\arg \theta_2 >0$ for $\theta_2\in \R$, and thus $\Delta>0$, see the proof of Lemma \ref{lem:index}.

Starting from the formula \eqref{eq:Wmainformula}, we have for some constant $C$
\begin{equation*}
\phi_1(\theta_2) =C\exp\bigg\{{\frac{1}{2i\pi} \int_{\R^-} (\log\overline{\theta}-\log\theta)\frac{W'(\theta)}{W(\theta)-W(\theta_2)} \mathrm{d}\theta}\bigg\}
=C\exp\bigg\{{\frac{1}{2i\pi} \int_{\R} \log\theta\frac{W'(\theta)}{W(\theta)-W(\theta_2)} \mathrm{d}\theta}\bigg\}.
\end{equation*}
To compute the above integral, we first integrate on the contour represented on Figure \ref{fig:integration_contour_Tutte}. The residue theorem gives
\begin{equation} 
\label{eq:residue}
      \frac{1}{2i\pi}\bigg\{ \int_{\R_R}+\int_{\mathcal{C}_R}+\int_{\mathcal{C}_\epsilon}+\int_{-R+i\epsilon}^{i\epsilon}+\int_{-i\epsilon}^{-R-i\epsilon}  \bigg\}\log \theta
\frac{W'(\theta)}{W(\theta)-W(\theta_2)} \mathrm{d}\theta =\log \theta_2 -\log q.
\end{equation}
\begin{figure}[t!]
\vspace{-5mm}
\centering
\includegraphics[scale=0.85]{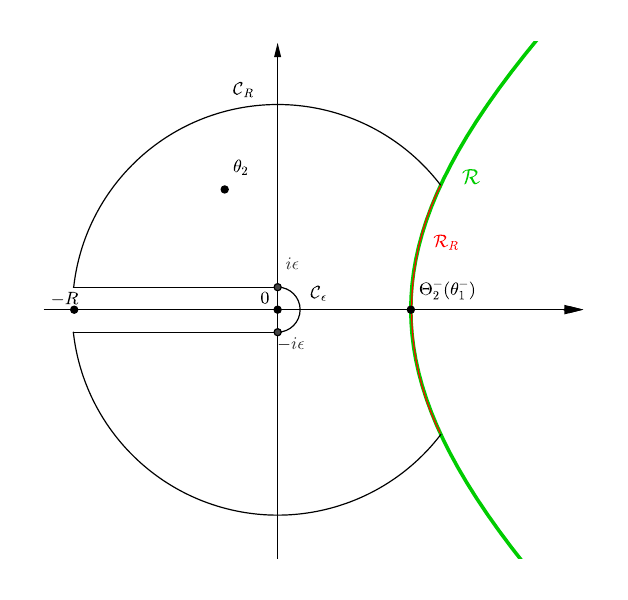}
\vspace{-5mm}
\caption{Integration contour used in the proof of Theorem \ref{thm:Tutte_ESAIM}}
\label{fig:integration_contour_Tutte}
\end{figure}
It is easy to see that 
\begin{equation*}
     \lim_{R\to\infty}\frac{1}{2i\pi}\int_{\R_R} \log \theta\frac{W'(\theta)}{W(\theta)-W(\theta_2)} \mathrm{d}\theta=\frac{1}{2i\pi}\int_\R \log \theta\frac{W'(\theta)}{W(\theta)-W(\theta_2)}\mathrm{d}\theta
\end{equation*}
and that in the limits when $\epsilon\to0$ and $R\to\infty$, the contributions on $\mathcal{C}_\epsilon$ and $\mathcal{C}_R$ both converge to $0$, because $W$ is analytic at $0$ and $\infty$, respectively. 

Furthermore,
\begin{align*}
\lim_{\epsilon\to 0} \lim_{R\to\infty}\frac{1}{2i\pi}
\bigg\{ \int_{-R+i\epsilon}^{i\epsilon}+\int_{-i\epsilon}^{-R-i\epsilon} \bigg\}&
\log \theta\frac{W'(\theta)}{W(\theta)-W(\theta_2)} \mathrm{d}\theta\\
&=\lim_{\epsilon\to 0} \frac{1}{2i\pi}\int_{-\infty}^0
(\log (t+i\epsilon)-\log (t-i\epsilon))
\frac{W'(\theta)}{W(\theta)-W(\theta_2)} \mathrm{d}\theta
\\ &=
\frac{1}{2i\pi}\int_{-\infty}^0
2i\pi\frac{W'(\theta)}{W(\theta)-W(\theta_2)} \mathrm{d}\theta
\\ &= \log \frac{W(0)-W(\theta_2)}{W(\infty)-W(\theta_2)}
\\&= \log \frac{w(\theta_2)-w(0)}{w(q)-w(0)}.
\end{align*}
Above we have used the dominated convergence and the fact that the principal determination of the logarithm gives us $\lim_{\epsilon\to 0} (\log (t+i\epsilon)-\log (t-i\epsilon)) = 2i\pi$.
Letting $R\to \infty$ and then $\epsilon\to 0$ in \eqref{eq:residue}, we have, for some constants $C$ and $C'$,
\begin{equation*}
     \phi_1(\theta_2)=C\exp\bigg\{\frac{1}{2i\pi} \int_\R \log\theta \frac{W'(\theta)}{W(\theta)-W(\theta_2)} \mathrm{d}\theta \bigg\}= C' \frac{\theta_2}{w(\theta_2)-w(0)}.
\end{equation*}
Since by Lemma \ref{lem:value_at_0} one has $\phi_1(0)=-\mu_1$, this eventually gives the right constant in \eqref{eq:worth}. 
\end{proof}

%

\appendix

\section{Equivalence between Brownian motion in wedges and Brownian motion in the quarter plane}
\label{app:BM_cones}

We use the notation of Section \ref{sec:introduction}. Up to an isomorphism, studying Brownian motion in the quarter plane with arbitrary covariance matrix $\Sigma$ is equivalent to studying Brownian motion in a cone of angle $\beta = \arccos-\frac{\sigma_{12}}{\sqrt{\sigma_{11}\sigma_{22}}}$, with covariance identity. \textcolor{black}{See for example \cite[Equation (23)]{AsIaMe-96} and \cite[Lemma 3.23]{Sa-15}.} In this short section we relate the key parameters (angles of the reflection vectors and drift) before and after the linear transformation.

Let us define the linear transforms
\begin{equation}
\label{eq:linear_transforms_T}
     T= 
\begin{pmatrix}
   \frac{1}{\sin \beta}& \cot \beta \\
   0 & 1
\end{pmatrix}
\begin{pmatrix}
   \frac{1}{\sqrt{\sigma_{11}}} & 0 \\
   0 &\frac{1}{\sqrt{\sigma_{22}}}
\end{pmatrix},\qquad
T^{-1}= 
\begin{pmatrix}
  \sqrt{\sigma_{11}} & 0 \\
   0 &\sqrt{\sigma_{22}}
\end{pmatrix}
\begin{pmatrix}
   \sin \beta & -{\cos \beta} \\
   0 & \phantom{-}1
\end{pmatrix}.
\end{equation}

 \begin{figure}[hbtp]
 \vspace{-5mm}
 \centering
 \includegraphics[scale=1]{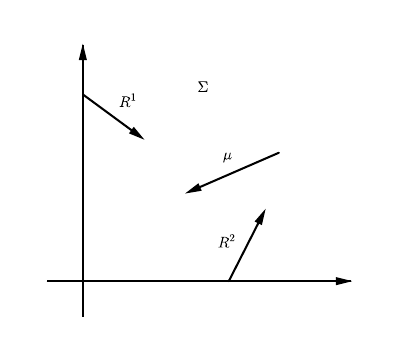}
  \includegraphics[scale=1]{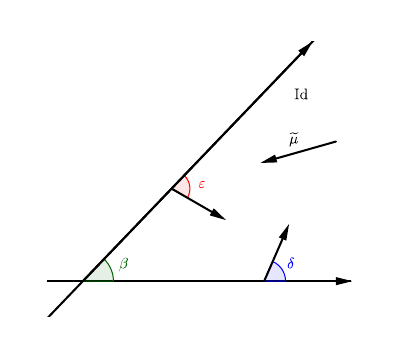}
  \vspace{-5mm}
 \caption{The linear transformation $T$ in \eqref{eq:linear_transforms_T} from the quadrant to the wedge of opening angle $\beta$}
 \label{fig:linear_transformation}
 \end{figure}
 
Obviously the reflected Brownian motion associated to $(\Sigma,\mu, R)$ becomes a Brownian motion (with covariance identity) in a wedge of angle $\beta$ and with parameters $(\text{Id},T\mu,TR)$. The new angles of reflection are $\delta$ and $\epsilon$ (cf.\ Figure \ref{fig:linear_transformation}), such that
\begin{equation}
\label{eq:definition_angles_delta_epsilon}
\left\{\begin{array}{lll}
\tan \delta =\dfrac{\sin \beta}{a +\cos \beta},  &\quad
\cos \delta = \dfrac{a+\cos \beta}{\sqrt{a^2 +2a\cos \beta+1}}, &\quad \sin \delta =  \dfrac{\sin \beta}{\sqrt{a^2 +2a\cos \beta+1}},\smallskip\smallskip\\
\tan \epsilon =\dfrac{\sin \beta}{b +\cos \beta}, &\quad
\cos \epsilon = \dfrac{b+\cos \beta}{\sqrt{b^2 +2b\cos \beta+1}}, &\quad \sin \epsilon =  \dfrac{\sin \beta}{\sqrt{b^2 +2b\cos \beta+1}},
\end{array}\right.
\end{equation}
where $a=\frac{r_{12}}{r_{22}} \sqrt{\frac{\sigma_{22}}{\sigma_{11}}}$ and $b=\frac{r_{21}}{r_{11}} \sqrt{\frac{\sigma_{11}}{\sigma_{22}}}$.
The new drift is $\widetilde{\mu}=T\mu$, where
\begin{equation*}
\widetilde{\mu}_1= \frac{\mu_1}{\sqrt{\sigma_{11}}}\frac{1}{\sin \beta}+\frac{\mu_2}{\sqrt{\sigma_{22}}} \cot \beta
\quad
\text{ and }
\quad
\widetilde{\mu}_2= \frac{\mu_2}{\sqrt{\sigma_{22}}}. 
\end{equation*}

\textcolor{black}{To conclude the appendix, we prove Corollary \ref{cor:main}, which gives an explicit formula for the Laplace transform of the stationary distribution of a reflected Brownian motion in a wedge.
\begin{proof}[Proof of Corollary \ref{cor:main}]
Let us arbitrarily choose $\sigma_{11}=\sigma_{22}=1$ and $\sigma_{12}=-\cos \beta$. It implies that the linear transform $T$ in \eqref{eq:linear_transforms_T} is the same as $T_1$ in the statement of Corollary \ref{cor:main}. We
consider the process
$Z=T^{-1}\widetilde{Z}$, which is a reflected Brownian motion in the quadrant of parameters $(\Sigma,\mu,R)$, with
\begin{equation*}
      \Sigma=T^{-1}\widetilde{\Sigma}(T^{-1})^\top,\quad \mu=T^{-1}\widetilde{\mu}\quad \text{and}\quad R=T^{-1}\widetilde{R}.
\end{equation*}     
Let $\Pi$ (resp.\ $\widetilde \Pi$) be the invariant measure of $Z$ (resp.\ $\widetilde Z$) and $\pi$ (resp.\ $\widetilde \pi$) its density. It is easy to notice that $$\Pi = \widetilde \Pi \circ T\quad \text{and}\quad\pi=\vert\det T\vert\widetilde{\pi} \circ T.$$
Indeed, by a fundamental property of the invariant measure,
for all $x\in\mathbb{R}_+^2$ and all measurable set $A$ in $\mathbb{R}_+^2$ we have the following limits
$$
\mathbb P_x [Z_t\in A] \underset{t\to\infty}{\longrightarrow} \Pi (A),\qquad
\mathbb P_x [Z_t\in A]=\mathbb P_x[T Z_t\in T A]=\mathbb P_{Tx} [\widetilde{Z}_t \in T A]\underset{t\to\infty}{\longrightarrow} \widetilde{\Pi} (TA).
$$
It yields $\Pi = \widetilde \Pi \circ T$. Furthermore by a simple change of variable we have
$$
\widetilde \Pi \circ T (A)
=\int_{TA}\widetilde{\pi}(\widetilde x)\mathrm{d} \widetilde x = |\det T|\int_A \widetilde{\pi} (Tx) \mathrm{d}x,
$$
therefore $|\det T|\widetilde{\pi} \circ T$ is the density of $\Pi \circ T$ and is then equal to $\pi$.
Theorem \ref{thm:main} gives the value of $\phi$, the Laplace transform of $\pi$. Lastly, a simple change of variable $\widetilde{x}=T x$ in Equation \eqref{eq:Laplace_transform_interior} leads to 
\begin{equation*}
\widetilde{\phi}(\widetilde{\theta}) =
\iint_{\mathcal{C}_\beta} \exp (\widetilde{\theta} \cdot \widetilde{x}) \widetilde{\pi} (\widetilde{x}) \mathrm{d} \widetilde{x}
=
\iint_{T^{-1} \mathcal{C}_\beta} \exp (\widetilde{\theta} \cdot T x) \widetilde{\pi} (T x) \vert\det T \vert \mathrm{d} x
=
\iint_{\mathbb{R}_+^2} \exp (T^\top \widetilde{\theta} \cdot x) \pi(x) \mathrm{d} x
 = \phi(T^\top \widetilde{\theta}),
\end{equation*}
where $\mathcal{C}_\beta$ is the wedge of angle $\beta$ where the process evolves.
\end{proof}}

\bibliographystyle{apalike} 

\bibliography{biblio1}

\end{document}